\documentclass[12pt,twoside]{amsart}

\usepackage{r-English}
\usepackage[
    letterpaper,
    left=1in,
    right=1in,
    top=1.1in,
    bottom=1.1in,
    includehead,
    headheight=14pt,
    headsep=18pt
]{geometry}
\usepackage{amsmath}
\usepackage{amssymb}

\theoremstyle{plain}
\newtheorem{introthm}{Theorem}

\title[Stationary states for an inner action]
{Stationary states on a $C^*$-algebra for an inner action}
 
\author{Gal Ben Ayun}
\date{}
\begin{document}
\title{Stationary states on a $C^*$-algebra for an inner action}
\date{}
\maketitle

\begin{abstract}
We study stationary states for actions of countable discrete groups on unital separable
$C^*$-algebras. We prove that the stationary state space associated with an inner action
of a subgroup of the unitary group that generates the algebra is a Choquet simplex. We
also give a locality criterion covering Bernoulli shifts. The simplex structure yields a
canonical decomposition of stationary states into tracial and purely nontracial parts.
For inner actions, we characterize extreme stationary states by factoriality of their GNS
von Neumann algebras. We further show that a stationary state is tracial if and only if
its GNS von Neumann algebra is finite, and that it is purely nontracial if and only if this
algebra is of type~$\mathrm{III}$. As applications, for $2\leq d\leq\infty$ the stationary state simplex of
$C^*(\mathbb{F}_d)$ has a Poulsen face, while for a
nontrivial property~$(T)$ group the stationary state simplex of $C^*(\Gamma)$ is not
Poulsen. Finally, we give a sufficient spectral-gap condition for $S_{\mu}(A)$ to be a Bauer simplex and construct a family $(A_d,\Gamma_d,\mu_d)$ satisfying this condition.
\end{abstract}
\section{Introduction}
It is a classical fact that the tracial state space $T(A)$ of a unital $C^*$-algebra
$A$ is a possibly empty Choquet simplex \cite{blackadar2024tracial}. Moreover, a trace
is extreme precisely when its GNS von Neumann algebra is a factor. Thus traces admit a
unique ergodic decomposition whose pieces can be recognized from their GNS
representations.
Let a countable discrete group $\Gamma$ act on $A$, and let $\mu$ be a generating
probability measure on $\Gamma$. Recall that a state $\varphi$ is $\mu$-stationary if
\[
    \varphi=\sum_{g\in\Gamma}\mu(g)(g\mathbin{\cdot}\varphi).
\]
Stationary states are the analogues of stationary measures for the random walk driven
by $\mu$. They form a nonempty weak$^*$ compact convex set, denoted by $S_\mu(A)$,
even when invariant states do not exist. Stationary $C^*$-dynamical systems were
studied systematically by Hartman and Kalantar \cite{hartman2023stationary}.
Suppose now that \mbox{$\Gamma\leq U(A)$} acts by conjugation and generates $A$ as a
$C^*$-algebra. The $\Gamma$-invariant states are then exactly the tracial states, so
\[
    T(A)=S^\Gamma(A)\subset S_\mu(A)\subset S(A).
\]
It is therefore natural to ask whether the simplex structure of $T(A)$
survives on the larger space $S_\mu(A)$.
For $A=C^*(\Gamma)$, the same question can be phrased entirely in terms of functions
on $\Gamma$. States correspond to normalized positive-definite functions, traces to
the conjugation-invariant ones, and stationary states to those satisfying
\[
    \varphi(h)
    =\sum_{g\in\Gamma}\mu(g)\varphi(g^{-1}hg),
    \qquad h\in\Gamma.
\]
In other words, $S_\mu(C^*(\Gamma))$ consists of the normalized positive-definite
functions which are harmonic for the conjugation random walk. The question was posed to the author by Hartman and Vigdorovich, motivated by forthcoming work on character stiffness for certain classes of amenable groups, in which an affirmative answer plays a crucial role \cite{hartman-vigdorovich-prep}.
The answer to the
question above is affirmative.

\begin{introthm}[Theorem~\ref{thm:stationaries.form.simplex}]
Let $A$ be a unital separable $C^*$-algebra, let \mbox{$\Gamma\leq U(A)$} be a countable
subgroup that generates $A$ as a $C^*$-algebra and acts on $A$ by conjugation, and let
\mbox{$\mu\in\prob(\Gamma)$} be a generating measure. Then $S_\mu(A)$ is a Choquet
simplex.
\end{introthm}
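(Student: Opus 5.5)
Since $S_\mu(A)$ is a nonempty weak$^*$ compact convex subset of a locally convex space, and metrizable because $A$ is separable, it is a Choquet simplex exactly when the cone $C=\mathbb{R}_{\ge0}S_\mu(A)$ of positive $\mu$-stationary functionals is a lattice in its own order. Concretely, it suffices to show that any two positive stationary functionals $\varphi,\psi$ have a greatest lower bound in $C$. The plan is to mimic the tracial proof, where the relevant tool is the Radon--Nikodym correspondence: for $\omega\le\varphi$ there is a unique $T$ in the commutant $\pi_\varphi(A)'$ with $0\le T\le 1$ and $\omega(a)=\langle \pi_\varphi(a)T\xi_\varphi,\xi_\varphi\rangle$.

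\textbf{Step 1: reduce stationarity to a harmonicity condition.} Because $\Gamma\le U(A)$ acts by conjugation, $g\cdot\omega(a)=\omega(g^{-1}ag)$. For $\omega_T$ as above, $g\cdot\omega_T(a)=\langle \pi(a)T\pi(g)\xi,\pi(g)\xi\rangle$. Put $\varphi$ stationary and $\xi=\xi_\varphi$. The stationarity of $\varphi$ says that the vector state is $\mu$-averaged invariant. I would introduce the normal unital completely positive map
\[
P(x)=\sum_g\mu(g)\,J_g^* x J_g
\]
on $\pi_\varphi(A)'$. Here one must check that $g\cdot\varphi\le\mu(g)^{-1}\varphi$ for $g\in\operatorname{supp}\mu$, so that $g\cdot\varphi$ is dominated by $\varphi$ and has its own Radon--Nikodym derivative. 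That yields an isometry-like intertwiner $J_g$ on $H_\varphi$ sending $\pi(a)\xi\mapsto\pi(a)\pi(g)\xi$ scaled appropriately, which commutes with $\pi(A)$ up to the derivative. The goal is the equivalence
\[
\omega_T\ \text{is stationary} \iff P(T)=T .
\]
In other words, the positive stationary functionals below $\varphi$ correspond affinely and order-isomorphically to the fixed points of $P$ in $[0,1]\subset\pi_\varphi(A)'$.

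\textbf{Step 2: the fixed-point space is a von Neumann algebra (main obstacle).} Fixed points of a normal UCP map form an operator system that need not be an algebra. Fortunately, $P$ preserves the faithful normal state $x\mapsto\langle x\xi,\xi\rangle$ on $\pi_\varphi(A)'$, since $\xi$ is separating for the commutant. Under these conditions the standard result (Kovács--Szűcs type, or the Choi-multiplicative-domain argument) should show that $\mathrm{Fix}(P)$ is a von Neumann subalgebra. The key estimate is
\[
P(x^*x)-P(x)^*P(x)\ge0
\]
combined with faithfulness of the invariant state. Applied to a fixed point $x$, this gives $P(x^*x)=x^*x$, so the fixed points lie in the multiplicative domain.

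Making this rigorous requires the domination $g\cdot\varphi\le C\varphi$, which I expect to extract from stationarity and positivity of $\mu$ on generators. For $g$ with $\mu(g)>0$ we have $\varphi\ge\mu(g)\,g\cdot\varphi$ directly. Since $\mu$ is generating, the semigroup generated by $\operatorname{supp}\mu$ suffices after iterating the convolution powers $\mu^{*n}$.

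\textbf{Step 3: conclude the lattice property.} Given $\varphi,\psi$, apply Steps 1--2 to $\rho=\varphi+\psi$. Then $\varphi$ and $\psi$ correspond to commuting elements $S$ and $1-S$ of the abelian-or-not von Neumann algebra $\mathrm{Fix}(P)$. The order interval $[0,\rho]\cap C$ is affinely order-isomorphic to $[0,1]$ of $\mathrm{Fix}(P)$. Consequently the order on it is the restriction of the operator order only when comparable elements commute; more precisely, $\mathrm{Fix}(P)$ must be abelian.

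To obtain abelianness, I would use that $\mathrm{Fix}(P)\subset\pi(A)'$ and that its elements commute with $\pi(\Gamma)$ in the averaged sense, placing them in the center. The tracial analogue is exactly that $\pi_\tau(A)'\cap\{\text{invariant}\}$ equals the center. Once $\mathrm{Fix}(P)$ is abelian, the interval is a lattice, so $\varphi\wedge\psi=\omega_{S\wedge(1-S)}$ exists, and $S_\mu(A)$ is a Choquet simplex.
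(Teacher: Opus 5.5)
Your Steps 1 and 2 are sound, and the lattice-cone strategy is viable. Put $J_g\colon\pi_\rho(a)\xi_\rho\mapsto\pi_\rho(a)U_g\xi_\rho$. These are bounded operators in $\pi_\rho(A)'$, and no rescaling is needed since $\mu(g)\,g\cdot\rho\le\rho$. Then $P=\sum_g\mu(g)J_g^*(\cdot)J_g$ is a normal unital completely positive map on $\pi_\rho(A)'$ preserving the faithful vector state, and $\omega_T$ is stationary iff $P(T)=T$. The map $T\mapsto\omega_T$ is an order isomorphism with no commutativity caveat, contrary to your remark, and $\mathrm{Fix}(P)$ is a von Neumann algebra.

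But the entire content of the theorem sits in Step 3, and there you give no argument. Elements of $\mathrm{Fix}(P)$ lie in $\pi_\rho(A)'$ by construction, so they commute with $\pi_\rho(\Gamma)$ exactly, not just ``in the averaged sense''. That says nothing about membership in $M_\rho=\pi_\rho(A)''$, which is what centrality requires.

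A quick check shows the sketch cannot be right as stated. Steps 1--2 go through verbatim for $\mu=\delta_e$, where $P=\mathrm{id}$ and $\mathrm{Fix}(P)=\pi_\rho(A)'$ is typically nonabelian (the paper's $M_2(\mathbb{C})$ examples). So the hypotheses that $\mu$ is generating and that $\Gamma$ generates $A$ must enter precisely at the step you skip.

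The tracial analogy also breaks down. There the invariance is under right-multiplication \emph{unitaries}, which generate $\pi_\tau(A)'$. Here the multiplicative-domain identity $\sum_g\mu(g)(xJ_g-J_gx)^*(xJ_g-J_gx)=P(x^*x)-P(x^*)x-x^*P(x)+x^*x=0$ for $x\in\mathrm{Fix}(P)$ does give $xJ_g=J_gx$ for all $g$, using $J_gJ_h=J_{hg}$ and semigroup generation. To conclude $x\in Z(M_\rho)$, however, you would still need $\{J_g\}''=\pi_\rho(A)'$ for these non-unitary operators, which is not evident.

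One way to close the gap is to move to the $M_\rho$ side, as in the paper's Section 4.

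\begin{itemize}
\item One checks that $\langle T_\rho(x)x'\xi_\rho,\xi_\rho\rangle=\langle xP(x')\xi_\rho,\xi_\rho\rangle$ for $x\in M_\rho$ and $x'\in\pi_\rho(A)'$. Hence each $0\le x'\in\mathrm{Fix}(P)$ yields a normal $T_\rho$-invariant functional $\omega_{x'}=\langle\,\cdot\,x'\xi_\rho,\xi_\rho\rangle$ on $M_\rho$.
\item Since $\xi_\rho$ is separating for $M_\rho$ and $\{U_g\}''=M_\rho$, one has $\mathrm{Fix}(T_\rho)=Z(M_\rho)$. This is where both generation hypotheses are used.
\item By normality, $\omega_{x'}=\omega_{x'}\circ E$, where $E$ is the ergodic limit. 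It is a positive unital projection onto $Z(M_\rho)$, hence a conditional expectation by Tomiyama.
\item Write $\omega_{x'}|_{Z(M_\rho)}=\omega_\rho(z\,\cdot\,)$ with $z\in Z(M_\rho)$. Bimodularity of $E$ and $\omega_\rho\circ E=\omega_\rho$ give $\omega_{x'}=\omega_\rho(z\,\cdot\,)=\omega_z$ on all of $M_\rho$.
\item Uniqueness of the Radon--Nikodym derivative then gives $x'=z$.
\end{itemize}

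Thus $\mathrm{Fix}(P)=Z(M_\rho)$, and your Step 3 goes through; it also gives extreme $\Leftrightarrow$ factor directly. With that repair your route is a genuine alternative to the paper's. The paper instead verifies Batty's criterion $\varphi([a_N,b])\to0$ through elementary Ces\`aro estimates. Alternatively, it combines extreme $\Leftrightarrow$ factor with Batty's $\Sigma$-equivalence theorem.
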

 We give two proofs of the theorem: one based on Batty's
characterization of invariant-state simplices \cite[Theorem~6.1]{cjk1982invariant},
and one based on the GNS characterization of extreme stationary states developed
below.
Batty's criterion also supplies another family of actions for which the stationaries form a simplex. Suppose that $\Gamma$ acts on a countable set $I$ and that $A$ contains a
norm-dense local $*$-subalgebra
\mbox{$\displaystyle A_{\mathrm{loc}}=\bigcup_{F\subset I\text{ finite}}A_F$}.
If \mbox{$\alpha_g(A_F)=A_{gF}$}, disjointly supported algebras commute, and every
$\Gamma$-orbit in $I$ is infinite, then Theorem~\ref{thm:Choquet.simplex.finite.sets.}
shows that $S_\mu(A)$ is again a Choquet simplex. In particular, the result applies to
Bernoulli shifts over infinite countable groups.
This may be viewed as a stationary counterpart of the classical theory of invariant
states on asymptotically abelian $C^*$-dynamical systems
\cite{doplicher1969invariant}.

The GNS representation gives an operator-algebraic description of the extreme
boundary of $S_\mu(A)$. For \mbox{$\varphi\in S_\mu(A)$}, set
\mbox{$M_\varphi=\pi_\varphi(A)''$}, and denote by $\omega_\varphi$ the normal extension of
$\varphi$ and by $T_\varphi$ the induced Markov operator on $M_\varphi$. We prove the
following characterization.

\begin{introthm}[Theorem~\ref{thm:extreme-factor}]
Let $\varphi\in S_{\mu}(A)$. The following are
equivalent:
\begin{enumerate}
    \item $\varphi$ is an extreme point of $S_\mu(A)$;
    \item $M_\varphi$ is a factor;
    \item \mbox{$\mathrm{Fix}(T_\varphi)=\mathbb{C}1$};
    \item $\omega_\varphi$ is the unique normal $\mu$-stationary state on $M_\varphi$.
\end{enumerate}
\end{introthm}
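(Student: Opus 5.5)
The plan is to prove the cycle $(1)\Rightarrow(2)\Rightarrow(3)\Rightarrow(4)\Rightarrow(1)$. Throughout, write $(\pi_\varphi,H_\varphi,\xi_\varphi)$ for the GNS triple and $u_g=\pi_\varphi(g)\in M_\varphi$ for $g\in\Gamma$. Since $\Gamma$ acts by conjugation, the Markov operator is
\[
T_\varphi(x)=\sum_{g\in\Gamma}\mu(g)\,u_g^*xu_g, \qquad x\in M_\varphi.
\]
This is a normal unital completely positive map, and stationarity of $\varphi$ says exactly that $\omega_\varphi\circ T_\varphi=\omega_\varphi$. One structural fact is used repeatedly: $\Gamma$ generates $A$, so the $u_g$ generate $M_\varphi$ as a von Neumann algebra. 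Hence an element commutes with all $u_g$ iff it lies in $M_\varphi'$.

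\emph{Step $(4)\Rightarrow(1)$.} Suppose $\varphi=t\psi_1+(1-t)\psi_2$ with $\psi_i\in S_\mu(A)$ and $0<t<1$. Then $\psi_1\le t^{-1}\varphi$, and similarly for $\psi_2$. By the Radon--Nikodym theorem each $\psi_i$ has the form $\langle\pi_\varphi(\cdot)h_i\xi_\varphi,\xi_\varphi\rangle$ with $h_i\in M_\varphi'_+$, so it extends to a normal state $\tilde\psi_i$ on $M_\varphi$. This extension is $T_\varphi$-invariant: invariance holds on the weak$^*$-dense subalgebra $\pi_\varphi(A)$, and both $\tilde\psi_i\circ T_\varphi$ and $\tilde\psi_i$ are normal. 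Uniqueness in (4) then gives $\tilde\psi_i=\omega_\varphi$, so $\psi_i=\varphi$ and $\varphi$ is extreme.

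\emph{Step $(1)\Rightarrow(2)$.} Let $z$ be a central projection of $M_\varphi$ and set $\varphi_z(a)=\omega_\varphi(z\pi_\varphi(a))$. Since $z$ is central we have $T_\varphi(zx)=zT_\varphi(x)$, and therefore
\[
\sum_g\mu(g)\,(g\cdot\varphi_z)(a)=\omega_\varphi\bigl(T_\varphi(z\pi_\varphi(a))\bigr)=\omega_\varphi(z\pi_\varphi(a))=\varphi_z(a).
\]
So $\varphi_z$ and $\varphi_{1-z}$ are positive $\mu$-stationary functionals summing to $\varphi$. If $z\neq0,1$, then $z\xi_\varphi\neq0$ and $(1-z)\xi_\varphi\neq0$, because $\xi_\varphi$ is cyclic for $M_\varphi$ and hence separating for $M_\varphi'\ni z$. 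Extremality then forces $\varphi_z=c\varphi$ for some scalar $c$. This means $\langle\pi_\varphi(a)(z-c)\xi_\varphi,\xi_\varphi\rangle=0$ for all $a$, hence $(z-c)\xi_\varphi=0$, and by the separating property $z=c$, a contradiction. Thus $M_\varphi$ is a factor.

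\emph{Steps $(2)\Rightarrow(3)$ and $(3)\Rightarrow(4)$.} I expect these to be the main obstacle, because $\omega_\varphi$ need not be faithful on $M_\varphi$.

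For $(2)\Rightarrow(3)$, the goal is $\mathrm{Fix}(T_\varphi)\subseteq Z(M_\varphi)$; the reverse inclusion is clear. Let $x\in\mathrm{Fix}(T_\varphi)$. Using $\omega_\varphi\circ T_\varphi=\omega_\varphi$ and $T_\varphi(x)=x$, a direct expansion gives
\[
\sum_g\mu(g)\,\|(u_g^*xu_g-x)\xi_\varphi\|^2=\omega_\varphi(T_\varphi(x^*x))-\omega_\varphi(x^*x)=0,
\]
so $u_g^*xu_g\xi_\varphi=x\xi_\varphi$ for $g\in\operatorname{supp}\mu$. The task is to upgrade this to $u_g^*xu_g=x$. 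I would first try to propagate the identity to the dense set $M_\varphi\xi_\varphi$. One route applies the same computation to the vector states $\langle\,\cdot\,y\xi_\varphi,y\xi_\varphi\rangle$ for $y\in M_\varphi'$; these states are again $T_\varphi$-invariant because $y$ commutes with every $u_g$. This yields $(u_g^*xu_g-x)y\xi_\varphi=0$ for all $y\in M_\varphi'$. It then remains to show that $M_\varphi'\xi_\varphi$ is dense, i.e.\ that $\xi_\varphi$ is separating for $M_\varphi$. I am not sure this holds, and this is precisely where the difficulty lies. If it fails, I would instead restrict to the support projection of $\omega_\varphi$ and argue that this projection is $T_\varphi$-invariant. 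Once $u_g^*xu_g=x$ holds for $g\in\operatorname{supp}\mu$, the fact that $\mu$ is generating gives the same for all of $\Gamma$. Hence $x$ commutes with every $u_g$, so $x\in M_\varphi\cap M_\varphi'=\mathbb{C}1$.

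For $(3)\Rightarrow(4)$, I would use a normal ergodic projection. Cesàro averages $\frac1n\sum_{k<n}T_\varphi^k$ have weak$^*$ cluster points, and I would take one as $E\colon M_\varphi\to\mathrm{Fix}(T_\varphi)$, following a Kovács--Szűcs type argument; its normality should come from the invariant normal state $\omega_\varphi$ with cyclic vector. Every normal $\mu$-stationary state $\psi$ on $M_\varphi$ satisfies $\psi=\psi\circ E$. If $\mathrm{Fix}(T_\varphi)=\mathbb{C}1$, then $E(x)=\omega_\varphi(x)1$, because $\omega_\varphi\circ E=\omega_\varphi$. Hence $\psi(x)=\psi(E(x))=\omega_\varphi(x)$ for all $x$, which proves (4).
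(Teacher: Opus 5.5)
Your steps $(4)\Rightarrow(1)$, $(1)\Rightarrow(2)$ and $(3)\Rightarrow(4)$ are sound. In $(1)\Rightarrow(2)$ you get $z\xi_\varphi\neq0$ from $z\in M_\varphi'$, which avoids faithfulness. In $(3)\Rightarrow(4)$ you do not need $E$ to be normal: $\psi\circ E=\psi$ and $\omega_\varphi\circ E=\omega_\varphi$ only use normality of $\psi,\omega_\varphi$ and boundedness of $E_N(x)$.

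The genuine gap is $(2)\Rightarrow(3)$, exactly where you hesitated. To pass from $u_g^*xu_g\xi_\varphi=x\xi_\varphi$ to $u_g^*xu_g=x$ you need $\xi_\varphi$ to be separating for $M_\varphi$, and your proposed detour rests on a false claim. The functionals $x\mapsto\langle xy\xi_\varphi,y\xi_\varphi\rangle$ with $y\in M_\varphi'$ are \emph{not} $T_\varphi$-invariant in general. Moving $y$ past $u_g$ turns $\langle T_\varphi(x)y\xi_\varphi,y\xi_\varphi\rangle$ into $\sum_g\mu(g)\langle y^*y\,x\,u_g\xi_\varphi,u_g\xi_\varphi\rangle$, and stationarity of $\omega_\varphi$ says nothing about $y^*yx\notin M_\varphi$. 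Concretely, take the canonical trace $\tau$ on $C^*(\mathbb{F}_2)$ and $y=JbJ$ with $b\in M_\tau$. This functional is $x\mapsto\tau(xb^*b)$, and $\tau(T_\varphi(x)b^*b)=\tau\bigl(x\sum_g\mu(g)u_gb^*bu_g^*\bigr)$. So invariance forces $b^*b$ to be fixed by $\sum_g\mu(g)\operatorname{Ad}(u_g)$, hence central, hence scalar, which fails for most $b$. For unitary $y$ the functional is just $\omega_\varphi$ again, so nothing is gained.

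The missing idea is that in this setting $\omega_\varphi$ \emph{is} faithful, and this follows from stationarity iterated along the walk. If $x\xi_\varphi=0$, then for every $n\geq1$
\[
0=\omega_\varphi(x^*x)=\omega_\varphi\bigl(T_\varphi^n(x^*x)\bigr)=\sum_{h\in\Gamma}\mu^{*n}(h)\,\|xu_h\xi_\varphi\|^2 .
\]
Hence $xu_h\xi_\varphi=0$ for every $h$ in the semigroup generated by $\operatorname{supp}\mu$, that is, for every $h\in\Gamma$. Since $\Gamma$ is a group of unitaries generating $A$, $\operatorname{span}\{u_h\xi_\varphi\}$ is dense in $H_\varphi$, so $x=0$. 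This is the paper's Lemma~\ref{lem:separating}. With it, your computation gives $u_g^*xu_g=x$ for $g\in\operatorname{supp}\mu$ at once, since $u_g^*xu_g-x\in M_\varphi$. The rest of your $(2)\Rightarrow(3)$ then goes through, matching Lemma~\ref{lem:fixed-points}.

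Your support-projection fallback can also be completed, but only with the same ingredient. From $\omega_\varphi(T_\varphi(1-s))=0$ you get $u_g^*(1-s)u_g\leq 1-s$ for $g\in\operatorname{supp}\mu$, hence for all $g\in\Gamma$. Applying this to $g^{-1}$ gives equality, so $1-s$ is central. Then $(1-s)\xi_\varphi=0$ with $1-s\in M_\varphi'$ forces $s=1$.
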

The factorial characterization also has a partial converse at the level of convex structure.
For a semigroup of unital completely positive maps on $A$, we define the compact convex set
$$S_{\Sigma}(A)=\{\varphi\in S_{\mu}(A):\varphi\circ \sigma=\varphi,\,\forall\sigma\in\Sigma\}.$$
We prove that if the extreme points of
$S_\Sigma(A)$ are precisely the states with factorial GNS algebra, then
$S_\Sigma(A)$ is a Choquet simplex. Applied to Theorem~\ref{thm:extreme-factor}, this
gives the second proof of Theorem~\ref{thm:stationaries.form.simplex}.
For a general $\Gamma$-action on a separable $C^*$-algebra, we further show that $S^\Gamma(A)$ is a closed face of $S_\mu(A)$. At the opposite end, we
write $S^\Gamma(A)^\perp$ for the stationary states which dominate no nonzero positive
invariant functional. If $\nu_\varphi$ denotes the unique Choquet maximal representing
measure of \mbox{$\varphi\in S_\mu(A)$}, then
\[
    \varphi\in S^\Gamma(A)^\perp
    \quad\Longleftrightarrow\quad
    \nu_\varphi(\partial_eS^\Gamma(A))=0.
\]
In the language of split faces, this characterization identifies $S^\Gamma(A)^\perp$ as the
complementary face of $S^\Gamma(A)$, and yields the following canonical decomposition.

\begin{introthm}[Theorem~\ref{thm:trace-decomposition}]
Assume $S_{\mu}(A)$ is a Choquet simplex. Let \mbox{$\varphi\in S_\mu(A)$} be neither invariant nor purely noninvariant. Then there are unique
$\rho\in S^\Gamma(A)$, \mbox{$\psi\in S^\Gamma(A)^\perp$}, and $t\in(0,1)$ such that
\[
    \varphi=t\rho+(1-t)\psi.
\]
\end{introthm}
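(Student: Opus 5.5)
The plan is to combine two facts stated just before the theorem. First, $S^\Gamma(A)$ is a closed face of $S_\mu(A)$. Second, for $\varphi\in S_\mu(A)$ with unique maximal representing measure $\nu_\varphi$, we have $\varphi\in S^\Gamma(A)^\perp$ if and only if $\nu_\varphi(\partial_eS^\Gamma(A))=0$.

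\textbf{Setting up the pieces.} Write $K=S_\mu(A)$ and $F=S^\Gamma(A)$. Since $F$ is a closed face, $F\cap\partial_eK=\partial_eF$. Because $K$ is metrizable ($A$ is separable), $\partial_eK$ is a $G_\delta$, and $\partial_eF$ is a Borel subset of it. Define
\[
t=\nu_\varphi(\partial_eF)\in[0,1].
\]
If $t=0$, then $\varphi\in F^\perp$, so $\varphi$ is purely noninvariant. If $t=1$, then $\nu_\varphi$ is carried by the closed face $F$, so its barycenter $\varphi$ lies in $F$. Both cases are excluded by hypothesis, so $t\in(0,1)$.

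\textbf{Existence.} Define the probability measures
\[
\nu_1=t^{-1}\nu_\varphi|_{\partial_eF},\qquad \nu_2=(1-t)^{-1}\nu_\varphi|_{\partial_eK\setminus\partial_eF},
\]
and let $\rho$ and $\psi$ be their barycenters. Then $\varphi=t\rho+(1-t)\psi$ by affinity of the barycenter map. Since $\nu_1$ is supported on the closed convex set $F$, we get $\rho\in F$.

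For $\psi$, the first step is to note that $\nu_2$ is maximal. Any measure dominated (in the Choquet order) by a maximal measure is maximal; the restriction of a maximal measure to a Borel set and renormalization preserves maximality in the metrizable case, since boundary measures are exactly the maximal ones. By uniqueness of maximal representing measures in the simplex $K$, this gives $\nu_\psi=\nu_2$. Hence $\nu_\psi(\partial_eF)=0$, and the stated criterion gives $\psi\in F^\perp$.

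\textbf{Uniqueness.} Suppose $\varphi=s\rho'+(1-s)\psi'$ with $\rho'\in F$, $\psi'\in F^\perp$ and $s\in(0,1)$. The map $\varphi\mapsto\nu_\varphi$ is affine on a Choquet simplex, because the sum of maximal measures is maximal and representing measures are unique. Therefore
\[
\nu_\varphi=s\nu_{\rho'}+(1-s)\nu_{\psi'}.
\]
Since $\rho'\in F$ and $F$ is a closed face, $\nu_{\rho'}$ is carried by $F\cap\partial_eK=\partial_eF$. On the other hand, $\nu_{\psi'}(\partial_eF)=0$ by the criterion. Evaluating both sides on $\partial_eF$ gives $s=t$. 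Restricting both sides to $\partial_eF$ then gives $\nu_{\rho'}=\nu_1$, and restricting to its complement gives $\nu_{\psi'}=\nu_2$. Taking barycenters yields $\rho'=\rho$ and $\psi'=\psi$.

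\textbf{Main obstacle.} The delicate point is the claim that a closed face $F$ of a metrizable simplex carries the maximal measures of its points. A measure on $F$ that is maximal for $F$ is also maximal for $K$; this is where uniqueness of $\nu_{\rho'}$ in $K$ is used. I would verify it by arguing with the boundary characterization: a maximal measure for $F$ is carried by $\partial_eF\subset\partial_eK$, hence is a boundary measure for $K$. This justifies $\nu_{\rho'}(\partial_eF)=1$.
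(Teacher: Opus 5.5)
Your proposal is correct and follows the paper's proof essentially step for step. You set $t=\nu_\varphi(\partial_eS^\Gamma(A))$, use the measure criterion to get $0<t<1$, and take barycenters of the normalized restrictions of $\nu_\varphi$; for uniqueness you write $\nu_\varphi=s\nu_{\rho'}+(1-s)\nu_{\psi'}$ and evaluate on $\partial_eS^\Gamma(A)$ and its complement. One slip to fix: a measure dominated by a maximal measure \emph{in the Choquet order} need not be maximal (e.g.\ $\delta_\varphi\prec\nu_\varphi$), so that sentence is false as written. What you actually need is that a normalized restriction of a boundary measure is again a boundary measure, and your metrizable boundary-measure argument already justifies this.
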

In the inner generating setting, $S^\Gamma(A)=T(A)$, and the decomposition above is the
tracial--purely nontracial decomposition.
The type of $M_{\varphi}$
gives
an operator-algebraic characterization of the two ends of the decomposition above.

\begin{introthm}[Theorem~\ref{thm:finite-typeIII}]
Let $\varphi\in S_{\mu}(A)$. Then
\begin{align*}
    \varphi\in T(A)
    &\quad\Longleftrightarrow\quad M_\varphi\text{ is finite},\\
    \varphi\text{ is purely nontracial}
    &\quad\Longleftrightarrow\quad M_\varphi\text{ is of type }\mathrm{III}.
\end{align*}
\end{introthm}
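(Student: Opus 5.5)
Both equivalences are read off the same object: normal traces on $M_\varphi$ and how $\omega_\varphi$ relates to them. Throughout, $\Gamma\le U(A)$ acts by conjugation and generates $A$, so $S^\Gamma(A)=T(A)$, and the Markov operator $T_\varphi(x)=\sum_g\mu(g)\pi_\varphi(g)^*x\,\pi_\varphi(g)$ on $M_\varphi$ preserves $\omega_\varphi$.

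\emph{First equivalence.} If $\varphi\in T(A)$, then $\omega_\varphi$ is a faithful normal tracial state on $M_\varphi$, since the GNS vector of a trace is separating. Hence $M_\varphi$ is finite. For the converse, assume $M_\varphi$ is finite and let $\Phi$ be its centre-valued trace, which is normal, faithful and unitarily invariant. Then $\Phi\circ T_\varphi=\Phi$, because $\Phi(u^*xu)=\Phi(x)$. Consider the averages $\frac1n\sum_{k<n}T_\varphi^k(x)$. By $\omega_\varphi$-invariance and weak$^*$ compactness of bounded sets, a weak$^*$ cluster point $E(x)$ lies in $\mathrm{Fix}(T_\varphi)$, and $\omega_\varphi\circ E=\omega_\varphi$.

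The main obstacle is to show that $\mathrm{Fix}(T_\varphi)$ is contained in the centre $Z(M_\varphi)$. I expect the ingredients already used for Theorem~\ref{thm:extreme-factor} give this, via the multiplicative domain argument: the map is unital completely positive and stationary for a generating measure, so a fixed point lies in the commutant of $\pi_\varphi(\Gamma)$, which is $Z(M_\varphi)$. The more direct route I would try first works with the centre-valued trace instead of $\omega_\varphi$. Set $\tau=\omega_\varphi|_{Z(M_\varphi)}\circ\Phi$; this is a normal tracial state on $M_\varphi$. Then decompose $\omega_\varphi$ over the centre, $M_\varphi=\int^\oplus M_z$ with each $M_z$ a finite factor carrying a unique trace $\tau_z$. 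A stationary normal state on the finite factor $M_z$ should coincide with $\tau_z$. I would prove this with a Schwarz-type strict convexity argument: compare the Radon--Nikodym density $h$ of $\omega$ with respect to $\tau_z$, using
\[
\tau_z(h^2)\ge\tau_z(T(h)^2),
\]
with equality forcing $h$ to commute with all $\pi(g)$, hence $h$ is scalar. Since $\omega_\varphi$ is stationary with density $h$, this gives $\omega_\varphi=\tau$. Consequently $\varphi=\tau\circ\pi_\varphi$ is tracial. The density trick needs care: the density may be unbounded, so I would truncate it or work in $L^2(M_\varphi,\tau)$ with the contraction induced by $T_\varphi$.

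\emph{Second equivalence.} First reduce to the simplex. By Theorem~\ref{thm:stationaries.form.simplex} and Theorem~\ref{thm:trace-decomposition}, $\varphi$ is purely nontracial exactly when it dominates no nonzero positive tracial functional. Suppose $M_\varphi$ is not of type III. Then there is a nonzero central projection $z$ with $zM_\varphi$ semifinite. Because $z$ is central, $\omega_\varphi(z\,\cdot)$ is again stationary, so $\varphi_z=\omega_\varphi(z\pi_\varphi(\cdot))$ is a stationary positive functional dominated by $\varphi$. Its GNS algebra is $zM_\varphi$.

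Running the argument of the first part on $zM_\varphi$ with a normal semifinite faithful trace produces a nonzero positive tracial functional dominated by $\varphi_z$. Here semifiniteness rather than finiteness needs a finite-trace projection cut-down, and I would first try the density argument with $\tau$ semifinite. This contradicts pure nontraciality.

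Conversely, suppose $\varphi\ge t\rho$ with $\rho$ tracial and $t>0$. Then $\rho$ extends to a normal functional on $M_\varphi$ dominated by $\omega_\varphi$, with density in $M_\varphi'$. Its support $z$ is a projection in $M_\varphi''\cap$ (the trace support), and it is central because $\rho$ is a trace. So $zM_\varphi$ carries a faithful normal trace and is finite, and $M_\varphi$ is not of type III.
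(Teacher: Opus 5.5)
Your two easy implications are correct and agree with the paper. A trace has a separating GNS vector, so $\omega_\varphi$ is a faithful normal tracial state and $M_\varphi$ is finite. If $0\neq\rho\le\varphi$ is tracial, its normal extension is a nonzero normal finite trace on $M_\varphi$ with central support, which a type~$\mathrm{III}$ algebra cannot carry.

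For finite $\Rightarrow$ tracial, the ``main obstacle'' you isolate, $\mathrm{Fix}(T_\varphi)\subseteq Z(M_\varphi)$, is exactly Lemma~\ref{lem:fixed-points}. Once you have it, you are two lines from the end and do not take them. Your centre-valued trace $\Phi$ is normal and satisfies $\Phi\circ T_\varphi=\Phi$, so $\Phi\circ E=\Phi$. Since $E(x)\in Z(M_\varphi)$ and $\Phi$ is the identity on the centre, $E(x)=\Phi(E(x))=\Phi(x)$. Hence $E=\Phi$, and $\omega_\varphi=\omega_\varphi\circ E=\omega_\varphi\circ\Phi$ is tracial. That is the paper's proof. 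Your detour through a central decomposition and fibrewise $L^1$-densities is plausible in outline, but it is unnecessary. It also leaves fibrewise stationarity and the truncation of unbounded densities unproved.

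The genuine gap is in purely nontracial $\Rightarrow$ type~$\mathrm{III}$, in the semifinite but not finite case. You cannot simply ``run the argument of the first part'' on $zM_\varphi$, for three reasons:
\begin{enumerate}
\item $zM_\varphi$ has no centre-valued trace.
\item Its central fibres may be of type $\mathrm{I}_\infty$ or $\mathrm{II}_\infty$ and carry no normal tracial state.
\item Compressing $\omega_\varphi$ by a finite-trace projection $q$ destroys stationarity, because $q$ is not central.
\end{enumerate}
You notice this and name two possible remedies, but you carry out neither.

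The missing idea, which the paper uses, is to make the finite-trace projection central by averaging it with $E$:
\begin{enumerate}
\item Since $\mathrm{Tr}\circ T_\varphi=\mathrm{Tr}$, lower semicontinuity of $\mathrm{Tr}$ along the subnet defining $E$ gives $\mathrm{Tr}(E(q))\le\mathrm{Tr}(q)<\infty$.
\item $E(q)$ lies in $\mathrm{Fix}(T_\varphi)\cap zM_\varphi=Z(zM_\varphi)$.
\item $E(q)\neq0$, because $\omega_\varphi(E(q))=\omega_\varphi(q)>0$ by faithfulness.
\item For small $\varepsilon>0$, the spectral projection $p=1_{[\varepsilon,\infty)}(E(q))$ is then a nonzero central projection of finite trace, so $pM_\varphi$ is finite.
\item The first part, applied to the stationary state $\omega_\varphi(p\,\cdot\,)/\omega_\varphi(p)$, shows that this state is tracial.
\item Therefore $a\mapsto\omega_\varphi(p\pi_\varphi(a))$ is a nonzero tracial functional below $\varphi$, contradicting pure nontraciality.
\end{enumerate}
Your density route could also be completed. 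One would have to show that the $L^1(\mathrm{Tr})$-density $h$ of $\omega_\varphi|_{zM_\varphi}$, which satisfies $h=\sum_g\mu(g)U_ghU_g^*$, is affiliated with the centre. This could be done, for example, by applying the $L^2$ strict-convexity argument to bounded operator-concave truncations of $h$. The proposal does not supply this step.
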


Combining Theorems~\ref{thm:extreme-factor} and~\ref{thm:finite-typeIII}, the GNS
algebra of an extreme stationary state is a finite factor when the state is tracial and
a type~$\mathrm{III}$ factor otherwise.

In the examples and applications section, we turn to the topology of the extreme boundary. We show that, for \mbox{$2\leq d\leq\infty$}, the stationary state simplex of $C^*(\mathbb{F}_d)$ is not Bauer, whereas for every nontrivial property~$(T)$ group $\Gamma$, the stationary state simplex of $C^*(\Gamma)$ is not Poulsen.
Lastly, we prove a sufficient condition for $S_{\mu}(A)$ to form a Bauer simplex.
\begin{introthm}[Theorem \ref{thm:bauer.simplex}]
        Assume there exists $\varepsilon>0$ and a finite set \mbox{$K\subset \mathrm{supp}(\check\mu)$} such that
    $$\max_{g\in K}\|\alpha_g(a)-a\|_{\varphi}\geq\varepsilon$$
    for every $\varphi\in \partial_eS_{\mu}(A)$ and every $a\in A_{sa}$ such that $\varphi(a)=0$ and $\|a\|_{\varphi}=1$. Then $S_{\mu}(A)$ is a Bauer simplex.
\end{introthm}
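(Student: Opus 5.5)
Since $S_\mu(A)$ is already known to be a Choquet simplex (Theorem~\ref{thm:stationaries.form.simplex}), it is a Bauer simplex exactly when $\partial_eS_\mu(A)$ is weak$^*$ closed. So the plan is to take a net (a sequence suffices, by separability and metrizability) $\varphi_n\in\partial_eS_\mu(A)$ with $\varphi_n\to\varphi$ weak$^*$, and show that $\varphi$ is extreme. By Theorem~\ref{thm:extreme-factor}, extremality is equivalent to $\mathrm{Fix}(T_\varphi)=\mathbb{C}1$, and this is the criterion we aim to verify. The hypothesis should be read as a uniform spectral gap. For extreme $\varphi_n$, a self-adjoint $a$ with $\varphi_n(a)=0$ and $\|a\|_{\varphi_n}=1$ cannot be almost invariant under the finite set $K$, and the constant $\varepsilon$ does not depend on $n$. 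This uniformity is what we expect to pass to the limit.

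The first step is to turn non-extremality of $\varphi$ into an almost-invariant element of $A$. Suppose $\varphi$ is not extreme. Then $\mathrm{Fix}(T_\varphi)$ contains a self-adjoint $x\notin\mathbb{C}1$. After subtracting $\omega_\varphi(x)$ and normalizing, we may assume $\omega_\varphi(x)=0$ and $\|x\|_{\omega_\varphi}=1$. Since $x$ is $T_\varphi$-fixed and $\omega_\varphi$ is stationary, the usual convexity argument (equality in the Kadison--Schwarz inequality $T_\varphi(x)^2\le T_\varphi(x^2)$, summed against $\mu$) shows that $x$ is invariant in $L^2(M_\varphi,\omega_\varphi)$ under each $\alpha_g$ with $g\in\mathrm{supp}(\check\mu)$. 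This is where $\check\mu$ enters, depending on the convention for $T_\varphi$. Next, by Kaplansky density, choose $a\in A_{sa}$ with $\pi_\varphi(a)$ close to $x$ in $\|\cdot\|_\varphi$. Recentring and renormalizing, we get $\varphi(a)=0$, $\|a\|_\varphi=1$ and $\max_{g\in K}\|\alpha_g(a)-a\|_\varphi<\varepsilon/3$. The finiteness of $K$ is used here.

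The second step is to transfer this to $\varphi_n$. The quantities $\varphi_n(a)$, $\|a\|_{\varphi_n}^2=\varphi_n(a^2)$ and $\|\alpha_g(a)-a\|_{\varphi_n}^2=\varphi_n\big((\alpha_g(a)-a)^*(\alpha_g(a)-a)\big)$ are evaluations at fixed elements of $A$. They therefore converge to the corresponding $\varphi$-quantities. For large $n$, we set $a_n=(a-\varphi_n(a))/\|a-\varphi_n(a)\|_{\varphi_n}$. Then $a_n$ is admissible for $\varphi_n$ and satisfies $\max_{g\in K}\|\alpha_g(a_n)-a_n\|_{\varphi_n}<\varepsilon$. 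This contradicts the hypothesis, since $\varphi_n\in\partial_eS_\mu(A)$. Hence $\varphi$ is extreme, $\partial_eS_\mu(A)$ is closed, and the simplex is Bauer.

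The main obstacle is the first step: showing that a $T_\varphi$-fixed element is genuinely $\alpha_g$-invariant in $\|\cdot\|_\varphi$ for $g\in\mathrm{supp}(\check\mu)$. The $2$-norm needs care, because $\alpha_g$ need not be $\omega_\varphi$-preserving and $\|\cdot\|_\varphi$ could be either the left or the symmetric norm. I would try the identity
\[
\sum_g\mu(g)\,\omega_\varphi\big(|\alpha_g(x)-x|^2\big)=\omega_\varphi(T_\varphi(x^2))-\omega_\varphi(x^2)=0,
\]
which uses $T_\varphi(x)=x$ together with stationarity. If the paper's Markov-operator convention makes this identity fail, I would instead use the characterization of extreme points directly via a decomposition $\varphi=\tfrac12(\varphi_1+\varphi_2)$, with Radon--Nikodym derivative $h\in\mathrm{Fix}(T_\varphi)$, and take $x=h-1$.
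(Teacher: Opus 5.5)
Your argument is correct, and it is more direct than the paper's.

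\textbf{How the paper argues.} The paper never estimates individual automorphisms at the limit state. It first proves Proposition~\ref{prop:bauer.simplex.sufficient.condition}, which is a gap condition for $\Phi_\mu$. It then reduces your hypothesis to that condition using the identity $\sum_g\check\mu(g)\|\alpha_g(a)-a\|_\varphi^2=-2\operatorname{Re}\varphi\bigl(a(\Phi_\mu(a)-a)\bigr)$, Cauchy--Schwarz and Jensen. This yields $\|\Phi_\mu(a)-a\|_\varphi\geq\frac12\varepsilon^2\min_{g\in K}\check\mu(g)^2$, and it is the only place where $K\subset\operatorname{supp}(\check\mu)$ is used. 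Inside the proposition, approximating a central $z$ by $\pi_\varphi(a)$ at the single vector $\xi_\varphi$ is enough. The reason is that $T_\varphi$ is a $\|\cdot\|_{\omega_\varphi}$-contraction (Kadison--Schwarz plus stationarity), even though the individual $\widetilde\alpha_g$ are not.

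\textbf{Where your write-up needs one more sentence.} The step you single out as the main obstacle is already Lemma~\ref{lem:fixed-points}: by faithfulness of $\omega_\varphi$, $x$ is fixed exactly by every $\widetilde\alpha_g$ with $g\in\Gamma$. The step that actually needs care is the Kaplansky approximation. The bound $\|\pi_\varphi(a)-x\|_{\omega_\varphi}<\delta$ alone does not control $\|\widetilde\alpha_g(\pi_\varphi(a)-x)\|_{\omega_\varphi}$, because $\widetilde\alpha_g$ is not $\omega_\varphi$-preserving. There are two easy fixes:
\begin{enumerate}
\item Use stationarity, which gives $\check\mu(g)\|\widetilde\alpha_g(y)\|_{\omega_\varphi}^2\leq\|y\|_{\omega_\varphi}^2$. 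Here $K\subset\operatorname{supp}(\check\mu)$ and the finiteness of $K$ give a uniform constant.
\item Take the Kaplansky approximant in the strong operator topology at the finitely many vectors $\xi_\varphi$ and $U_g^*\xi_\varphi$, $g\in K$. Then $\|(\alpha_g(a)-a)\|_\varphi\leq\|(\pi_\varphi(a)-x)U_g^*\xi_\varphi\|+\|(\pi_\varphi(a)-x)\xi_\varphi\|$.
\end{enumerate}

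\textbf{Comparison.} With the second fix, your route skips the Jensen reduction entirely. It also proves the conclusion for any finite $K\subset\Gamma$, not only for $K\subset\operatorname{supp}(\check\mu)$. In exchange, the paper's route isolates a criterion phrased purely in terms of the Markov operator $\Phi_\mu$.
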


\section{Preliminaries}
\subsection{. Convexity and stationary states}
Throughout, $A$ is a unital separable $C^*$-algebra and $\Gamma$ is a countable
discrete group, unless stated otherwise. We denote the unitary group of $A$ by $U(A)$,
the state space of $A$ by $S(A)$, and its tracial state space by $T(A)$. State spaces
are equipped with the weak$^*$ topology. Since $A$ is separable, $S(A)$ and all of its
weak$^*$ closed subsets are metrizable.
For a metrizable, compact convex set $K$, we write $\partial_eK$ for its extreme boundary. The set
$K$ is a \emph{Choquet simplex} if every point of $K$ has a unique representing
probability measure concentrated on $\partial_eK$. It is a \emph{Bauer simplex} if
$\partial_eK$ is closed, and a nontrivial metrizable Choquet simplex is the
\emph{Poulsen simplex} if $\partial_eK$ is dense \cite{lindenstrauss1978poulsen}.
A convex subset $F\subset K$ is a
\emph{face} if
\[
    tx+(1-t)y\in F,\quad x,y\in K,\quad 0<t<1,
\]
implies $x,y\in F$. 
We write $\prob(\Gamma)$ for the probability measures on $\Gamma$. A measure
\mbox{$\mu\in\prob(\Gamma)$} is called \emph{generating} if $\mathrm{supp}(\mu)$ generates $\Gamma$
as a semigroup. We write
\[
    \check\mu(g)=\mu(g^{-1}),\qquad g\in\Gamma.
\]
Then $\check\mu$ is generating whenever $\mu$ is generating.
Suppose that $\Gamma$ acts on $A$ by a left action
\mbox{$\alpha\colon\Gamma\to\operatorname{Aut}(A)$}. We abbreviate $\alpha_g(a)$ by
$g\mathbin{\cdot}a$. The corresponding left action on $S(A)$ is
\[
    (g\mathbin{\cdot}\varphi)(a)
    =\varphi(\alpha_{g^{-1}}(a)),
    \qquad g\in\Gamma,\ \varphi\in S(A),\ a\in A.
\]
Its fixed-point space is denoted by
\[
    S^\Gamma(A)
    =\{\varphi\in S(A):g\mathbin{\cdot}\varphi=\varphi
      \text{ for every }g\in\Gamma\}.
\]
The Markov operator on $A$ associated with $\mu$ is
\[
    \Phi_\mu(a)
    =\sum_{g\in\Gamma}\mu(g)\alpha_{g^{-1}}(a)
    =\sum_{g\in\Gamma}\check\mu(g)\alpha_g(a).
\]
Both sums converge in norm. The map $\Phi_\mu$ is unital and completely positive. A
state \mbox{$\varphi\in S(A)$} is \emph{$\mu$-stationary} if
\[
    \varphi\circ\Phi_\mu=\varphi,
\]
or, equivalently,
\[
    \varphi=\sum_{g\in\Gamma}\mu(g)(g\mathbin{\cdot}\varphi).
\]
We denote the stationary state space by $S_\mu(A)$. It is a nonempty weak$^*$ compact
convex subset of $S(A)$: compactness is immediate, and nonemptiness follows by applying
the fixed-point theorem to the continuous affine self-map
\mbox{$\varphi\mapsto\varphi\circ\Phi_\mu$} of $S(A)$.
For $n\geq0$, let $\Phi_\mu^{(n)}$ denote the $n$th iterate of $\Phi_\mu$. With the
usual convention at $n=0$,
\[
    \Phi_\mu^{(n)}
    =\sum_{g\in\Gamma}\check\mu^{*n}(g)\alpha_g.
\]
If \mbox{$\varphi\in S_\mu(A)$}, then
\[
    \varphi\circ\Phi_\mu^{(n)}=\varphi
    \qquad\text{for every }n\geq0.
\]
For $a\in A$, the associated Cesaro averages are
\[
    a_N=\frac1N\sum_{n=0}^{N-1}\Phi_\mu^{(n)}(a).
\]
Thus \mbox{$\varphi(a_N)=\varphi(a)$} for every stationary state $\varphi$.
For the inner actions considered in most of the paper, \mbox{$\Gamma\leq U(A)$} and
\[
    \alpha_g(a)=u_gau_g^*,
\]
where $u_g$ denotes the unitary corresponding to $g\in\Gamma$. If $\Gamma$ generates
$A$ as a $C^*$-algebra, then
\[
    S^\Gamma(A)=T(A).
\]
Indeed, a trace is invariant under every inner automorphism. Conversely, invariance
gives \mbox{$\varphi(u_ga)=\varphi(au_g)$} for every $g\in\Gamma$ and $a\in A$, and the
density of the linear span of the $u_g$'s implies that $\varphi$ is tracial. In
particular,
\[
    T(A)\subset S_\mu(A).
\]

 We shall use the following consequence of Batty's characterization of invariant-state
simplices \cite[Theorem~6.1]{cjk1982invariant}.

\begin{prop}[Batty's criterion]\label{prop:batty.criterion}
Let $\Gamma$ act on $A$, let \mbox{$\mu\in\prob(\Gamma)$}, and let $a_N$ be the Cesaro
averages defined above. If
\[
    \varphi([a_N,b])\xrightarrow[N\to\infty]{}0
\]
for every $a,b\in A$ and every \mbox{$\varphi\in S_\mu(A)$}, then $S_\mu(A)$ is a
Choquet simplex.
\end{prop}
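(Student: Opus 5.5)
To derive Proposition~\ref{prop:batty.criterion} from Batty's theorem, I would first realize $S_\mu(A)$ as an invariant-state space. Consider the semigroup $\Sigma=\{\Phi_\mu^{(n)}:n\geq0\}$ of unital completely positive maps on $A$. A state is $\mu$-stationary exactly when it is $\Sigma$-invariant, since invariance under $\Phi_\mu$ gives invariance under all iterates. So $S_\mu(A)$ coincides with the $\Sigma$-invariant state space.

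Batty's Theorem~6.1 in \cite{cjk1982invariant} characterizes when the invariant states of a semigroup of positive unital maps form a simplex. The condition is that for each invariant state $\varphi$ the associated GNS data are ``$\Sigma$-abelian.'' More precisely, the projection onto invariant vectors in $H_\varphi$, compressed to $\pi_\varphi(A)'$ (or the fixed-point structure), must be commutative. A standard sufficient form of this is an averaging asymptotic abelianness condition: there exist means (here Cesaro averages along the cyclic semigroup $\Sigma$) such that $\varphi([m(a),b])\to0$ for all $a,b\in A$ and all invariant $\varphi$. The plan is to check that our hypothesis is exactly this, with the averaging net $a\mapsto a_N$.

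The key steps are as follows.
\begin{enumerate}
\item Record that $a_N$ is a convex combination of elements $\sigma(a)$ with $\sigma\in\Sigma$. The sequence of averages is asymptotically $\Sigma$-invariant, since $\|\Phi_\mu(a_N)-a_N\|\le 2\|a\|/N$. Hence the operators $\pi_\varphi(a_N)$ have weak cluster points lying in the $\Sigma$-fixed part.
\item In the GNS space of $\varphi\in S_\mu(A)$, define the isometry $V\pi_\varphi(a)\xi_\varphi=\pi_\varphi(\Phi_\mu(a))\xi_\varphi$. This is a contraction by Kadison--Schwarz and stationarity. By the mean ergodic theorem, $\pi_\varphi(a_N)\xi_\varphi\to P\pi_\varphi(a)\xi_\varphi$, where $P$ is the projection onto the fixed vectors of the adjoint.
\item Use the hypothesis to show that the operators $P\pi_\varphi(a)P$ commute. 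This is the abelianness condition in Batty's theorem. One computes
\[
\langle \pi_\varphi(b)P\pi_\varphi(a)\xi_\varphi,\xi_\varphi\rangle=\lim_N\varphi(b^*{}^*a_N)
\]
and compares it with $\lim_N\varphi(a_Nb)$. The hypothesis forces these to agree, and a polarization and density argument then upgrades this to commutativity of $\{P\pi_\varphi(a)P\}$ on $PH_\varphi$.
\item Invoke Batty's theorem to conclude that $S_\mu(A)$ is a Choquet simplex. Metrizability, which is needed so that the definition used in the paper matches, comes from separability.
\end{enumerate}

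The main obstacle is step 3. The issue is matching the hypothesis as stated, with only the expectation $\varphi$ of the commutator tending to zero, to the precise form of abelianness Batty requires. Batty's condition is typically formulated for all invariant states at once. That is helpful, because one can replace $\varphi$ by the states $\varphi(c^*\,\cdot\,c)$ only when these remain stationary, which they do not in general. So I would first try to use Batty's formulation that directly reads ``$\inf$ over convex combinations of $|\varphi([\sigma(a),b])|=0$ for all invariant $\varphi$'' (the $G$-abelian condition of Doplicher--Kastler--Robinson type). For that formulation the hypothesis applies verbatim, with no GNS computation needed.
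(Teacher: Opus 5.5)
Your final plan (regard $S_\mu(A)$ as the invariant states of the unital completely positive semigroup $\Sigma=\{\Phi_\mu^{(n)}:n\geq 0\}$, observe that $a_N\in\mathrm{co}(\Sigma a)$, and feed the hypothesis into Batty's Theorem~6.1) is the same route as the paper, which states this proposition as a consequence of that theorem and gives no further argument. Incidentally, the obstacle you flag in Step~3 is not real: write $\pi_\varphi\circ\Phi_\mu=W^*\rho(\cdot)W$ as a Stinespring dilation and set $U\pi_\varphi(a)\xi_\varphi=\rho(a)W\xi_\varphi$, which is an isometric intertwiner by stationarity; then $V=W^*U$, so $V\eta=\eta$ forces $U\eta=W\eta$, hence $\langle\pi_\varphi(\cdot)\eta,\eta\rangle$ is again stationary and $\pi_\varphi(a_N)\eta\to P\pi_\varphi(a)\eta$, and applying the hypothesis to these vector states and polarizing over $\eta\in PH_\varphi$ gives exactly the commutativity of $P\pi_\varphi(A)P$ you wanted.
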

\subsection{. GNS representations and equivalence relations}
For \mbox{$\varphi\in S(A)$}, we write
$(\pi_\varphi,H_\varphi,\xi_\varphi)$ for its GNS representation and set
\[
    M_\varphi=\pi_\varphi(A)''.
\]
The vector state
\[
    \omega_\varphi(x)=\langle x\xi_\varphi,\xi_\varphi\rangle,
    \qquad x\in M_\varphi,
\]
is the canonical normal extension of $\varphi$ to $M_\varphi$.
For a von Neumann algebra $M$, we write \mbox{$Z(M)=M\cap M'$} for its center, and call $M$
a \emph{factor} when \mbox{$Z(M)=\mathbb{C}1$}.
\begin{dfn}[Equivalence of representations]
Let \mbox{$\pi\colon A\to B(H_\pi)$} and \mbox{$\rho\colon A\to B(H_\rho)$} be nondegenerate
representations.
\begin{enumerate}
    \item The representations $\pi$ and $\rho$ are \emph{unitarily equivalent}, written
    \mbox{$\pi\simeq\rho$}, if there is a unitary \mbox{$V\colon H_\pi\to H_\rho$} such that
    \[
        V\pi(a)=\rho(a)V,\qquad a\in A.
    \]
    \item The representation $\pi$ is \emph{quasi-contained} in $\rho$, written
    \mbox{$\pi\preceq\rho$}, if $\pi$ is unitarily equivalent to a subrepresentation of an
    amplification \mbox{$\rho^{(I)}=\bigoplus_{i\in I}\rho$} for some index set $I$.
    \item The representations $\pi$ and $\rho$ are \emph{quasi-equivalent}, written
    \mbox{$\pi\sim_{\mathrm{qe}}\rho$}, if \mbox{$\pi\preceq\rho$} and \mbox{$\rho\preceq\pi$}.
\end{enumerate}
\end{dfn}

\begin{prop}\label{prop:quasi.equivalence.characterization}
Two nondegenerate representations $\pi$ and $\rho$ are quasi-equivalent if and only if
the correspondence
\[
    \pi(a)\longmapsto\rho(a),\qquad a\in A,
\]
extends to a normal $*$-isomorphism from $\pi(A)''$ onto $\rho(A)''$.
\end{prop}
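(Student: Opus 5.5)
We prove the two implications separately, using the standard structure theory of quasi-containment.

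\emph{($\Rightarrow$).} Suppose $\pi\sim_{\mathrm{qe}}\rho$. Since $\pi\preceq\rho$, there are an index set $I$, a $\rho^{(I)}(A)''$-invariant closed subspace $K\subset H_\rho\otimes\ell^2(I)$, and a unitary $V\colon H_\pi\to K$ intertwining $\pi$ with the restriction of $\rho^{(I)}$ to $K$. The amplification map
\[
x\mapsto x\otimes 1_{\ell^2(I)},\qquad \rho(A)''\to\rho^{(I)}(A)'',
\]
is a normal surjective $*$-homomorphism. Let $p\in\rho^{(I)}(A)'$ be the projection onto $K$. Compression $y\mapsto yp$ is a normal $*$-homomorphism $\rho^{(I)}(A)''\to\rho^{(I)}(A)''p$. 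Conjugating by $V$ then gives a normal $*$-homomorphism
\[
\Psi\colon\rho(A)''\to\pi(A)'',\qquad \Psi(\rho(a))=\pi(a).
\]
Its image is weak$^*$ closed, because the image of a von Neumann algebra under a normal $*$-homomorphism is a von Neumann algebra. This image contains $\pi(A)$, so it equals $\pi(A)''$, and $\Psi$ is onto. Symmetrically, $\rho\preceq\pi$ gives a normal surjection $\Theta\colon\pi(A)''\to\rho(A)''$ with $\Theta(\pi(a))=\rho(a)$. The composite $\Theta\circ\Psi$ is normal and equals the identity on the weak$^*$-dense set $\rho(A)$, hence everywhere; likewise $\Psi\circ\Theta=\mathrm{id}$. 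Thus $\Theta$ is the desired normal $*$-isomorphism extending $\pi(a)\mapsto\rho(a)$.

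\emph{($\Leftarrow$).} Let $\Theta\colon\pi(A)''\to\rho(A)''$ be a normal $*$-isomorphism with $\Theta(\pi(a))=\rho(a)$. By symmetry it suffices to show $\rho\preceq\pi$. Apply the structure theorem for normal representations: the normal representation $\Theta^{-1}\colon\rho(A)''\to\pi(A)''\subset B(H_\pi)$ is unitarily equivalent to an amplification followed by a compression. More precisely, any normal unital representation $\sigma$ of a von Neumann algebra $M\subset B(H)$ on a Hilbert space $L$ decomposes as $\sigma(x)\cong(x\otimes1)q$ for some projection $q\in(M\otimes1)'=M'\bar\otimes B(\ell^2(I))$.

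The proof of this step goes through cyclic decomposition. Write the representation on $H_\pi$ as a direct sum of cyclic representations with vector states $\omega_j$. Each $\omega_j$ is a normal state on $\rho(A)''$, i.e.\ a countable sum of vector functionals $\sum_k\langle\,\cdot\,\eta_k,\eta_k\rangle$ with $\eta_k\in H_\rho$. Hence the GNS representation of $\omega_j$ is the cyclic subrepresentation of $\rho^{(\mathbb{N})}$ generated by $(\eta_k)_k$. Summing over $j$ exhibits $\pi$ as equivalent to a subrepresentation of $\rho^{(I)}$, because $\pi(a)=\Theta^{-1}(\rho(a))$. This is exactly $\pi\preceq\rho$; exchanging the roles of $\pi$ and $\rho$ via $\Theta$ gives $\rho\preceq\pi$.

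The main obstacle is this structure step. It needs the fact that normal states on a von Neumann algebra $M\subset B(H)$ are countable sums of vector functionals from $H$. We take that as the standard characterization of normal functionals, and with it the remaining verifications are routine.
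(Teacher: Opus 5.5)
Your proof is correct. The paper gives no argument of its own and simply cites Blackadar, Section~III.5.1; your two directions are the standard argument behind that citation: amplification followed by compression gives normal surjections $\rho(A)''\to\pi(A)''$ and back, and conversely each cyclic summand of $\pi$ is realized inside $\rho^{(\mathbb{N})}$ because normal positive functionals are countable sums of vector functionals. Two details should be stated explicitly. First, $\Psi$ takes values in $\pi(A)''$ because it is normal and $\rho(A)$ is $\sigma$-weakly dense in $\rho(A)''$ (bicommutant theorem, using nondegeneracy). Second, $\Theta^{-1}$ is automatically normal, as is every $*$-isomorphism of von Neumann algebras. Also, in the converse you say it suffices to show $\rho\preceq\pi$ but then prove $\pi\preceq\rho$; this is harmless by symmetry.
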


\begin{proof}
This is standard; see \cite[Section~III.5.1]{Blackadar2006}.
\end{proof}

We will also make use of the GNS Radon-Nikodym theorem due to Arveson \cite[Theorem 1.4.2]{arveson1969subalgebras}.
\begin{prop}[GNS Radon--Nikodym theorem]\label{prop:gns.radon.nikodym}
If $\varphi$ is a state and
$\psi$ is a positive functional satisfying \mbox{$0\leq\psi\leq c\varphi$}, then there is a
positive element \mbox{$h\in\pi_\varphi(A)'$}, with $0\leq h\leq c$, such that
\[
    \psi(a)
    =\langle\pi_\varphi(a)h\xi_\varphi,\xi_\varphi\rangle
    =\langle\pi_\varphi(a)h^{1/2}\xi_\varphi,
      h^{1/2}\xi_\varphi\rangle,
    \qquad a\in A.
\]
Consequently, $\psi$ extends to the normal positive functional
\[
    \widetilde\psi(x)
    =\langle xh^{1/2}\xi_\varphi,h^{1/2}\xi_\varphi\rangle,
    \qquad x\in M_\varphi.
\]
\end{prop}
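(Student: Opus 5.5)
The statement is the GNS Radon--Nikodym theorem: given a state $\varphi$ and a positive functional $0\le\psi\le c\varphi$, produce $h\in\pi_\varphi(A)'$ with $0\le h\le c$ representing $\psi$. The plan is the standard one: encode $\psi$ as a bounded sesquilinear form on the dense subspace $\pi_\varphi(A)\xi_\varphi$ and read off its operator.

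\textbf{Step 1 (the form is well defined and bounded).} For $a,b\in A$ set
\[
B(\pi_\varphi(a)\xi_\varphi,\pi_\varphi(b)\xi_\varphi)=\psi(b^*a).
\]
Cauchy--Schwarz for the positive functional $\psi$ gives
\[
|\psi(b^*a)|^2\le\psi(a^*a)\,\psi(b^*b)\le c^2\,\varphi(a^*a)\,\varphi(b^*b)=c^2\|\pi_\varphi(a)\xi_\varphi\|^2\,\|\pi_\varphi(b)\xi_\varphi\|^2 .
\]
In particular, if $\pi_\varphi(a)\xi_\varphi=0$ then $B$ vanishes in that slot, so $B$ is well defined. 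Moreover $B$ is bounded by $c$ and extends to $H_\varphi$. The Riesz representation theorem then yields $h\in B(H_\varphi)$ with $\|h\|\le c$ and
\[
\langle h\pi_\varphi(a)\xi_\varphi,\pi_\varphi(b)\xi_\varphi\rangle=\psi(b^*a).
\]

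\textbf{Step 2 (positivity and commutation).} Taking $b=a$ gives
\[
0\le\psi(a^*a)=\langle h\eta,\eta\rangle\le c\,\|\eta\|^2,\qquad \eta=\pi_\varphi(a)\xi_\varphi.
\]
By density, $0\le h\le c$. For the commutant, compute
\[
\langle h\pi_\varphi(x)\pi_\varphi(a)\xi_\varphi,\pi_\varphi(b)\xi_\varphi\rangle=\psi(b^*xa)=\langle h\pi_\varphi(a)\xi_\varphi,\pi_\varphi(x^*)\pi_\varphi(b)\xi_\varphi\rangle .
\]
Hence $h\pi_\varphi(x)=\pi_\varphi(x)h$ on a dense set, so $h\in\pi_\varphi(A)'$.

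\textbf{Step 3 (the formula and its normal extension).} With $b=1$,
\[
\psi(a)=\langle h\pi_\varphi(a)\xi_\varphi,\xi_\varphi\rangle=\langle\pi_\varphi(a)h\xi_\varphi,\xi_\varphi\rangle ,
\]
using the commutation from Step 2. Since $h^{1/2}$ also lies in the commutant and is self-adjoint, this equals $\langle\pi_\varphi(a)h^{1/2}\xi_\varphi,h^{1/2}\xi_\varphi\rangle$. The formula $\widetilde\psi(x)=\langle xh^{1/2}\xi_\varphi,h^{1/2}\xi_\varphi\rangle$ is a vector functional on $M_\varphi$, hence normal, and it agrees with $\psi$ on $\pi_\varphi(A)$.

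The only delicate point is the well-definedness in Step 1, i.e.\ that the kernel of the GNS map is annihilated. The domination $\psi\le c\varphi$ handles exactly this through the Cauchy--Schwarz estimate.
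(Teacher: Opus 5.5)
Your proof is correct and is the standard argument: Cauchy--Schwarz plus the domination $\psi\le c\varphi$ for well-definedness, Riesz for $h$, and commutation and $0\le h\le c$ checked on the dense subspace $\pi_\varphi(A)\xi_\varphi$. The paper gives no proof of its own and simply cites Arveson \cite[Theorem~1.4.2]{arveson1969subalgebras}, whose proof is this same sesquilinear-form construction carried out for completely positive maps; your argument is its specialization to states, with the constant $c$ added by rescaling.
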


The following definition is due to Batty \cite[Section 2]{cjk1982invariant}.
\begin{dfn}[$\Sigma$-equivalence]\label{def:sigma.equivalence}
Let $\Sigma$ be a semigroup of unital completely positive maps on $A$, and set
\[
    S_\Sigma(A)
    =\{\varphi\in S(A):\varphi\circ\sigma=\varphi
      \text{ for every }\sigma\in\Sigma\}.
\]
For \mbox{$\varphi\in S_\Sigma(A)$} and \mbox{$\sigma\in\Sigma$}, the formula
\[
    \widehat\sigma_\varphi(\pi_\varphi(a)\xi_\varphi)
    =\pi_\varphi(\sigma(a))\xi_\varphi
\]
defines a contraction on $H_\varphi$. Let
\[
    H_\varphi^\Sigma
    =\{\eta\in H_\varphi:\widehat\sigma_\varphi\eta=\eta
      \text{ for every }\sigma\in\Sigma\}.
\]
Two states \mbox{$\varphi,\psi\in S_\Sigma(A)$} are \emph{$\Sigma$-equivalent} if there is a
unitary \mbox{$V\colon H_\varphi\to H_\psi$} which intertwines $\pi_\varphi$ and $\pi_\psi$
and satisfies \mbox{$V(H_\varphi^\Sigma)=H_\psi^\Sigma$}.
\end{dfn}
In particular, $\Sigma$-equivalent states have
unitarily equivalent GNS representations.

\section{The simplex structure of stationary states}
Throughout this section and the following sections, $A$ is a separable $C^*$-algebra, $\Gamma$ is a discrete, countable group acting on $A$, and $\mu\in\prob(\Gamma)$ is generating.
\begin{lem}
    Let $a\in A$. For \mbox{$N\in\mathbb{N}$}, define 
    \mbox{$\displaystyle a_N=\frac{1}{N}\sum_{n=0}^{N-1}\Phi_{\mu}^{(n)}(a)$}. Then for every positive integer $m<N$, 
    $$\|a_N-\Phi_{\mu}^{(m)}(a_N)\|\leq \frac{2m}{N}\|a\|.$$
    \label{lem:bound.on.difference}
\end{lem}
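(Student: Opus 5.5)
The plan is a telescoping computation that uses only two facts: $\Phi_\mu$ commutes with its own iterates, and every iterate is a contraction. First, since $\Phi_\mu^{(n)}$ is unital and completely positive, we have $\|\Phi_\mu^{(n)}\|=\|\Phi_\mu^{(n)}(1)\|=1$ for every $n\geq 0$. Alternatively, this follows directly from the formula $\Phi_\mu^{(n)}=\sum_g\check\mu^{*n}(g)\alpha_g$, which is a convex combination of automorphisms. Second, $\Phi_\mu^{(m)}\circ\Phi_\mu^{(n)}=\Phi_\mu^{(n+m)}$, and $\Phi_\mu^{(m)}$ is linear, so we can compute
\[
    \Phi_\mu^{(m)}(a_N)=\frac1N\sum_{n=0}^{N-1}\Phi_\mu^{(n+m)}(a)=\frac1N\sum_{n=m}^{N+m-1}\Phi_\mu^{(n)}(a).
\]

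Subtracting this from $a_N=\frac1N\sum_{n=0}^{N-1}\Phi_\mu^{(n)}(a)$, the terms with indices $m\le n\le N-1$ cancel, since $m<N$. We are left with
\[
    a_N-\Phi_\mu^{(m)}(a_N)=\frac1N\Bigl(\sum_{n=0}^{m-1}\Phi_\mu^{(n)}(a)-\sum_{n=N}^{N+m-1}\Phi_\mu^{(n)}(a)\Bigr).
\]
The right-hand side consists of $2m$ terms, each of norm at most $\|a\|$ by the contractivity noted above. The triangle inequality then gives the bound $\frac{2m}{N}\|a\|$.

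No step is a genuine obstacle. The only points to check are the index bookkeeping in the cancellation, which uses $m<N$ so that the two ranges overlap exactly in $\{m,\dots,N-1\}$, and the convention $\Phi_\mu^{(0)}=\mathrm{id}$.
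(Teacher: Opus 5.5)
Your proposal is correct and follows the same route as the paper's proof: shift the index using $\Phi_\mu^{(m)}\circ\Phi_\mu^{(n)}=\Phi_\mu^{(n+m)}$, cancel the overlapping range $m\le n\le N-1$, and bound the remaining $2m$ terms using contractivity of each $\Phi_\mu^{(n)}$. Your index bookkeeping and your justification of contractivity, either via unital complete positivity or as a convex combination of automorphisms, are both fine.
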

\begin{proof}
    By definition, for $m<N$,
    $$\Phi_{\mu}^{(m)}(a_N)-a_N=\frac{1}{N}\left[\sum_{n=0}^{N-1}\Phi_{\mu}^{(m+n)}(a)-\sum_{n=0}^{N-1}\Phi_{\mu}^{(n)}(a)\right]=\frac{1}{N}\left[\sum_{n=N}^{N-1+m}\Phi_{\mu}^{(n)}(a)-\sum_{n=0}^{m-1}\Phi_{\mu}^{(n)}(a)\right].$$
    Since all the $\Phi_{\mu}^{(n)}$'s are contractions,
    $$\|a_N-\Phi_{\mu}^{(m)}(a_N)\|\leq \frac{1}{N}\left(\left\|\sum_{n=N}^{N-1+m}\Phi_{\mu}^{(n)}(a)\right\|+\left\|\sum_{n=0}^{m-1}\Phi_{\mu}^{(n)}(a)\right\|\right)\leq \frac{2m}{N}\|a\|.$$
\end{proof}
\begin{lem}
    Let \mbox{$\varphi\in S_{\mu}(A)$}, $g\in \Gamma$ and $a\in A$. Let $a_N$ be as in Lemma~\ref{lem:bound.on.difference}. Then
    $$\varphi((\alpha_g(a_N)-a_N)^*(\alpha_g(a_N)-a_N))\xrightarrow[N\to\infty]{}0$$
    \label{lem:difference.goes.to.0}
\end{lem}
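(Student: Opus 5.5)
The plan is a variance identity for the unital completely positive map $\Phi_\mu^{(m)}$, evaluated in the stationary state $\varphi$ at $x=a_N$. The identity is then combined with the norm estimate of Lemma~\ref{lem:bound.on.difference}. Fix $g\in\Gamma$. Since $\mu$ is generating, so is $\check\mu$, and hence there is $m\geq1$ with $\check\mu^{*m}(g)>0$. I will bound the quantity for this $g$ by a constant times $1/N$.

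For $x\in A$, write $\Phi_\mu^{(m)}=\sum_h\check\mu^{*m}(h)\alpha_h$. Each $\alpha_h$ is a $*$-automorphism, so $\alpha_h(x)^*\alpha_h(x)=\alpha_h(x^*x)$. Expanding the square gives
\[
\sum_{h\in\Gamma}\check\mu^{*m}(h)\,\varphi\bigl((\alpha_h(x)-x)^*(\alpha_h(x)-x)\bigr)
=\varphi\bigl(\Phi_\mu^{(m)}(x^*x)\bigr)-\varphi\bigl(\Phi_\mu^{(m)}(x)^*x\bigr)-\varphi\bigl(x^*\Phi_\mu^{(m)}(x)\bigr)+\varphi(x^*x).
\]
All sums converge in norm, so $\varphi$ may be applied termwise. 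By stationarity, $\varphi\circ\Phi_\mu^{(m)}=\varphi$, so the first term equals $\varphi(x^*x)$. The right-hand side therefore becomes
\[
2\varphi(x^*x)-2\,\mathrm{Re}\,\varphi\bigl(x^*\Phi_\mu^{(m)}(x)\bigr)
=2\,\mathrm{Re}\,\varphi\bigl(x^*(x-\Phi_\mu^{(m)}(x))\bigr).
\]

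Now take $x=a_N$ with $N>m$. We have $\|a_N\|\leq\|a\|$, since $a_N$ is an average of contractions applied to $a$. Lemma~\ref{lem:bound.on.difference} gives $\|a_N-\Phi_\mu^{(m)}(a_N)\|\leq 2m\|a\|/N$. Hence the right-hand side is at most $4m\|a\|^2/N$. Every term on the left is nonnegative, so discarding all terms except $h=g$ yields
\[
\varphi\bigl((\alpha_g(a_N)-a_N)^*(\alpha_g(a_N)-a_N)\bigr)\leq\frac{4m\|a\|^2}{N\,\check\mu^{*m}(g)}\xrightarrow[N\to\infty]{}0 .
\]

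The only delicate point is reaching an arbitrary $g$ rather than only $g\in\mathrm{supp}(\check\mu)$. This is handled by passing to the convolution power $\check\mu^{*m}$, which is exactly why Lemma~\ref{lem:bound.on.difference} was stated for general $m$. The rest is the routine expansion above.
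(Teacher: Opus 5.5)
Your proposal is correct and follows the paper's proof essentially step for step. You choose $m$ with $\check\mu^{*m}(g)>0$ and expand the $\check\mu^{*m}$-weighted sum of squared differences. Stationarity then reduces it to $2\,\mathrm{Re}\,\varphi\bigl(a_N^*(a_N-\Phi_\mu^{(m)}(a_N))\bigr)$, which you bound by $4m\|a\|^2/N$ using Lemma~\ref{lem:bound.on.difference} before discarding all terms except $h=g$.
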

\begin{proof}
    Fix \mbox{$\varphi\in S_{\mu}(A)$}, $a\in A$, $g\in \Gamma$ and take 
    \mbox{$m\in\mathbb{N}$} such that $\check{\mu}^{*m}(g)>0$. Then
    \begin{align*}
&\check{\mu}^{*m}(g)
 \varphi\!\left(
 (\alpha_g(a_N)-a_N)^*(\alpha_g(a_N)-a_N)
 \right) \\
&\leq
\sum_{h\in \Gamma}\check{\mu}^{*m}(h)
\varphi\!\left(
(\alpha_h(a_N)-a_N)^*(\alpha_h(a_N)-a_N)
\right) \\
&=
\sum_{h\in \Gamma}\check{\mu}^{*m}(h)
\varphi\!\left(
\alpha_h(a_N^*a_N)+a_N^*a_N
-2\operatorname{Re}\!\left(a_N^*\alpha_h(a_N)\right)
\right) \\
&=
2\operatorname{Re}
\varphi\!\left(a_N^*\bigl(a_N-\Phi_{\mu}^{(m)}(a_N)\bigr)\right).
\end{align*}
    where we used stationarity in the last step.
    For $N>m$, we get that 
    \begin{align*}
0\leq &\check{\mu}^{*m}(g)
 \varphi\!\left(
 (\alpha_g(a_N)-a_N)^*(\alpha_g(a_N)-a_N)
 \right) \\
&\qquad\leq
2\left|
\varphi\!\left(a_N^*(a_N-\Phi_{\mu}^{(m)}(a_N))\right)
\right| \\
&\qquad\leq
2\left\|a_N^*(a_N-\Phi_{\mu}^{(m)}(a_N))\right\| \\
&\qquad\leq
\frac{4m}{N}\|a\|^2
\xrightarrow[N\to\infty]{}0.
\end{align*}
Since $\check\mu^{*m}(g)>0$, the conclusion follows.
\end{proof}
\begin{thm}\label{thm:stationaries.form.simplex}
Let $A$ be a unital separable $C^*$-algebra, let \mbox{$\Gamma\leq U(A)$} be a countable
subgroup that generates $A$ as a $C^*$-algebra and acts on $A$ by conjugation, and let
\mbox{$\mu\in\prob(\Gamma)$} be a generating measure. Then $S_\mu(A)$ is a Choquet
simplex.
\end{thm}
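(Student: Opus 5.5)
We verify the hypothesis of Batty's criterion (Proposition~\ref{prop:batty.criterion}): for every $\varphi\in S_\mu(A)$ and every $a,b\in A$ we show $\varphi([a_N,b])\to0$. First observe that $[a_N,b]$ is linear in $b$ and $\|a_N\|\leq\|a\|$ uniformly in $N$. So $|\varphi([a_N,b])-\varphi([a_N,b'])|\leq 2\|a\|\|b-b'\|$, and it suffices to prove the claim for $b$ in a norm-dense subset of $A$. Since $\Gamma$ generates $A$, the linear span of $\{u_g:g\in\Gamma\}$ is a dense $*$-subalgebra: it is closed under products and adjoints because $\Gamma$ is a group. Hence it suffices to treat $b=u_g$ for a single $g\in\Gamma$.

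For $b=u_g$ we rewrite the commutator using the inner action:
\[
[a_N,u_g]=a_Nu_g-u_ga_N=\bigl(a_N-u_ga_Nu_g^*\bigr)u_g=-\bigl(\alpha_g(a_N)-a_N\bigr)u_g .
\]
Set $x_N=\alpha_g(a_N)-a_N$. The Cauchy--Schwarz inequality for the state $\varphi$ gives
\[
|\varphi(x_Nu_g)|\leq\varphi(x_Nx_N^*)^{1/2}\varphi(u_g^*u_g)^{1/2}=\varphi(x_Nx_N^*)^{1/2}.
\]
This bound involves $x_Nx_N^*$ rather than $x_N^*x_N$, which is what Lemma~\ref{lem:difference.goes.to.0} controls. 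To fix this, write $\varphi(x_Nu_g)=\overline{\varphi(u_g^*x_N^*)}$ and apply Cauchy--Schwarz in the form $|\varphi(y^*z)|\leq\varphi(y^*y)^{1/2}\varphi(z^*z)^{1/2}$ with $y=u_g$ and $z=x_N^*$. That again produces $\varphi(x_Nx_N^*)$.

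The cleaner route is to apply Lemma~\ref{lem:difference.goes.to.0} to $a^*$ instead of $a$. Because $\Phi_\mu$ is $*$-preserving, $(a^*)_N=(a_N)^*$, so the lemma applied to $a^*$ yields
\[
\varphi\bigl((\alpha_g(a_N^*)-a_N^*)^*(\alpha_g(a_N^*)-a_N^*)\bigr)=\varphi(x_Nx_N^*)\longrightarrow0 .
\]
Therefore $\varphi([a_N,u_g])\to0$. By linearity and the density reduction above, $\varphi([a_N,b])\to0$ for all $b\in A$, and Batty's criterion gives that $S_\mu(A)$ is a Choquet simplex.

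The only delicate point is matching the order of the product in the Cauchy--Schwarz estimate with the quantity controlled by Lemma~\ref{lem:difference.goes.to.0}. This is handled by the adjoint trick, together with the observation that the Cesàro averaging commutes with the involution. Everything else, namely the density reduction and the uniform bound $\|a_N\|\leq\|a\|$, is routine.
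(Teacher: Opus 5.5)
Your proof is correct and follows the paper's route: Batty's criterion, a density reduction to $b=u_g$, and Lemma~\ref{lem:difference.goes.to.0} combined with Cauchy--Schwarz. The only difference is cosmetic. The paper factors $u_g$ on the left, $[a_N,u_g]=u_g\bigl(\alpha_{g^{-1}}(a_N)-a_N\bigr)$, so the estimate $|\varphi([a_N,u_g])|^2\leq\varphi([a_N,u_g]^*[a_N,u_g])$ lands directly on the quantity controlled by the lemma (applied with $g^{-1}$), which makes your adjoint trick unnecessary.
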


\begin{proof}
   
    By Proposition~\ref{prop:batty.criterion}, it is enough to prove that for every $a,b\in A$ and \mbox{$\varphi\in S_{\mu}(A)$}, 
    \[\varphi([a_N,b])\xrightarrow[N\to\infty]{}0.\]
Let \mbox{$\varphi\in S_{\mu}(A)$} and $a\in A$. We first prove that for every $g\in \Gamma$,
\mbox{$\varphi([a_N,u_g])\xrightarrow[N\to\infty]{}0$}. Observe that
    \[
    [a_N,u_g]=a_Nu_g-u_ga_N
    =u_g\bigl(\alpha_{g^{-1}}(a_N)-a_N\bigr),
    \]
    and therefore by Lemma \ref{lem:difference.goes.to.0},
    \[
    |\varphi([a_N,u_g])|^2
    \leq \varphi([a_N,u_g]^*[a_N,u_g])
    =\varphi\!\left((\alpha_{g^{-1}}(a_N)-a_N)^*
      (\alpha_{g^{-1}}(a_N)-a_N)\right)
    \xrightarrow[N\to\infty]{}0.
    \]
 Now if \mbox{$\displaystyle b=\sum_{i=1}^n \alpha_iu_{g_i}$}, by the triangle inequality 
 \mbox{$\varphi([a_N,b])\xrightarrow[N\to\infty]{}0$}.
 Lastly, let $b\in A$ be arbitrary, and let $\varepsilon>0$. Take \mbox{$\displaystyle c=\sum_{i=1}^n\alpha_iu_{g_i}$} such that \mbox{$\|b-c\|<\frac{\varepsilon}{2\|a\|+1}$}. Then
\begin{align*}
0
&\leq \left|\varphi([a_N,b])\right| \\
&\leq \left|\varphi([a_N,b-c])\right|
   +\left|\varphi([a_N,c])\right| \\
&\leq 2\|a_N\|\,\|b-c\|
   +\left|\varphi([a_N,c])\right| \\
&\leq 2\|a\|\,\|b-c\|
   +\left|\varphi([a_N,c])\right| \\
&< \varepsilon+\left|\varphi([a_N,c])\right|.
\end{align*}
Taking $\limsup$ of both sides, we see that \mbox{$|\varphi([a_N,b])|\xrightarrow[N\to\infty]{}0$}.
\end{proof}

\begin{example}
Let $\Gamma$ be a countable discrete group, let \mbox{$\pi\colon\Gamma\to U(H)$} be a unitary
representation, and let
\[
 A=C^*_{\pi}(\Gamma)=C^*(\{\pi(g):g\in\Gamma\}).
\]
The group $\Gamma$ acts on $A$ by
\[
 \alpha_g(a)=\pi(g)a\pi(g)^*.
\]
By Theorem~\ref{thm:stationaries.form.simplex}, $S_\mu(A)$ is a Choquet simplex.
\end{example}
The following finite-dimensional example shows that the three hypotheses - namely that the action is inner, $\Gamma$ generates $A$ as a $C^*$-algebra and $\mu$ is generating - cannot, in general, be omitted.
\begin{example}
    Let $A=M_2(\mathbb{C})$. It is easy to verify that $S(A)$ is affinely homeomorphic to
    $$K=\{B\in M_
    2(\mathbb{C})_{\mathrm{sa}}\colon B\geq 0,\,\mathrm{tr}(B)=1\}$$
    via $B\to \varphi_B$, where $\varphi_B(C)=\mathrm{tr}(B^TC)$. The inverse is given by $\displaystyle\varphi\to (\varphi(E_{i,j}))_{i,j=1}^2$, with $(E_{i,j})_{i,j=1}^2$ being the matrix-units in $A$. Moreover,
    $$\partial_eK=\left\{P\in K:P \text{ is a  projection}\right\}.$$
    In particular, $K$ is not a Choquet simplex, since
    $$\frac{1}{2}I_2=\frac{1}{2}\begin{pmatrix}
        1&0
        \\
        0&0
    \end{pmatrix}+\frac{1}{2}\begin{pmatrix}
        0&0
        \\
        0&1
    \end{pmatrix}=\frac{1}{2}\begin{pmatrix}
        \frac{1}{2}&\frac{1}{2}\\\frac{1}{2}&\frac{1}{2}
    \end{pmatrix}+\frac{1}{2}\begin{pmatrix}
        \frac{1}{2}&-\frac{1}{2}\\-\frac{1}{2}&\frac{1}{2}
    \end{pmatrix},$$
    that is, $\frac{1}{2}I_2$ can be written as two distinct convex combinations of the extreme points of $K$. 
    \begin{itemize}
        \item If $\Gamma=\langle u,v\rangle$ with $u,v$ a pair of unitaries that generate $A$, and the action of $\Gamma$ on $A$ is the trivial action, then $S_{\mu}(A)=S(A)$ for every generating probability measure $\mu\in\prob(\Gamma)$. In particular, it is not a Choquet simplex.
        \item  If $\Gamma=\{I_2\}$ and $\Gamma$ acts on $A$ by inner automorphisms, $S_{\mu}(A)=S(A)$ for every generating probability measure $\mu\in\prob(\Gamma)$, and again $S_{\mu}(A)$ is not a Choquet simplex.
        \item If $\Gamma=\langle u,v\rangle$ for a pair of unitaries that generate $A$, and the action on $A$ is by inner automorphisms, taking $\mu=\delta_{I_2}\in\prob(\Gamma)$, gives us $S_{\mu}(A)=S(A)$, and therefore $S_{\mu}(A)$ is not a Choquet simplex.
    \end{itemize}
\end{example}
It can be the case, however, that the stationaries form a simplex for other group actions. The following is inspired by \cite[Corollary 4.2]{slutsky2026invariant}.
\begin{thm}
    Let $I$ be a countable set with a $\Gamma$-action such that the following conditions hold:
    \begin{enumerate}
        \item There are $C^*$-subalgebras $A_F\subset A$ indexed by finite subsets of $I$ such that
        \[
        A_{\mathrm{loc}}=\bigcup_{F\subset I\text{ finite}}A_F
        \]
        is a norm-dense $*$-subalgebra of $A$.
        \item Every $\Gamma$-orbit of $I$ is infinite.
        \item \mbox{$\alpha_g(A_F)=A_{gF}$}.
        \item \mbox{$F\cap K=\emptyset\Rightarrow [A_F,A_K]=\{0\}$}.
    \end{enumerate}
    Then $S_{\mu}(A)$ is a Choquet simplex.
    \label{thm:Choquet.simplex.finite.sets.}
\end{thm}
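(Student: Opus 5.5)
The plan is to apply Proposition~\ref{prop:batty.criterion}, so it suffices to show that $\varphi([a_N,b])\to0$ for all $a,b\in A$ and every $\varphi\in S_\mu(A)$. In fact I expect to prove the stronger statement $\|[a_N,b]\|\to0$, without using $\varphi$ at all.

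\textbf{Reduction to local elements.} The map $a\mapsto a_N$ is linear and contractive, since each $\Phi_\mu^{(n)}$ is. It follows that
\[
\|[a_N,b]-[a'_N,b']\|\leq 2\|a-a'\|\,\|b\|+2\|a'\|\,\|b-b'\|.
\]
Because $A_{\mathrm{loc}}$ is norm dense, an $\varepsilon/3$ argument like the one in the proof of Theorem~\ref{thm:stationaries.form.simplex} lets me assume $a\in A_F$ and $b\in A_K$ for some finite sets $F,K\subset I$.

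\textbf{Commutator estimate.} Expanding $a_N$ gives
\[
[a_N,b]=\frac1N\sum_{n=0}^{N-1}\sum_{g\in\Gamma}\check\mu^{*n}(g)\,[\alpha_g(a),b].
\]
By (3) we have $\alpha_g(a)\in A_{gF}$. By (4), $[\alpha_g(a),b]=0$ whenever $gF\cap K=\emptyset$. Set $E=\{g\in\Gamma: gF\cap K\neq\emptyset\}$. Then
\[
\|[a_N,b]\|\leq 2\|a\|\|b\|\cdot\frac1N\sum_{n=0}^{N-1}\check\mu^{*n}(E).
\]
The set $E$ is the finite union over $x\in F$ and $y\in K$ of the sets $E_{x,y}=\{g:gx=y\}$. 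So it suffices to show that for fixed $x,y$ in the same orbit $O=\Gamma x$,
\[
\frac1N\sum_{n<N}\check\mu^{*n}(E_{x,y})\to0.
\]
Now $\check\mu^{*n}(E_{x,y})=P^n(x,y)$, where $P$ is the Markov chain on $O$ given by $P(z,w)=\sum_{g:gz=w}\check\mu(g)$. This chain is irreducible because $\check\mu$ is generating, and its state space $O$ is infinite by (2).

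\textbf{Main step: Cesàro decay for the induced chain.} This is where the real work lies. For an irreducible chain, the Cesàro averages of $P^n(x,y)$ converge to $1/\mathbb E_y[\text{return time}]$. That limit is positive only if the chain is positive recurrent, and positive recurrence forces the existence of a stationary probability measure $\nu$ on $O$. So I would rule out such a $\nu$ by a maximum principle.

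Suppose $\nu$ were such a measure. Since $\nu$ is summable on the countable set $O$, its maximum value is attained on a finite nonempty set $M$. Stationarity reads
\[
\nu(w)=\sum_g\check\mu(g)\,\nu(g^{-1}w).
\]
This expresses each value $\nu(w)$ as an average of values of $\nu$, so for every $w\in M$ and every $g\in\mathrm{supp}(\check\mu)$ we get $g^{-1}w\in M$. Hence $g^{-1}M\subseteq M$, and since $g^{-1}$ is injective and $M$ is finite, in fact $g^{-1}M=M$. Therefore $M$ is invariant under the group generated by $\mathrm{supp}(\check\mu)$, which is all of $\Gamma$. This contradicts transitivity on the infinite orbit $O$.

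Consequently the chain is null recurrent or transient, and in either case the Cesàro averages of $P^n(x,y)$ tend to $0$. Plugging this into the commutator estimate gives $\|[a_N,b]\|\to0$, and Batty's criterion then shows that $S_\mu(A)$ is a Choquet simplex. I expect only the bookkeeping in the maximum principle, namely $\mu$ versus $\check\mu$ and left versus right, to need care.
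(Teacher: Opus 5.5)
Your proof is correct. Up to the commutator estimate it is the paper's argument: the same reduction to local elements, the same set $\{g: gF\cap K\neq\emptyset\}$, and the same bound $\|[a_N,b]\|\leq 2\|a\|\|b\|\frac1N\sum_{n<N}\check\mu^{*n}(\cdot)$.

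The routes diverge in the decay step. The paper writes $\check\mu^{*n}(\{g:gi=j\})$ as the matrix coefficient $\langle Q^ne_i,e_j\rangle$ of the contraction $Q=\sum_g\check\mu(g)\beta_g$ on $\ell^2(I)$ and applies the von Neumann mean ergodic theorem. It then shows $\mathrm{Fix}(Q)=\{0\}$: a vector fixed by an average of unitaries is fixed by each of them, so it is constant on orbits, and it must vanish because it is square-summable and the orbits are infinite. You treat the same numbers as transition probabilities $P^n(x,y)$ of the induced irreducible chain on an orbit. The Markov-chain ergodic theorem (Ces\`aro limit $1/\mathbb{E}_y[\text{return time}]$) reduces the question to excluding positive recurrence. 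You then exclude a stationary probability with a maximum principle, which is the $\ell^1$ counterpart of the paper's $\ell^2$ rigidity argument.

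The paper's route is more self-contained. It needs only the Hilbert-space mean ergodic theorem, treats all of $I$ at once, and never classifies the chain. Yours rests on heavier probabilistic input (renewal theory, and positive recurrence $\Leftrightarrow$ existence of a stationary distribution). In exchange it isolates the obstruction conceptually: decay fails exactly when an orbit carries a stationary probability, and that forces a finite $\Gamma$-invariant set.

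If you want to avoid renewal theory, take a pointwise limit point $\nu$ of $\frac1N\sum_{n<N}\delta_xP^n$ along a subsequence realizing $\limsup$ of the averages at $y$. Fatou gives $\nu P\leq\nu$, and stochasticity of $P$ gives equal total mass, so $\nu P=\nu$. Moreover $\nu\neq0$ whenever the Ces\`aro averages at $y$ do not vanish, so after normalizing, $\nu$ feeds directly into your maximum principle.
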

\begin{proof}
    Again using Proposition~\ref{prop:batty.criterion}, it is enough to prove that
    \[
        \|[a_N,b]\|\xrightarrow[N\to\infty]{}0
        \qquad\text{for every }a,b\in A.
    \]
    First, fix \mbox{$a,b\in A_{\mathrm{loc}}$} and take finite sets $F,K\subset I$ such that
    $a\in A_F$ and $b\in A_K$. Define
    $$B_{F,K}=\{g\in \Gamma\colon gF\cap K\neq\emptyset\}\subset \Gamma.$$
 Since \(\alpha_g(a)\) and \(b\) commute whenever \mbox{\(gF\cap K=\varnothing\)},
    we have, for every \(n\in\mathbb N\),
    \begin{align*}
        \lVert[\Phi_\mu^{(n)}(a),b]\rVert
        &= \left\lVert
            \sum_{g\in\Gamma}
            \check\mu^{*n}(g)[\alpha_g(a),b]
        \right\rVert                                                     \\
        &= \left\lVert
            \sum_{g\in B_{F,K}}
            \check\mu^{*n}(g)[\alpha_g(a),b]
        \right\rVert                                                     \\
        &\leq
        \sum_{g\in B_{F,K}}
        \check\mu^{*n}(g)\,
        \lVert[\alpha_g(a),b]\rVert                                      \\
        &\leq
        2\lVert a\rVert\lVert b\rVert\,
        \check\mu^{*n}(B_{F,K}).
    \end{align*}
    Consequently,
    \begin{equation}\label{eq:commutator-bound}
        \lVert[a_N,b]\rVert
        \leq
        2\lVert a\rVert\lVert b\rVert
        \frac{1}{N}\sum_{n=0}^{N-1}\check\mu^{*n}(B_{F,K}).
    \end{equation}
    It therefore suffices to prove that
    \[
        \frac{1}{N}\sum_{n=0}^{N-1}\check\mu^{*n}(B_{F,K})
        \xrightarrow[N\to\infty]{}0.
    \]    
 For this, let $\ell^2(I)$ be the Hilbert space of square-summable complex-valued sequences indexed by $I$, and let $e_i$ denote the element satisfying \mbox{$(e_i)_j=\delta_{i,j}$}. Observe that $g\in B_{F,K}$ if and only if \mbox{$\langle e_{g.i},e_j\rangle\neq 0$} for some $i\in F,\,j\in K$. Therefore,
    \[
    \check\mu^{*n}(B_{F,K})
    =\sum_{g\in B_{F,K}}\check\mu^{*n}(g)
    \leq \sum_{\substack{i\in F\\j\in K}}\sum_{g\in\Gamma}
       \check\mu^{*n}(g)\langle e_{gi},e_j\rangle.
    \]
    Let $\beta_g$ be the unitary on $\ell^2(I)$ defined by \mbox{$\beta_g(e_i)=e_{gi}$}, and define the contraction $Q$ on $\ell^2(I)$ by
    \[
    Q=\sum_{g\in\Gamma}\check\mu(g)\beta_g.
    \]
    It is straightforward to verify that for every \mbox{$n\in\mathbb{N}$},
    \[Q^n=\sum_{g\in\Gamma}\check\mu^{*n}(g)\beta_g,\]
    and thus by definition
    \[\check\mu^{*n}(B_{F,K})\leq
      \sum_{\substack{i\in F\\ j\in K}}\langle Q^n e_i,e_j\rangle.\]
    In particular, 
    \begin{align*}
        \frac{1}{N}\sum_{n=0}^{N-1}\check\mu^{*n}(B_{F,K})
        &\leq
        \sum_{\substack{i\in F\\ j\in K}}
        \langle \frac{1}{N}\sum_{n=0}^{N-1}Q^ne_i,e_j\rangle                                    \\
        &\leq
        |K||F|\max_{i\in F}\left\|\frac{1}{N}\sum_{n=0}^{N-1}Q^ne_i\right\|.
    \end{align*}
    By the mean ergodic theorem, for every $i\in I$,
    \[
        \frac{1}{N}\sum_{n=0}^{N-1}Q^ne_i
        \xrightarrow[N\to\infty]{}
        P_{\operatorname{Fix}(Q)}e_i
    \]
    where \(P_{\operatorname{Fix}(Q)}\) is the orthogonal projection onto
    \[
        \operatorname{Fix}(Q)
        \coloneqq
        \{\xi\in\ell^2(I):Q\xi=\xi\}.
    \]
Lastly, we prove that $Fix(Q)=\{0\}$. Observe that, since every $\beta_g$ is unitary, if $\xi$ is a fixed point of $Q$, then
    \begin{align*}
        \sum_{g\in\Gamma}
        \check\mu(g)\lVert\beta_g\xi-\xi\rVert^2
        &=
        \sum_{g\in\Gamma}\check\mu(g)
        \left(
            \lVert\beta_g\xi\rVert^2
            -2\operatorname{Re}\langle\beta_g\xi,\xi\rangle
            +\lVert\xi\rVert^2
        \right)                                                         \\
        &=
        2\lVert\xi\rVert^2
        -2\operatorname{Re}
        \left\langle
            \sum_{g\in\Gamma}\check\mu(g)\beta_g\xi,\xi
        \right\rangle                                                   \\
        &=
        2\lVert\xi\rVert^2
        -2\operatorname{Re}\langle Q\xi,\xi\rangle                       \\
        &=0.
    \end{align*}
    and therefore \mbox{$\beta_g(\xi)=\xi$} for every \mbox{$g\in\mathrm{supp}(\check\mu)$}. Since $\mu$ is generating, $\xi$ is a fixed point of every $\beta_g$.
    Now let $\xi\in\mathrm{Fix}(Q)$ and assume that there exists $j\in I$ such that \mbox{$\xi_j\neq 0$}. Since $\xi$ is fixed by $\beta_g$ for all $g\in \Gamma$, \mbox{$\xi_{gj}=\xi_j$} for every $g\in \Gamma$. Since the orbit $\Gamma j$ is infinite by assumption, this implies that $\xi$ has infinitely many coordinates equal to \mbox{$\xi_j\neq 0$}, and thus
    \[
    \sum_{i\in I}|\xi_i|^2\geq \sum_{i\in\Gamma j}|\xi_i|^2=\infty,
    \]
    contradicting the fact that \mbox{$\xi\in\ell^2(I)$}. Therefore $\mathrm{Fix}(Q)=\{0\}$, and
    \[
    \frac{1}{N}\sum_{n=0}^{N-1}\check\mu^{*n}(B_{F,K})
    \xrightarrow[N\to\infty]{}0,
    \]
concluding the first part. A standard density argument gives the result for every $a,b\in A$. Therefore, for every \mbox{$\varphi\in S_{\mu}(A)$} and every $a,b\in A$,
$$\varphi([a_N,b])\xrightarrow[N\to\infty]{}0,$$
and $S_{\mu}(A)$ is a Choquet simplex.
\end{proof}
\begin{example}
    Let $D$ be a separable unital $C^*$-algebra, let $\Gamma$ be an infinite countable discrete group, let $I=\Gamma$ with the left regular action, and let
    \[
    A=\bigotimes_{i\in\Gamma}^{\min}D
    \]
    be the spatial infinite tensor product with respect to the unit of $D$. Let $\Gamma$ act on $A$ via the induced Bernoulli shift. For a finite set $F\subset I$, let $A_F$ be the copy of \mbox{$\bigotimes_{i\in F}^{\min}D$} inside $A$. Then the algebras $A_F$ and the action on $A$ satisfy the conditions of Theorem~\ref{thm:Choquet.simplex.finite.sets.}, and therefore $S_{\mu}(A)$ is a Choquet simplex.
\end{example}
\section{Extreme stationary states and factor representations}
Assume now that $\Gamma\leq U(A)$ is a discrete subgroup generating $A$ as a $C^*$ algebra and acts on $A$ by conjugation. Fix
$\varphi\in S_\mu(A)$. Recall that \mbox{$\alpha_g(a)=u_gau_g^*$}, and put
\[
    U_g=\pi_\varphi(u_g),\qquad g\in\Gamma.
\]
The canonical normal extension of $\alpha_g$ to $M_\varphi$ is
\[
    \widetilde\alpha_g=\operatorname{Ad}(U_g),
    \qquad
    \widetilde\alpha_g(x)=U_gxU_g^*,
    \qquad x\in M_\varphi.
\]
Indeed,
\[
    \widetilde\alpha_g(\pi_\varphi(a))
    =U_g\pi_\varphi(a)U_g^*
    =\pi_\varphi(u_gau_g^*)
    =\pi_\varphi(\alpha_g(a)),
    \qquad a\in A.
\]
Thus \mbox{$(\widetilde\alpha_g)_{g\in\Gamma}$} is an action by normal automorphisms of
$M_\varphi$ extending the given action on $A$. 
Corresponding to the convention
\mbox{$\Phi_\mu=\sum_{g\in\Gamma}\mu(g)\alpha_{g^{-1}}$}, define
\[
    T_\varphi(x)
    =\sum_{g\in\Gamma}\mu(g)\widetilde\alpha_{g^{-1}}(x)
    =\sum_{g\in\Gamma}\mu(g)U_g^*xU_g
    =\sum_{g\in\Gamma}\check\mu(g)\widetilde\alpha_g(x).
\]
The sums converge in norm for every $x\in M_\varphi$, and $T_\varphi$ is a normal
unital completely positive map satisfying
\[
    T_\varphi(\pi_\varphi(a))
    =\pi_\varphi(\Phi_\mu(a)),
    \qquad a\in A.
\]
\begin{prop}\label{prop:normal.stationarity}
The canonical normal extension $\omega_\varphi$ of $\varphi$ is $T_\varphi$-stationary;
that is,
\[
    \omega_\varphi\circ T_\varphi=\omega_\varphi.
\]
\end{prop}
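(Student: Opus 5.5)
The plan is to verify the identity first on the weakly dense $*$-subalgebra $\pi_\varphi(A)$ and then extend it to $M_\varphi$ by normality. Both $\omega_\varphi$ and $\omega_\varphi\circ T_\varphi$ are normal positive functionals on $M_\varphi$. The first is a vector state. For the second, $T_\varphi$ is normal, since it is a norm-convergent sum of the normal maps $x\mapsto \mu(g)U_g^*xU_g$. Concretely,
\[
    \omega_\varphi(T_\varphi(x))=\sum_{g\in\Gamma}\mu(g)\langle xU_g\xi_\varphi,U_g\xi_\varphi\rangle ,
\]
and this sum converges in norm in the predual because $\sum_g\mu(g)=1$ and each summand has norm at most $\mu(g)$. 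Hence it is a normal functional.

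On the subalgebra the identity is a direct consequence of stationarity. For $a\in A$,
\[
    \omega_\varphi\bigl(T_\varphi(\pi_\varphi(a))\bigr)
    =\omega_\varphi\bigl(\pi_\varphi(\Phi_\mu(a))\bigr)
    =\varphi(\Phi_\mu(a))
    =\varphi(a)
    =\omega_\varphi(\pi_\varphi(a)).
\]
Here the first equality is the intertwining relation $T_\varphi\circ\pi_\varphi=\pi_\varphi\circ\Phi_\mu$ recorded above, which follows from $U_g^*\pi_\varphi(a)U_g=\pi_\varphi(u_g^*au_g)$ and norm convergence of both series. The third equality is $\varphi\circ\Phi_\mu=\varphi$.

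To extend, note that two normal functionals agreeing on a $*$-algebra $B$ agree on its $\sigma$-weak closure. By the bicommutant theorem, the $\sigma$-weak closure of $\pi_\varphi(A)$ is $\pi_\varphi(A)''=M_\varphi$, since $\pi_\varphi(A)$ is a unital (hence nondegenerate) $*$-algebra. Therefore $\omega_\varphi\circ T_\varphi=\omega_\varphi$ on all of $M_\varphi$.

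The only point needing care is the normality of $\omega_\varphi\circ T_\varphi$. This is handled by the predual-norm convergence above, or alternatively by Kaplansky density with a bounded net from $\pi_\varphi(A)$ converging strongly, combined with the strong continuity of each $\mathrm{Ad}(U_g^*)$ and dominated convergence in $g$. With either argument in place, the rest is routine.
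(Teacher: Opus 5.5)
Your proposal is correct and follows the paper's proof exactly. You check $\omega_\varphi\circ T_\varphi=\omega_\varphi$ on $\pi_\varphi(A)$ via $T_\varphi\circ\pi_\varphi=\pi_\varphi\circ\Phi_\mu$ and $\varphi\circ\Phi_\mu=\varphi$, then extend by normality of both functionals and ultraweak density of $\pi_\varphi(A)$ in $M_\varphi$; your predual-norm argument for the normality of $\omega_\varphi\circ T_\varphi$ makes explicit a point the paper states without comment.
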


\begin{proof}
For every $a\in A$,
\[
    (\omega_\varphi\circ T_\varphi)(\pi_\varphi(a))
    =\varphi(\Phi_\mu(a))
    =\varphi(a)
    =\omega_\varphi(\pi_\varphi(a)).
\]
Both functionals are normal, so the conclusion follows from the ultraweak density of
$\pi_\varphi(A)$ in $M_\varphi$.
\end{proof}
We write
\[
    \operatorname{Fix}(T_\varphi)
    =\{x\in M_\varphi:T_\varphi(x)=x\}
\]
and
\[
    M_\varphi^\Gamma
    =\{x\in M_\varphi:\widetilde\alpha_g(x)=x
      \text{ for every }g\in\Gamma\}.
\]
Finally, let
\[
    E_N=\frac1N\sum_{n=0}^{N-1}T_\varphi^n.
\]
We use the mean ergodic result in \cite[Theorem~3.3]{Izumi-boundaries}: a subnet
of $(E_N)$ converges in the point-weak operator topology to a positive unital
projection
\[
    E\colon M_\varphi\longrightarrow\operatorname{Fix}(T_\varphi).
\]
We fix such a subnet and denote its limit by $E$. Since
\mbox{$\omega_\varphi\circ E_N=\omega_\varphi$} for every $N$, one also has
\mbox{$\omega_\varphi\circ E=\omega_\varphi$}.
\begin{lem}\label{lem:separating}
The GNS vector $\xi_{\varphi}$ is separating for $M_{\varphi}$. Equivalently, $\omega_{\varphi}$ is a
faithful normal state on $M_{\varphi}$.
\label{lem:faithful.normal.state.on.vN}
\end{lem}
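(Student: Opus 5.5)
The plan is to show directly that $x\xi_\varphi=0$ forces $x=0$ for $x\in M_\varphi$. The idea is to use stationarity of $\omega_\varphi$ under $T_\varphi$ (Proposition~\ref{prop:normal.stationarity}) to move the vanishing of $x$ from the single vector $\xi_\varphi$ to all the vectors $U_g\xi_\varphi$, and then use that these span a dense subspace of $H_\varphi$. Separating is equivalent to $\omega_\varphi$ being faithful, since $\omega_\varphi(x^*x)=\|x\xi_\varphi\|^2$; so this is all that is needed.

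\emph{Step 1: propagate the vanishing along random-walk supports.} Let $x\in M_\varphi$ with $x\xi_\varphi=0$, and put $y=x^*x\geq0$, so that $\omega_\varphi(y)=0$. Iterating Proposition~\ref{prop:normal.stationarity} gives $\omega_\varphi\circ T_\varphi^n=\omega_\varphi$ for every $n$. Since $T_\varphi^n(y)=\sum_{g}\mu^{*n}(g)U_g^*yU_g$, we get
\[
0=\omega_\varphi(y)=\sum_{g\in\Gamma}\mu^{*n}(g)\,\langle yU_g\xi_\varphi,U_g\xi_\varphi\rangle=\sum_{g\in\Gamma}\mu^{*n}(g)\,\|xU_g\xi_\varphi\|^2 .
\]
Every term is nonnegative, so $xU_g\xi_\varphi=0$ whenever $\mu^{*n}(g)>0$.

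\emph{Step 2: cover all of $\Gamma$.} Because $\mu$ is generating, $\mathrm{supp}(\mu)$ generates $\Gamma$ as a semigroup. Hence $\bigcup_{n\ge1}\mathrm{supp}(\mu^{*n})=\Gamma$, and with the $n=0$ term we get $xU_g\xi_\varphi=0$ for every $g\in\Gamma$. This is the one place where the generating hypothesis on $\mu$ is essential.

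\emph{Step 3: density.} Since $\Gamma\leq U(A)$ is a group generating $A$, the linear span of $\{u_g\}$ is a dense $*$-subalgebra of $A$; it is closed under products and adjoints because $\Gamma$ is a group. Therefore $\mathrm{span}\{U_g\xi_\varphi\}=\pi_\varphi(\mathrm{span}\{u_g\})\xi_\varphi$ is dense in $\overline{\pi_\varphi(A)\xi_\varphi}=H_\varphi$. As $x$ is bounded and vanishes on this dense subspace, $x=0$.

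I do not expect a serious obstacle. The only points needing care are the positivity in Step~1, which lets each term of the sum be dropped individually, and ensuring that the union of the supports of the convolution powers really is all of $\Gamma$.
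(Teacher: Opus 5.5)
Your proposal is correct and follows the paper's proof essentially step for step. Stationarity of $\omega_\varphi$ under $T_\varphi^n$ turns $\omega_\varphi(x^*x)=0$ into $\sum_h\mu^{*n}(h)\|xU_h\xi_\varphi\|^2=0$; the generating hypothesis on $\mu$ then spreads $xU_g\xi_\varphi=0$ to all $g\in\Gamma$; and cyclicity of $\xi_\varphi$ together with density of $\mathrm{span}\{u_g\}$ forces $x=0$. Your check that $T_\varphi^n(y)=\sum_g\mu^{*n}(g)U_g^*yU_g$, and that the supports of the convolution powers exhaust $\Gamma$, only makes explicit what the paper uses implicitly.
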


\begin{proof}
Suppose that \mbox{$x\in M_{\varphi}$} satisfies \mbox{$x\xi_{\varphi}=0$}. By definition, \mbox{$\omega_{\varphi}(x^*x)=0$}.
For every $n\geq 1$, stationarity gives
\begin{align*}
    0
    &=\omega_{\varphi}(x^*x)\\
    &=\sum_{h\in \Gamma}\mu^{*n}(h)
      \omega_{\varphi}(\tilde\alpha_{h^{-1}}(x^*x))\\
    &=\sum_{h\in \Gamma}\mu^{*n}(h)\|xU_h\xi_{\varphi}\|^2.
\end{align*}
Since all the summands are nonnegative, \mbox{$xU_h\xi_{\varphi}=0$}
whenever $\mu^{*n}(h)>0$. By our assumption, $\operatorname{supp}\mu$ generates
$\Gamma$ as a semigroup, and thus \mbox{$xU_g\xi_{\varphi}=0$} for every $g\in \Gamma$.
Moreover,
\[
    \overline{\operatorname{span}}
    \{U_h\xi_{\varphi}:h\in \Gamma\}=H_{\varphi},
\]
because $\Gamma$ generates $A$ and $\xi_{\varphi}$ is cyclic for $\pi_{\varphi}(A)$. Thus $x=0$,
so $\xi_{\varphi}$ is separating for $M_{\varphi}$.
\end{proof}

\begin{lem}\label{lem:fixed-points}
Let $\mathrm{Fix}(T_{\varphi})$ denote the fixed-point space of $T_{\varphi}$. Then \mbox{$\mathrm{Fix}(T_{\varphi})=Z(M_{\varphi})$}.
\end{lem}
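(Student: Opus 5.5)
The inclusion $Z(M_\varphi)\subset\mathrm{Fix}(T_\varphi)$ is immediate. If $z\in Z(M_\varphi)$, then $z$ commutes with every $U_g\in M_\varphi$, so $U_g^*zU_g=z$. Hence $T_\varphi(z)=\sum_g\mu(g)z=z$. The substance is the reverse inclusion. The plan is to show that a $T_\varphi$-fixed element is actually fixed by every $\widetilde\alpha_g$, and then use that $\Gamma$ generates $A$.

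Fix $x\in\mathrm{Fix}(T_\varphi)$. I will mimic the computation of Lemma~\ref{lem:difference.goes.to.0} at the von Neumann level, using Proposition~\ref{prop:normal.stationarity}. Consider
\[
\sum_{g\in\Gamma}\mu(g)\,\omega_\varphi\bigl((\widetilde\alpha_{g^{-1}}(x)-x)^*(\widetilde\alpha_{g^{-1}}(x)-x)\bigr).
\]
Expanding and using $\omega_\varphi\circ T_\varphi=\omega_\varphi$, the term $\sum_g\mu(g)\omega_\varphi(\widetilde\alpha_{g^{-1}}(x^*x))$ equals $\omega_\varphi(x^*x)$. The cross terms become $\omega_\varphi(x^*T_\varphi(x))=\omega_\varphi(x^*x)$ and its adjoint. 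So the whole sum equals $2\omega_\varphi(x^*x)-2\operatorname{Re}\omega_\varphi(x^*x)=0$. Each summand is nonnegative, so $\omega_\varphi(y_g^*y_g)=0$ with $y_g=\widetilde\alpha_{g^{-1}}(x)-x$ for every $g\in\operatorname{supp}\mu$. By Lemma~\ref{lem:separating}, $\omega_\varphi$ is faithful, hence $y_g=0$. Thus $\widetilde\alpha_{g^{-1}}(x)=x$ for all $g\in\operatorname{supp}\mu$.

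Since $\operatorname{supp}\mu$ generates $\Gamma$ as a semigroup, we get $U_g^*xU_g=x$ for all $g\in\Gamma$, and so $x$ commutes with every $U_g$. The $U_g$ generate $\pi_\varphi(A)$ as a $C^*$-algebra, because $\Gamma$ generates $A$. Hence $x$ commutes with $\pi_\varphi(A)$, and by weak closure with $\pi_\varphi(A)''=M_\varphi$. Therefore $x\in M_\varphi\cap M_\varphi'=Z(M_\varphi)$. Along the way this also gives $\mathrm{Fix}(T_\varphi)=M_\varphi^\Gamma$.

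The main obstacle is the expansion step. It needs $\omega_\varphi\circ T_\varphi=\omega_\varphi$ applied to $x^*x$, which holds by Proposition~\ref{prop:normal.stationarity}. It also needs faithfulness to upgrade the vanishing of $\omega_\varphi(y_g^*y_g)$ to $y_g=0$. Note that the fixed element $x$ need not satisfy $T_\varphi(x^*x)=x^*x$. Nothing of that kind is required, because only stationarity of the state is used, not any multiplicativity of $T_\varphi$.
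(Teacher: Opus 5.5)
Your proposal is correct and follows the paper's proof essentially step for step. You expand $\sum_g\mu(g)\,\omega_\varphi\bigl((\widetilde\alpha_{g^{-1}}(x)-x)^*(\widetilde\alpha_{g^{-1}}(x)-x)\bigr)$ using $\omega_\varphi\circ T_\varphi=\omega_\varphi$ (the paper writes the same sum with $\check\mu$ and $\widetilde\alpha_g$), use faithfulness to get $\mathrm{Fix}(T_\varphi)=M_\varphi^\Gamma$, and then use semigroup generation together with $\{U_g:g\in\Gamma\}''=M_\varphi$ to identify $M_\varphi^\Gamma$ with $Z(M_\varphi)$.
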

\begin{proof}
We first show that \mbox{$\mathrm{Fix}(T_{\varphi})=M_{\varphi}^\Gamma$},
where
\[
    M_{\varphi}^\Gamma
    =\{x\in M_{\varphi}:\tilde{\alpha}_g(x)=x\text{ for every }g\in \Gamma\}.
\]
Let \mbox{$x\in M_{\varphi}$} satisfy $T_{\varphi}(x)=x$. Using \mbox{$\omega_{\varphi}\circ T_{\varphi}=\omega_{\varphi}$}, we obtain
\begin{align*}
    \sum_{g\in \Gamma}\check\mu(g)
       \omega_{\varphi}((\tilde{\alpha}_g(x)-x)^*(\tilde{\alpha}_g(x)-x))
    &={}
      \omega_{\varphi}(T_{\varphi}(x^*x))
      -\omega_{\varphi}(T_{\varphi}(x^*)x)
      -\omega_{\varphi}(x^*T_{\varphi}(x))
      +\omega_{\varphi}(x^*x)\\
    &=0.
\end{align*}
It follows from the faithfulness of $\omega_{\varphi}$ that \mbox{$\tilde{\alpha}_g(x)=x$} for every \mbox{$g\in\operatorname{supp}(\check\mu)$}.
Since $\check\mu$ is also generating, this yields \mbox{$\tilde{\alpha}_g(x)=x$} for
every $g\in \Gamma$. Thus \mbox{$x\in M_{\varphi}^\Gamma$}. The reverse inclusion is
immediate, and hence \mbox{$\mathrm{Fix}(T_{\varphi})=M_{\varphi}^\Gamma$}. Finally, the action is inner and
\mbox{$\{U_g:g\in \Gamma\}''=\pi_{\varphi}(A)''=M_{\varphi}$}.
Therefore
\[
    M_{\varphi}^\Gamma
    =M_{\varphi}\cap\{U_g:g\in \Gamma\}'
    =M_{\varphi}\cap M_{\varphi}'
    =Z(M_{\varphi}),
\]
as claimed.
\end{proof}

The implication from uniqueness of a faithful normal stationary state to factoriality
appears in \cite[Proposition~4.14]{hartman2023stationary}. After passing from $A$ to
$\pi_\varphi(A)$, the implication from extremality to ergodicity follows from
\cite[Proposition~2.7(3)]{bader2022charmenability}, while
\cite[Proposition~2.7(2)]{bader2022charmenability} gives uniqueness of the normal
stationary state in the ergodic case. In the present
inner setting ergodicity is equivalent to factoriality by
Lemma~\ref{lem:fixed-points}. We include a direct proof of the complete statement needed
here.

\begin{thm}\label{thm:extreme-factor}
Fix
\mbox{$\varphi\in S_\mu(A)$}. The following are equivalent:
\begin{enumerate}
    \item $\varphi$ is an extreme point of $S_\mu(A)$;
    \item $M_\varphi$ is a factor;
    \item \mbox{$\mathrm{Fix}(T_\varphi)=\mathbb{C}1$};
    \item $\omega_\varphi$ is the unique normal $\mu$-stationary state on $M_\varphi$.
\end{enumerate}
\end{thm}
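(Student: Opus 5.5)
The equivalence (2)$\Leftrightarrow$(3) is immediate from Lemma~\ref{lem:fixed-points}, since $\mathrm{Fix}(T_\varphi)=Z(M_\varphi)$. I will therefore prove (1)$\Leftrightarrow$(2) and (3)$\Leftrightarrow$(4). For both I will use the standard dictionary, via Proposition~\ref{prop:gns.radon.nikodym}, between stationary functionals dominated by $\varphi$ and $T_\varphi$-invariant positive elements of $\pi_\varphi(A)'$.

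For (1)$\Rightarrow$(2), suppose $Z(M_\varphi)\neq\mathbb{C}1$ and pick a nontrivial central projection $p$. Put $t=\omega_\varphi(p)$. By Lemma~\ref{lem:separating}, $t\in(0,1)$. Define $\varphi_1(a)=t^{-1}\omega_\varphi(p\pi_\varphi(a))$ and $\varphi_2(a)=(1-t)^{-1}\omega_\varphi((1-p)\pi_\varphi(a))$. These are states with $\varphi=t\varphi_1+(1-t)\varphi_2$. They are stationary because $T_\varphi(p\,x)=p\,T_\varphi(x)$: indeed $p$ is $\widetilde\alpha$-invariant by Lemma~\ref{lem:fixed-points}, so $p$ commutes with all $U_g$. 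Then Proposition~\ref{prop:normal.stationarity} gives $\omega_\varphi(pT_\varphi(x))=\omega_\varphi(T_\varphi(px))=\omega_\varphi(px)$.

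It remains to see $\varphi_1\neq\varphi$. If they were equal, then by normality and density $\omega_\varphi(px)=t\,\omega_\varphi(x)$ for all $x\in M_\varphi$. Taking $x=1-p$ gives $0=t(1-t)$, a contradiction.

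For (2)$\Rightarrow$(1), let $\varphi=t\psi_1+(1-t)\psi_2$ with $\psi_i\in S_\mu(A)$. Since $t\psi_1\le\varphi$, Proposition~\ref{prop:gns.radon.nikodym} gives $h\in\pi_\varphi(A)'$ with $0\le h\le 1$ and $t\psi_1(a)=\langle \pi_\varphi(a)h\xi_\varphi,\xi_\varphi\rangle$. This is the main step, and I plan to handle it by replacing $h$ with an element of $M_\varphi$. Because $\xi_\varphi$ is cyclic and separating (Lemma~\ref{lem:separating}), Tomita--Takesaki gives $\pi_\varphi(A)'=M_\varphi'=JM_\varphi J$. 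So the normal functional $x\mapsto\langle xh\xi_\varphi,\xi_\varphi\rangle$ on $M_\varphi$ is dominated by $\omega_\varphi$.

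I will then use the stationarity of $\psi_1$ and $\varphi$, and show that the corresponding element is fixed by $\widehat{T}$ on the commutant side. Concretely, the map $y'\mapsto \sum_g\mu(g)U_g^*y'U_g$ preserves $M_\varphi'$, because $U_g\in M_\varphi$ is a unitary. Stationarity of $\psi_1$ says $\langle \pi_\varphi(a)(\sum_g \mu(g)U_gh U_g^*)\xi,\xi\rangle$ agrees with $\langle\pi_\varphi(a) h\xi,\xi\rangle$; here I use $U_g^*\pi_\varphi(a)U_g=\pi_\varphi(\alpha_{g^{-1}}(a))$ together with a stationarity computation that moves the $U_g$ onto $h$.

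Hence $h':=\sum_g\mu(g)U_ghU_g^*$ and $h$ define the same functional. Elements of $M_\varphi'$ are determined by $\langle\cdot\,\xi,\xi\rangle$ against $M_\varphi$, since $\xi$ is cyclic for $M_\varphi$ and hence separating for $M_\varphi'$. So $h$ is a fixed point of a Markov map of this form. I then rerun the argument of Lemma~\ref{lem:fixed-points} using the faithful state $\omega'=\langle\cdot\,\xi,\xi\rangle$ on $M_\varphi'$ and the convexity identity, which should give $U_ghU_g^*=h$ for all $g$. Thus $h\in M_\varphi'\cap\{U_g\}'=M_\varphi'\cap M_\varphi=Z(M_\varphi)=\mathbb{C}1$. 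Therefore $t\psi_1=c\varphi$, and since both are states, $\psi_1=\varphi$. The delicate point is the bookkeeping showing that $\psi_1$-stationarity translates into the $\mu$-average of $h$ (rather than $\check\mu$); I expect that to be where care is needed.

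Finally, for (3)$\Leftrightarrow$(4), let $\omega$ be a normal stationary state on $M_\varphi$. Then $\omega\circ E=\omega$, because $\omega\circ E_N=\omega$ and the point-weak limit is preserved by normality of $\omega$. If $\mathrm{Fix}(T_\varphi)=\mathbb{C}1$, then $E(x)=\omega_\varphi(E(x))1=\omega_\varphi(x)1$, so $\omega=\omega_\varphi$. Conversely, if $\mathrm{Fix}(T_\varphi)$ is nontrivial, take a central projection $p\neq0,1$ as above. Then $\omega_\varphi(p\,\cdot)/\omega_\varphi(p)$ is a normal stationary state on $M_\varphi$ different from $\omega_\varphi$, by the same computation as in the first step.
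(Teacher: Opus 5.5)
Your arguments for (2)$\Leftrightarrow$(3), for (1)$\Rightarrow$(2) via a nontrivial central projection, and for (3)$\Leftrightarrow$(4) via the ergodic projection $E$ are correct and essentially match the paper. The gap is in (2)$\Rightarrow$(1), and it is not the $\mu$ versus $\check\mu$ bookkeeping you flag.

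Since $U_g=\pi_\varphi(u_g)\in M_\varphi$ and $h\in\pi_\varphi(A)'=M_\varphi'$, one has $U_ghU_g^*=h$ for every $g$. So the map $y'\mapsto\sum_g\mu(g)U_g^*y'U_g$ is the identity on $M_\varphi'$. The statement ``$h$ is a fixed point'' therefore carries no information, and rerunning Lemma~\ref{lem:fixed-points} only returns $U_ghU_g^*=h$. Your concluding identity is also false. From $\{U_g:g\in\Gamma\}''=M_\varphi$ one gets $\{U_g:g\in\Gamma\}'=M_\varphi'$, so $M_\varphi'\cap\{U_g\}'=M_\varphi'$, not $M_\varphi'\cap M_\varphi$. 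Nothing forces $h$ to be central.

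The actual obstruction is the step that ``moves the $U_g$ onto $h$''. Stationarity of $\psi_1$ reads $\sum_g\mu(g)\langle\pi_\varphi(a)hU_g\xi_\varphi,U_g\xi_\varphi\rangle=\langle\pi_\varphi(a)h\xi_\varphi,\xi_\varphi\rangle$, and $U_g$ cannot be pushed through $\xi_\varphi$. When $\omega_\varphi$ is tracial this is possible: then $U_g\xi_\varphi=JU_g^*J\xi_\varphi$ with $JU_g^*J\in M_\varphi'$. In that case the correct Markov map on $M_\varphi'$ is conjugation by the commutant unitaries $JU_gJ$, whose fixed points in $M_\varphi'$ are exactly $Z(M_\varphi)$. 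For non-tracial $\varphi$ you would need genuine modular-theoretic input, for instance a dual u.c.p.\ map on $M_\varphi'$ together with an identification of its fixed points. The proposal does not supply this.

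The repair is short and is the paper's route (4)$\Rightarrow$(1). You already observed that $\omega_1(x)=t^{-1}\langle xh\xi_\varphi,\xi_\varphi\rangle$ is a normal state on $M_\varphi$ extending $\psi_1$. The functionals $\omega_1\circ T_\varphi$ and $\omega_1$ are normal and agree on $\pi_\varphi(A)$ by stationarity of $\psi_1$, so $\omega_1$ is a normal $\mu$-stationary state. Then (4), which you derived from (3), gives $\omega_1=\omega_\varphi$ and hence $\psi_1=\varphi$. Equivalently, $\omega_1=\omega_1\circ E$ and $E(x)=\omega_\varphi(x)1$. With this replacement your cycle (1)$\Rightarrow$(2)$\Leftrightarrow$(3)$\Leftrightarrow$(4)$\Rightarrow$(1) closes, and no Tomita--Takesaki theory is needed.
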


\begin{proof}
The equivalence of (2) and (3) follows immediately from
Lemma~\ref{lem:fixed-points}. 
Assume that $M_{\varphi}$ is a factor and let \mbox{$x\in M_{\varphi}$}. $E(x)$ is a fixed point of $T_{\varphi}$, and thus $E(x)=c_x1$, with \mbox{$c_x\in\mathbb{C}$}. Therefore,
$$c_x=(\omega_{\varphi}\circ E)(x)=\omega_{\varphi}(x),$$
that is \mbox{$E(x)=\omega_{\varphi}(x)1$}.
Now let $\rho$ be a normal $\mu$-stationary state on $M_{\varphi}$. By stationarity, \mbox{$\rho=\rho\circ E_N$} for every \mbox{$N\in\mathbb{N}$}, and thus by normality 
\mbox{$\rho=\rho\circ E$}.
Hence for every \mbox{$x\in M_{\varphi}$},
$$\rho(x)=\rho(E(x))=\omega_{\varphi}(x),$$
and uniqueness is proved, proving (2)$\Rightarrow$(4).
We next prove (4)$\Rightarrow$(1). Suppose that
\[
    \varphi=t\varphi_1+(1-t)\varphi_2
\]
where \mbox{$\varphi_1,\varphi_2\in S_\mu(A)$} and $t\in (0,1)$. Since
\mbox{$t\varphi_1\leq\varphi$} and \mbox{$(1-t)\varphi_2\leq\varphi$},
the GNS Radon--Nikodym theorem
gives positive contractions
\mbox{$h_1,h_2\in\pi_{\varphi}(A)'$} such that
\begin{align*}
    t\varphi_1(a)
    &=\langle\pi_{\varphi}(a)h_1\xi_{\varphi},\xi_{\varphi}\rangle,\\
    (1-t)\varphi_2(a)
    &=\langle\pi_{\varphi}(a)h_2\xi_{\varphi},\xi_{\varphi}\rangle
\end{align*}
for every $a\in A$. Consequently, the formulas
\begin{align*}
    \omega_1(x)
    &=\frac{1}{t}
      \langle xh_1^{1/2}\xi_{\varphi},h_1^{1/2}\xi_{\varphi}\rangle,\\
    \omega_2(x)
    &=\frac{1}{1-t}
      \langle xh_2^{1/2}\xi_{\varphi},h_2^{1/2}\xi_{\varphi}\rangle
\end{align*}
define normal states on $M_{\varphi}$ extending $\varphi_1$ and $\varphi_2$,
respectively. Since $\omega_i\circ T_{\varphi}$ and $\omega_i$ agree on the ultraweakly dense
subalgebra $\pi_\varphi(A)$, each $\omega_i$ is $\mu$-stationary. By (4),
\mbox{$\omega_1=\omega_2=\omega_{\varphi}$}.
Restricting to $\pi_{\varphi}(A)$ gives
\mbox{$\varphi_1=\varphi_2=\varphi$}, and thus $\varphi$ is extreme.
\noindent
Finally we prove $(1)\Rightarrow (2)$. Assume that $M_{\varphi}$ is not a factor and take a nontrivial central
projection \mbox{$z\in Z(M_{\varphi})$}. By Lemma~\ref{lem:separating}, $\omega_{\varphi}$ is
faithful, so
\[
    0<\lambda:=\omega_{\varphi}(z)<1.
\]
Define normal states on $M_{\varphi}$ by
\[
    \omega_z(x)=\frac{\omega_{\varphi}(zx)}{\lambda},
    \qquad
    \omega_{1-z}(x)=\frac{\omega_{\varphi}((1-z)x)}{1-\lambda}.
\]
Every $\tilde{\alpha}_g$ fixes $z$, and hence
\[
    \omega_z\circ T_{\varphi}=\omega_z,
    \qquad
    \omega_{1-z}\circ T_{\varphi}=\omega_{1-z}.
\]
Their restrictions $\varphi_z$ and $\varphi_{1-z}$ to $\pi_{\varphi}(A)$ are
distinct $\mu$-stationary states. Indeed, equality of the restrictions would,
by normality and ultraweak density, imply equality of $\omega_z$ and
$\omega_{1-z}$, which is impossible since their support projections are
orthogonal. We therefore obtain the nontrivial decomposition
\[
    \varphi
    =\lambda\varphi_z+(1-\lambda)\varphi_{1-z}.
\]
Thus $\varphi$ is not extreme.
\end{proof}
The following Proposition is a partial converse for \ref{thm:extreme-factor}, and requires no action.
\begin{prop}
    Let \(\Sigma\) be a semigroup of unital completely positive maps on a separable \(C^*\)-algebra \(A\).
    Assume that, for every \(\varphi\in S_{\Sigma}(A)\),
    \[
    \varphi \in \partial_e S_{\Sigma}(A)
    \quad\Longleftrightarrow\quad
    M_{\varphi} \text{ is a factor}.
\]
    Then \(S_{\Sigma}(A)\) is a Choquet simplex.
    \label{prop:sufficient.cond.for.simplex}
\end{prop}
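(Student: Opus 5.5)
The plan is to show that the maximal (boundary) measure of any $\varphi\in S_\Sigma(A)$ is forced to be the central decomposition measure of $\varphi$. That measure depends only on $\varphi$, which gives uniqueness. Since $A$ is separable, $S_\Sigma(A)$ is metrizable. So every $\varphi\in S_\Sigma(A)$ has a representing probability measure $\nu$ with $\nu(\partial_eS_\Sigma(A))=1$, and it suffices to show that any such $\nu$ coincides with the central measure of $\varphi$.

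\emph{Step 1: distinct extreme points are disjoint.} Let $\varphi_1\neq\varphi_2$ be in $\partial_eS_\Sigma(A)$. By hypothesis $M_{\varphi_1}$ and $M_{\varphi_2}$ are factors, so $\pi_{\varphi_1}$ and $\pi_{\varphi_2}$ are either quasi-equivalent or disjoint. Suppose they were quasi-equivalent, and put $\psi=\tfrac12(\varphi_1+\varphi_2)\in S_\Sigma(A)$. Then $\pi_\psi$ is a subrepresentation of $\pi_{\varphi_1}\oplus\pi_{\varphi_2}$, which is quasi-equivalent to $\pi_{\varphi_1}$. By Proposition~\ref{prop:quasi.equivalence.characterization}, $M_\psi$ is then a factor, being isomorphic to a reduction of a factor by a central-support-one projection. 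By hypothesis $\psi$ is extreme, which contradicts $\varphi_1\neq\varphi_2$. Hence distinct extreme points have disjoint GNS representations.

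\emph{Step 2: $\nu$ is orthogonal and subcentral.} This is the measure-theoretic step, and I expect it to be the main obstacle. Let $\nu$ be concentrated on $\partial_eS_\Sigma(A)$, which is a set of factor states that are pairwise disjoint by Step~1. I would apply the standard results for separable $C^*$-algebras (Effros; Bratteli--Robinson, Section~4.2 and Theorem~4.4.9 together with the factor-state decomposition results). Concretely, I would first show that for any Borel $B\subset\partial_eS_\Sigma(A)$ the functionals $\int_B\omega\,d\nu$ and $\int_{B^c}\omega\,d\nu$ are disjoint, hence orthogonal. For this I would approximate $B$ by compact sets and use that a positive functional dominated by both would, after disintegrating, have to charge pairs of non-disjoint factor states; Step~1 excludes this. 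This makes $\nu$ an orthogonal measure, so by Tomita's theorem the associated abelian algebra $\mathcal B_\nu\subset\pi_\varphi(A)'$ exists. Disjointness of the pieces then places the corresponding projections in $Z(M_\varphi)$, so $\mathcal B_\nu\subset Z(M_\varphi)$ and $\nu$ is subcentral.

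\emph{Step 3: identify $\nu$.} Because $\nu$ is supported on factor states and is subcentral, I would use the characterization that a subcentral measure concentrated on factor states is the central measure $\nu_{Z(M_\varphi)}$. Alternatively, if $\mathcal B_\nu\subsetneq Z(M_\varphi)$, a further central projection would split $\nu$-almost every component nontrivially, contradicting factoriality $\nu$-a.e. Since the central measure is determined by $\varphi$ alone, any two boundary representing measures coincide. Hence $S_\Sigma(A)$ is a Choquet simplex.

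As a fallback for Step~2, I would try to show instead that the positive cone generated by $S_\Sigma(A)$ is a lattice. For this I would identify the $\Sigma$-invariant functionals below $\varphi$, through Proposition~\ref{prop:gns.radon.nikodym}, with positive elements of $Z(M_\varphi)$, using the disjointness of Step~1. The commutativity of the center then gives the lattice property directly.
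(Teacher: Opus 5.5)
Your Step~1 is correct, and it is the core of the paper's argument. If two distinct extreme points had quasi-equivalent GNS representations, then $\frac12(\varphi_1+\varphi_2)$ would have factorial GNS algebra and hence be extreme, which is absurd.

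The gap is Step~2. Your sketch uses only that $\nu$ is concentrated on pairwise disjoint factor states. That alone does not imply that $\int_B\omega\,d\nu$ and $\int_{B^c}\omega\,d\nu$ are disjoint. This is the standard non-type-I pathology, and it already occurs in $C^*(\mathbb{F}_2)$:
\begin{itemize}
\item With free generators $a,b$, let $\omega_z$ ($z\in\mathbb{T}$) be the state given by the positive-definite function $\omega_z(a^n)=z^n$ and $\omega_z(g)=0$ for $g\notin\langle a\rangle$.
\item Its GNS representation is $\operatorname{Ind}_{\langle a\rangle}^{\mathbb{F}_2}\chi_z$. This is irreducible by Mackey's criterion, because $\langle a\rangle$ is malnormal.
\item The point spectrum of $\pi_{\omega_z}(a)$ is exactly $\{z\}$, since the other $\langle a\rangle$-orbits on $\mathbb{F}_2/\langle a\rangle$ are free. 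So the $\omega_z$ are pairwise inequivalent, hence pairwise disjoint, pure states.
\item Nevertheless $\int_{\mathbb{T}}\omega_z\,dz=\tau$ is the canonical trace, whose GNS algebra $L(\mathbb{F}_2)$ is a factor.
\item Take a Borel set $B\subset\mathbb{T}$ of normalized measure strictly between $0$ and $1$. Both $\int_B\omega_z\,dz$ and $\int_{B^c}\omega_z\,dz$ are dominated by $\tau$. By Proposition~\ref{prop:gns.radon.nikodym}, their GNS representations are nonzero subrepresentations of the factor representation $\pi_\tau$. Hence they are quasi-equivalent, not disjoint.
\end{itemize}
This measure is even orthogonal: it comes from the abelian algebra generated by right translation by $a$ in $\pi_\tau(C^*(\mathbb{F}_2))'$. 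But it is not subcentral, and it is not the central measure $\delta_\tau$.

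So Step~2 cannot give disjointness of the pieces, and Step~3 cannot give subcentrality, along the route you describe. The fallback has the same defect. Step~1 says nothing that would force the Radon--Nikodym derivative of a non-extreme invariant subfunctional into $Z(M_\varphi)$.

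The paper closes this gap with Batty's theorem \cite[Theorem~6.1]{cjk1982invariant}. For separable $A$, if $S_\Sigma(A)$ is not a simplex, there exist distinct $\varphi_1,\varphi_2\in\partial_eS_\Sigma(A)$ that are $\Sigma$-equivalent; in particular $\pi_{\varphi_1}\simeq\pi_{\varphi_2}$. This theorem is exactly the bridge from non-uniqueness of boundary measures to a pair of extreme points with equivalent GNS representations. Your Step~1 rules out such a pair, by Lemma~\ref{lem:quasi.equiv.of.GNS.for.convex.comb} applied to the midpoint.

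If you replace Steps~2 and~3 by this citation, the proof is complete. Without it, or an equivalent argument that produces two non-disjoint extreme points from two distinct boundary measures, your proposal does not establish uniqueness of the boundary measure.
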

Before proving Proposition~\ref{prop:sufficient.cond.for.simplex}, we prove a technical lemma.

\begin{lem}
    Let \mbox{$\rho,\varphi_1,\varphi_2\in S(A)$} and assume there are $\lambda_1,\lambda_2>0$ such that
    \[
    \rho=\lambda_1\varphi_1+\lambda_2\varphi_2.
    \]
    Then
    \[
    \pi_{\rho}\sim_{\mathrm{qe}}\pi_{\varphi_1}\oplus\pi_{\varphi_2}.
    \]
    \label{lem:quasi.equiv.of.GNS.for.convex.comb}
\end{lem}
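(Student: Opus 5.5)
We prove both quasi-containments $\pi_\rho\preceq\pi_{\varphi_1}\oplus\pi_{\varphi_2}$ and $\pi_{\varphi_1}\oplus\pi_{\varphi_2}\preceq\pi_\rho$ directly, and then conclude by the definition of quasi-equivalence. Alternatively, Proposition~\ref{prop:quasi.equivalence.characterization} lets us argue at the level of normal isomorphisms.

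For the first containment, consider the vector $\eta=(\lambda_1^{1/2}\xi_{\varphi_1},\lambda_2^{1/2}\xi_{\varphi_2})$ in $H_{\varphi_1}\oplus H_{\varphi_2}$. For $a\in A$,
\[
\langle(\pi_{\varphi_1}\oplus\pi_{\varphi_2})(a)\eta,\eta\rangle=\lambda_1\varphi_1(a)+\lambda_2\varphi_2(a)=\rho(a).
\]
Consequently, the map $\pi_\rho(a)\xi_\rho\mapsto(\pi_{\varphi_1}\oplus\pi_{\varphi_2})(a)\eta$ is isometric and extends to an isometry $V$ intertwining $\pi_\rho$ with the restriction of $\pi_{\varphi_1}\oplus\pi_{\varphi_2}$ to the invariant subspace $\overline{(\pi_{\varphi_1}\oplus\pi_{\varphi_2})(A)\eta}$. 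Hence $\pi_\rho$ is unitarily equivalent to a subrepresentation of $\pi_{\varphi_1}\oplus\pi_{\varphi_2}$, and in particular $\pi_\rho\preceq\pi_{\varphi_1}\oplus\pi_{\varphi_2}$ (taking $I$ a singleton).

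For the reverse containment, note that $\lambda_i\varphi_i\le\rho$, so Proposition~\ref{prop:gns.radon.nikodym} gives a positive $h_i\in\pi_\rho(A)'$ with
\[
\lambda_i\varphi_i(a)=\langle\pi_\rho(a)h_i^{1/2}\xi_\rho,h_i^{1/2}\xi_\rho\rangle .
\]
Since $\lambda_i>0$, the same isometry argument embeds $\pi_{\varphi_i}$ as the cyclic subrepresentation of $\pi_\rho$ generated by $\lambda_i^{-1/2}h_i^{1/2}\xi_\rho$. Thus each $\pi_{\varphi_i}$ is unitarily equivalent to a subrepresentation of $\pi_\rho$. It follows that $\pi_{\varphi_1}\oplus\pi_{\varphi_2}$ is unitarily equivalent to a subrepresentation of $\pi_\rho\oplus\pi_\rho=\pi_\rho^{(2)}$, by placing the two embeddings in different summands. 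This gives $\pi_{\varphi_1}\oplus\pi_{\varphi_2}\preceq\pi_\rho$.

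Combining the two containments yields $\pi_\rho\sim_{\mathrm{qe}}\pi_{\varphi_1}\oplus\pi_{\varphi_2}$. The only delicate points are checking that the isometries are well defined, which follows from the displayed inner-product identities and the standard GNS argument, and using amplification by $2$ in the second step. No serious obstacle is expected.
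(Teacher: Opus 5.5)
Your proposal is correct and matches the paper's proof. The isometry built from the vector $\sqrt{\lambda_1}\xi_{\varphi_1}\oplus\sqrt{\lambda_2}\xi_{\varphi_2}$ gives $\pi_\rho\preceq\pi_{\varphi_1}\oplus\pi_{\varphi_2}$, and the GNS Radon--Nikodym vectors $\lambda_i^{-1/2}h_i^{1/2}\xi_\rho$ embed each $\pi_{\varphi_i}$ into $\pi_\rho$, so $\pi_{\varphi_1}\oplus\pi_{\varphi_2}$ embeds into $\pi_\rho\oplus\pi_\rho$. The only cosmetic difference is that you read off $\pi_{\varphi_1}\oplus\pi_{\varphi_2}\preceq\pi_\rho$ directly from the definition with a two-fold amplification, whereas the paper goes through $\pi_\rho\oplus\pi_\rho\sim_{\mathrm{qe}}\pi_\rho$.
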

\begin{proof}
    Define the map
    \mbox{$U\colon \pi_{\rho}(A)\xi_{\rho}\to H_{\varphi_1}\oplus H_{\varphi_2}$} by $$U(\pi_{\rho}(a)\xi_{\rho})= \sqrt{\lambda_1}\pi_{\varphi_1}(a)\xi_{\varphi_1}\oplus \sqrt{\lambda_2}\pi_{\varphi_2}(a)\xi_{\varphi_2}.$$
    $U$ is an isometry on $\pi_{\rho}(A)\xi_{\rho}$ and thus extends to an isometry on $H_{\rho}$. Moreover, $U$ intertwines $\pi_{\rho}$ and \mbox{$\pi_{\varphi_1}\oplus\pi_{\varphi_2}$}, and therefore
    $$\pi_{\rho}\simeq \left(\pi_{\varphi_1}\oplus\pi_{\varphi_2}\right)|_{\overline{(\pi_{\varphi_1}\oplus\pi_{\varphi_2})(A)\left(\sqrt{\lambda_1}\xi_{\varphi_1}\oplus \sqrt{\lambda_2}\xi_{\varphi_2}\right)}}.$$
    On the other hand, \mbox{$\lambda_i\varphi_i\leq \rho$}, and thus by the GNS Radon-Nikodym derivative theorem, there are contractions \mbox{$h_1,\,h_2\in \pi_{\rho}(A)'$} such that
    $$\varphi_i(a)=\langle \pi_{\rho}(a)\zeta_i,\zeta_i\rangle,$$
    with \mbox{$\zeta_i=\lambda_i^{-\frac{1}{2}}h_i^{\frac{1}{2}}\xi_{\rho}$}. Letting \mbox{$K_i=\overline{\pi_{\rho}(A)\zeta_i}$}, the triple $(\pi_{\rho}|_{K_i}, K_i,\zeta_i)$ is a GNS representation for $\varphi_i$. By uniqueness, \mbox{$\pi_{\varphi_i}\simeq \pi_{\rho}|_{K_i}$}, and therefore \mbox{$\pi_{\varphi_1}\oplus \pi_{\varphi_2}$} is unitarily equivalent to a subrepresentation of \mbox{$\pi_{\rho}^{\oplus 2}$}. Since \mbox{$\pi_{\rho}\sim_{\mathrm{qe}}\pi_{\rho}^{\oplus 2}$},
    we get the chain
    \[
    \pi_{\rho}\preceq\pi_{\varphi_1}\oplus \pi_{\varphi_2}
    \preceq \pi_{\rho}\oplus \pi_{\rho}\sim_{\mathrm{qe}}\pi_{\rho},
    \]
    and the conclusion follows.
\end{proof}
\begin{proof}[Proof of Proposition \ref{prop:sufficient.cond.for.simplex}]
    Assume $S_{\Sigma}(A)$ is not a simplex. By \cite[Theorem~6.1]{cjk1982invariant} there are distinct $\Sigma$-invariant states, $\varphi_1,\varphi_2\in\partial_eS_{\Sigma}(A)$, which are $\Sigma$-equivalent. In particular, $\pi_{\varphi_1}\simeq\pi_{\varphi_2}$.

    \noindent Set \mbox{$\rho=\frac{\varphi_1+\varphi_2}{2}\in S_{\Sigma}(A)$}. By Lemma~\ref{lem:quasi.equiv.of.GNS.for.convex.comb},
    \[
    \pi_{\rho}\sim_{\mathrm{qe}}\pi_{\varphi_1}\oplus\pi_{\varphi_2}
    \simeq \pi_{\varphi_1}\oplus \pi_{\varphi_1}
    \sim_{\mathrm{qe}}\pi_{\varphi_1}.
    \]
    In particular, \mbox{$M_{\rho}\cong M_{\varphi_1}$} \cite[Section~III.5.1]{Blackadar2006}, and thus $M_{\rho}$ is a factor. By assumption, this implies \mbox{$\rho\in \partial_eS_{\Sigma}(A)$}, and \mbox{$\rho=\varphi_1=\varphi_2$}, a contradiction.
\end{proof}
\begin{remark}
    Combining Theorem~\ref{thm:extreme-factor} and Proposition~\ref{prop:sufficient.cond.for.simplex}, we obtain a second proof of Theorem~\ref{thm:stationaries.form.simplex}.
\end{remark}

\begin{remark}
    The other direction of Proposition~\ref{prop:sufficient.cond.for.simplex} does not hold in general. Indeed, let $\mathbb{Z}$ act on $C(\mathbb{T})$ by an irrational rotation. This action has a unique invariant probability measure, namely the normalized Lebesgue measure $\lambda$. In particular, it is an extreme point of $\prob^{\mathbb Z}(\mathbb{T})$. However, its GNS representation is given by multiplication operators,
    \[
    \pi_{\lambda}\colon C(\mathbb{T})\to B(L^2(\mathbb{T},\lambda)),
    \qquad (\pi_{\lambda}(f)\zeta)(z)=f(z)\zeta(z),
    \]
    and \mbox{$M_{\lambda}=L^{\infty}(\mathbb{T},\lambda)$}, which is not a factor.
\end{remark}

\section{The invariant face and the type decomposition}
The following results concern general $\Gamma$ actions on a unital $C^*$-algebra $A$, with $A$ separable and $\Gamma$ discrete and countable.
\begin{prop}
 $S^{\Gamma}(A)\subset S_{\mu}(A)$ is a closed face.
    \label{prop:closed.face}
\end{prop}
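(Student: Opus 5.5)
We need to show three things: $S^\Gamma(A)\subset S_\mu(A)$, that $S^\Gamma(A)$ is weak$^*$ closed and convex, and that it is a face of $S_\mu(A)$. The inclusion is immediate: if $g\cdot\varphi=\varphi$ for all $g$, then $\sum_g\mu(g)(g\cdot\varphi)=\varphi$. Closedness and convexity hold because $S^\Gamma(A)$ is the intersection, over $g\in\Gamma$ and $a\in A$, of the weak$^*$ closed affine conditions $\varphi(\alpha_{g^{-1}}(a))=\varphi(a)$.

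The substance is the face property. Suppose $\varphi=t\varphi_1+(1-t)\varphi_2$ with $\varphi\in S^\Gamma(A)$, $\varphi_1,\varphi_2\in S_\mu(A)$ and $0<t<1$. We must show $\varphi_1$ is invariant; by symmetry the same then holds for $\varphi_2$. The plan is to imitate the proof of Lemma~\ref{lem:difference.goes.to.0}, but in the GNS space of the invariant state $\varphi$, using the GNS Radon--Nikodym theorem (Proposition~\ref{prop:gns.radon.nikodym}).

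First, since $\varphi$ is invariant, $V_g\pi_\varphi(a)\xi_\varphi=\pi_\varphi(\alpha_g(a))\xi_\varphi$ defines a unitary representation of $\Gamma$ on $H_\varphi$. It fixes $\xi_\varphi$ and satisfies $V_g\pi_\varphi(a)V_g^*=\pi_\varphi(\alpha_g(a))$. Conjugation by $V_g$ therefore preserves $\pi_\varphi(A)'$, giving an action $\beta_g=\mathrm{Ad}(V_g)$ on the commutant.

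Next, Proposition~\ref{prop:gns.radon.nikodym} gives $h\in\pi_\varphi(A)'$ with $0\le h\le 1$ and $t\varphi_1(a)=\langle\pi_\varphi(a)h\xi_\varphi,\xi_\varphi\rangle$. A direct computation gives
\[
t(g\cdot\varphi_1)(a)=\langle \pi_\varphi(a)\beta_g(h)\xi_\varphi,\xi_\varphi\rangle .
\]
The correspondence between positive functionals dominated by $\varphi$ and elements of $\pi_\varphi(A)'$ is injective, because $\xi_\varphi$ is cyclic for $\pi_\varphi(A)$ and hence separating for the commutant. So stationarity of $\varphi_1$ translates into $Th=h$, where
\[
T=\sum_g\mu(g)\beta_{g^{-1}} ,
\]
and invariance of $\varphi_1$ is equivalent to $\beta_g(h)=h$ for all $g$.

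To pass from $Th=h$ to full invariance, apply the same energy argument as in Lemma~\ref{lem:fixed-points}, now with the vector state $\omega(y)=\langle y\xi_\varphi,\xi_\varphi\rangle$ on $\pi_\varphi(A)'$. This state is $\beta$-invariant because $V_g\xi_\varphi=\xi_\varphi$, and faithful on $\pi_\varphi(A)'$ because $\xi_\varphi$ is separating for it. One computes
\[
\sum_g\mu(g)\,\omega\big((\beta_{g^{-1}}(h)-h)^*(\beta_{g^{-1}}(h)-h)\big)=\omega(T(h^2))-2\omega(h^2)+\omega(h^2)=0,
\]
using $Th=h$ together with $\omega\circ T=\omega$. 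Faithfulness then gives $\beta_g(h)=h$ for $g\in\operatorname{supp}(\check\mu)$, and the generating hypothesis extends this to all of $\Gamma$. Hence $\varphi_1\in S^\Gamma(A)$.

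The main obstacle is the bookkeeping that identifies stationarity of $\varphi_1$ with $Th=h$, in particular checking that $\beta_g(h)$ implements $g\cdot\varphi_1$ with the correct sign convention for $g$. An alternative shortcut avoids the commutant: one can bound $\varphi_1\big((\alpha_g(a)-a)^*(\alpha_g(a)-a)\big)$ by $t^{-1}\varphi(\cdots)$, but that controls only $\varphi_1$'s values on such positive elements, not invariance directly. So the commutant argument above is the plan I would follow.
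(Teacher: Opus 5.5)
Your proposal is correct and is essentially the paper's proof. The paper shows that the vector $\eta=t^{-1}h\xi$ in the GNS space of the invariant state satisfies $\eta=\sum_g\mu(g)\rho(g)\eta$, and then runs the same energy argument with the unitaries $\rho(g)$; your commutant argument is this argument transported through the separating vector, since $\beta_g(h)\xi_\varphi=V_gh\xi_\varphi$.

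One bookkeeping correction: your formula $t(g\mathbin{\cdot}\varphi_1)(a)=\langle\pi_\varphi(a)\beta_g(h)\xi_\varphi,\xi_\varphi\rangle$ turns stationarity into $h=\sum_g\mu(g)\beta_g(h)$, so the relevant operator is $\sum_g\mu(g)\beta_g$ rather than $\sum_g\mu(g)\beta_{g^{-1}}$. This only replaces $\operatorname{supp}(\check\mu)$ by $\operatorname{supp}(\mu)$ in your conclusion, which is harmless since both measures are generating.
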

\begin{proof}
Closedness is clear. Let $\tau\in S^{\Gamma}(A)$, let $t\in(0,1)$, and let
\mbox{$\varphi_1,\varphi_2\in S_\mu(A)$} be such that
\[
    \tau=t\varphi_1+(1-t)\varphi_2.
\]
Let $(\pi,\mathcal{H},\xi)$ be $\tau$'s GNS representation. Since \mbox{$t\varphi_1\leq \tau$}, the GNS Radon--Nikodym theorem \cite[Theorem~1.4.2]{arveson1969subalgebras} gives a positive contraction $h\in \pi(A)'$ such that
$$\varphi_1(a)=\frac{1}{t}\langle \pi(a)h\xi,\xi\rangle.$$
Letting \mbox{$\eta=\frac{1}{t}h\xi$}, and since $h\in \pi(A)'$ is self-adjoint, 
$$\varphi_1(a)=\langle \pi(a)\xi,\eta\rangle.$$
By stationarity,
\begin{align*}
    \varphi_1(a)
    &=\sum_{g\in \Gamma}\mu(g)\varphi_1(g^{-1}\mathbin{\cdot}a) \\
    &=\sum_{g\in \Gamma}\mu(g)      \langle\pi(g^{-1}\mathbin{\cdot}a)\xi,\eta\rangle \\
    &=\sum_{g\in \Gamma}\mu(g)
    \langle\rho(g)^*\pi(a)\xi,\eta\rangle \\
    &=\left\langle\pi(a)\xi,
     \sum_{g\in \Gamma}\mu(g)\rho(g)\eta\right\rangle,
\end{align*}
where \mbox{$\rho\colon \Gamma\to U(\mathcal H)$} is the unitary representation corresponding
to $\tau$ (\!\cite[Theorem 5.3]{segal1951class}, \cite[Lemma 12.6]{KennedyS}). Since \mbox{$\pi(A)\xi\subset \mathcal{H}$} is a dense subspace,
\[
    \eta=\sum_{g\in \Gamma}\mu(g)\rho(g)\eta.
\]
Since the $\rho(g)$'s are unitaries and

\begin{align*}
\sum_{g\in \Gamma}\mu(g)\lVert\rho(g)\eta-\eta\rVert^2
&=
\sum_{g\in \Gamma}\mu(g)
\left(
\lVert\rho(g)\eta\rVert^2
-2\operatorname{Re}\langle\rho(g)\eta,\eta\rangle
+\lVert\eta\rVert^2
\right) \\
&=
\sum_{g\in \Gamma}\mu(g)\lVert\rho(g)\eta\rVert^2
-2\lVert\eta\rVert^2
+\lVert\eta\rVert^2 \\
&=0,
\end{align*}
\mbox{$\rho(g)\eta=\eta$} for every \mbox{$g\in\operatorname{supp}\mu$}, and hence for
every $g\in \Gamma$, since $\mu$ is generating. Therefore,
\begin{align*}
    (g\mathbin{\cdot}\varphi_1)(a)
&=\langle\pi(g^{-1}\mathbin{\cdot}a)\xi,\eta\rangle \\
    &=\langle\pi(a)\xi,\rho(g)\eta\rangle
      =\langle\pi(a)\xi,\eta\rangle
      =\varphi_1(a).
\end{align*}
Thus \mbox{$\varphi_1\in S^\Gamma(A)$}. A similar argument shows
\mbox{$\varphi_2\in S^\Gamma(A)$}.
\end{proof}

\begin{lem}\label{prop:purely.nontracial.iff.measure.0}
Assume $S_{\mu}(A)$ is a Choquet simplex. For \mbox{$\varphi\in S_\mu(A)$}, let $\nu_\varphi$ be the unique Choquet maximal
representing probability measure for $\varphi$ on $S_\mu(A)$. Then
\[
    \varphi\in S^{\Gamma}(A)^\perp
    \quad\Longleftrightarrow\quad
    \nu_\varphi(\partial_eS^{\Gamma}(A))=0.
\]
\end{lem}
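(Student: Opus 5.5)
The plan is to identify both conditions as statements about a split-face decomposition in the simplex $K=S_\mu(A)$, with $F=S^\Gamma(A)$ a closed face by Proposition~\ref{prop:closed.face}. Recall that $\varphi\in S^\Gamma(A)^\perp$ means that $\varphi$ dominates no nonzero positive $\Gamma$-invariant functional. We use two standard facts about a closed face $F$ of a metrizable Choquet simplex $K$. First, $\partial_eF=F\cap\partial_eK$, which is a Borel set. Second, if $\nu$ is a maximal measure on $K$, then its restriction $\nu|_{\partial_eF}$, normalized when nonzero, is a maximal measure whose barycenter lies in $F$. For the second fact, a probability measure concentrated on the closed convex set $F$ has its barycenter in $F$, and restrictions of maximal measures remain maximal.

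\emph{($\Leftarrow$), by contraposition.} Suppose $\varphi\notin S^\Gamma(A)^\perp$. Then some nonzero positive invariant functional $s\tau\le\varphi$ exists, with $\tau\in S^\Gamma(A)$ and $s\in(0,1]$.
\begin{itemize}
\item If $s=1$, then $\varphi=\tau\in F$. Since $F$ is a closed face, $\nu_\varphi$ is supported on $F\cap\partial_eK=\partial_eF$, so $\nu_\varphi(\partial_eF)=1$.
\item If $s<1$, write $\varphi=s\tau+(1-s)\psi$ with $\psi=(1-s)^{-1}(\varphi-s\tau)$. The functional $\psi$ is positive and has norm one, so it is a state.
\end{itemize}
The key point is that $\psi$ is again stationary, because $\varphi$ and $\tau$ are both fixed by $\sigma\mapsto\sigma\circ\Phi_\mu$ and this map is linear. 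Maximal measures in a simplex are affine in the barycenter, so $\nu_\varphi=s\nu_\tau+(1-s)\nu_\psi$. Since $\nu_\tau(\partial_eF)=1$, we get $\nu_\varphi(\partial_eF)\ge s>0$.

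\emph{($\Rightarrow$), again by contraposition.} Suppose $c:=\nu_\varphi(\partial_eF)>0$. If $c=1$, then $\varphi$ is the barycenter of a measure on $F$, so $\varphi\in F$ and $\varphi$ is not in $S^\Gamma(A)^\perp$. If $0<c<1$, decompose
\[
\nu_\varphi=c\,\nu_1+(1-c)\,\nu_2,\qquad \nu_1=c^{-1}\nu_\varphi|_{\partial_eF},
\]
and let $\tau$ and $\psi$ be the barycenters of $\nu_1$ and $\nu_2$. Then $\tau\in F=S^\Gamma(A)$, since $\nu_1$ is carried by the closed convex set $F$. Also $\psi\in K$, and $\varphi=c\tau+(1-c)\psi$. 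Hence $c\tau\le\varphi$ is a nonzero positive invariant functional dominated by $\varphi$, so $\varphi\notin S^\Gamma(A)^\perp$.

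\emph{Main obstacle.} The delicate step is justifying the measure-theoretic facts in the first paragraph, namely that $\partial_eF=F\cap\partial_eK$ is Borel and carries maximal measures of points of $F$. I would handle this with the standard fact that in a metrizable simplex a maximal measure is carried by $\partial_eK$, a $G_\delta$ set, together with the face property of $F$. Suppose a maximal measure $\nu$ has barycenter in $F$. If $\nu$ gave positive mass to $K\setminus F$, then splitting $\nu$ into its parts on $F$ and on $K\setminus F$ would write the barycenter as a proper convex combination of a point of $F$ and a point of $K$. The face property then forces the second barycenter into $F$. That is compatible with positive mass off $F$ only if some $F$-point is represented by a measure charging $K\setminus F$, and this is excluded by the standard result that closed faces are "measure-convex" and measure-extremal (Alfsen). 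I would cite Alfsen's book for this, and for the affinity of $\varphi\mapsto\nu_\varphi$ in a Choquet simplex.
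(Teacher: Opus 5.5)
Your proposal is correct and follows the paper's proof essentially step for step. For ($\Leftarrow$) you split $\varphi=s\tau+(1-s)\psi$ and use affinity of $\varphi\mapsto\nu_\varphi$ together with $\nu_\tau(\partial_eF)=1$; for ($\Rightarrow$) you produce the dominated invariant functional $\int_{\partial_eF}\omega\,d\nu_\varphi(\omega)\le\varphi$, exactly as the paper does.

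The one difference is how $\nu_\tau(\partial_eF)=1$ is justified. The paper takes a Choquet measure for $\tau$ on the metrizable face $F$ itself. That measure is carried by $\partial_eF\subset\partial_eK$, hence is maximal on $K$, and so equals $\nu_\tau$ by uniqueness. This avoids measure-extremality of closed faces altogether. Your own sketch of that fact in the last paragraph is circular, so at that point you are relying entirely on the citation to Alfsen.
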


\begin{proof}
Assume that $\varphi$ is not purely noninvariant, and let \mbox{$0\leq\tau\leq\varphi$} be a
nonzero positive invariant functional. After replacing $\tau$ by a positive scalar multiple
if necessary, we may assume that \mbox{$t=\tau(1)\in(0,1)$}. Write
\[
    \varphi=t\rho+(1-t)\psi,
    \qquad
    \rho=\frac{1}{t}\tau,
    \qquad
    \psi=\frac{1}{1-t}(\varphi-\tau)\in S_\mu(A).
\]
Since $t\nu_\rho+(1-t)\nu_\psi$ is a Choquet maximal representing measure for
$\varphi$, uniqueness gives
\[
    \nu_\varphi=t\nu_\rho+(1-t)\nu_\psi.
\]
Since $S^{\Gamma}(A)$ is a closed face of $S_{\mu}(A)$, and $S_{\mu}(A)$ is a Choquet simplex, $S^{\Gamma}(A)$ is a Choquet simplex. Let \mbox{$\mu_\rho\in\prob(S^{\Gamma}(A))$} be the unique Choquet maximal representing measure for
$\rho$ on $S^{\Gamma}(A)$. Since
\mbox{$\partial_eS^{\Gamma}(A)=\partial_eS_\mu(A)\cap S^{\Gamma}(A)$}, $\mu_\rho$ is a Choquet maximal representing measure for $\rho$ when considered as a measure on $S_{\mu}(A)$. By uniqueness, it equals $\nu_\rho$. Hence
\[
    \nu_\rho(\partial_eS^{\Gamma}(A))=1,
\]
and therefore
\[
    \nu_\varphi(\partial_eS^{\Gamma}(A))\geq t>0.
\]
Conversely, assume that $\nu_\varphi(\partial_eS^{\Gamma}(A))>0$, and define
\[
    \tau_0(\,\cdot\,)
    =\int_{\partial_eS^{\Gamma}(A)}\omega(\,\cdot\,)\,d\nu_\varphi(\omega).
\]
Then $\tau_0$ is a nonzero positive invariant functional and
\mbox{$0\leq\tau_0\leq\varphi$}.
\end{proof}

If $S_{\mu}(A)$ is a Choquet simplex, Proposition \ref{prop:closed.face} implies $S^{\Gamma}(A)$ is a split face.
\cite[Theorem~9]{goodearl1976choquet}. Lemma~\ref{prop:purely.nontracial.iff.measure.0}
identifies its complementary face with $S^{\Gamma}(A)^\perp$. We record the resulting
decomposition explicitly.

\begin{thm}[Invariant--purely noninvariant decomposition]
\label{thm:trace-decomposition}
Assume $S_{\mu}(A)$ is a Choquet simplex. Let \mbox{$\varphi\in S_\mu(A)$} be neither invariant nor purely noninvariant. Then there are unique
$\rho\in  S^{\Gamma}(A)$, \mbox{$\psi\in S^{\Gamma}(A)^\perp$}, and $t\in(0,1)$ such that
\[
    \varphi=t\rho+(1-t)\psi.
\]
\label{thm:tracial.non.tracial.decomp}
\end{thm}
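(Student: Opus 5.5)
Existence and uniqueness both come from the measure decomposition of Lemma~\ref{prop:purely.nontracial.iff.measure.0}. Let $\nu_\varphi$ be the unique maximal representing measure of $\varphi$ on $S_\mu(A)$, and set $E=\partial_eS^\Gamma(A)=\partial_eS_\mu(A)\cap S^\Gamma(A)$. This is a Borel set, being the intersection of the $G_\delta$ set $\partial_eS_\mu(A)$ with a closed set. Put $t=\nu_\varphi(E)$.

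Since $\varphi$ is not purely noninvariant, Lemma~\ref{prop:purely.nontracial.iff.measure.0} gives $t>0$. If $t=1$, then $\varphi$ is the barycenter of a measure on the closed convex set $S^\Gamma(A)$, so $\varphi$ is invariant, contrary to hypothesis. Hence $t\in(0,1)$. We then define $\rho$ as the barycenter of $\nu_\varphi|_E/t$ and $\psi$ as the barycenter of $\nu_\varphi|_{E^c}/(1-t)$, so that $\varphi=t\rho+(1-t)\psi$.

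\emph{Step 1: $\rho\in S^\Gamma(A)$.} The measure $\nu_\varphi|_E/t$ lives on $S^\Gamma(A)$, which is closed and convex, so its barycenter lies there.

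\emph{Step 2: $\psi\in S^\Gamma(A)^\perp$.} The measure $\nu_\varphi|_{E^c}/(1-t)$ is supported on $\partial_eS_\mu(A)$, hence is maximal. In a simplex it is therefore the unique maximal measure $\nu_\psi$. Since $\nu_\psi(E)=0$, the converse direction of Lemma~\ref{prop:purely.nontracial.iff.measure.0} gives $\psi\in S^\Gamma(A)^\perp$. The step I expect to need most care is justifying that a restriction of a maximal measure is still maximal. For metrizable $K$ this holds because maximality is equivalent to being concentrated on the extreme boundary.

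\emph{Step 3: uniqueness.} Suppose $\varphi=s\rho'+(1-s)\psi'$ with $\rho'\in S^\Gamma(A)$, $\psi'\in S^\Gamma(A)^\perp$ and $s\in(0,1)$. Then $s\nu_{\rho'}+(1-s)\nu_{\psi'}$ is maximal and represents $\varphi$, so it equals $\nu_\varphi$. As in the proof of Lemma~\ref{prop:purely.nontracial.iff.measure.0}, $\nu_{\rho'}$ is concentrated on $E$, and Lemma~\ref{prop:purely.nontracial.iff.measure.0} gives $\nu_{\psi'}(E)=0$. Evaluating on $E$ yields $s=t$. Restricting to $E$ and to $E^c$ then gives $\nu_{\rho'}=\nu_\varphi|_E/t$ and $\nu_{\psi'}=\nu_\varphi|_{E^c}/(1-t)$. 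Taking barycenters, $\rho'=\rho$ and $\psi'=\psi$.

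Alternatively, the whole statement follows from the split-face property cited from Goodearl, once Lemma~\ref{prop:purely.nontracial.iff.measure.0} identifies the complementary face with $S^\Gamma(A)^\perp$. The measure argument above is just that argument made explicit.
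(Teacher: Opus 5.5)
Your proposal is correct and follows essentially the same route as the paper's proof. Both set $t=\nu_\varphi(\partial_eS^\Gamma(A))$, take $\rho,\psi$ as barycenters of the normalized restrictions of $\nu_\varphi$, identify those restrictions with $\nu_\rho,\nu_\psi$ by uniqueness, and prove uniqueness by evaluating $\nu_\varphi=s\nu_{\rho'}+(1-s)\nu_{\psi'}$ on $E$ and on its complement. Your extra remarks, namely that $E$ is Borel and that restrictions of boundary measures stay maximal in the metrizable case, only make explicit what the paper uses tacitly.
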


\begin{proof}
Let \mbox{$E=\partial_eS^{\Gamma}(A)$} and set
\[
    t=\nu_\varphi(E).
\]
By Lemma~\ref{prop:purely.nontracial.iff.measure.0}, we have $t>0$.
Moreover, $t<1$, since otherwise $\nu_\varphi$ would be supported on $E$, and hence
$\varphi$ would be invariant. Define
\[
    \rho=\frac{1}{t}\int_E
    \omega(\,\cdot\,)\,d\nu_\varphi(\omega),
    \qquad
    \psi=\frac{1}{1-t}
    \int_{\partial_eS_\mu(A)\setminus E}
    \omega(\,\cdot\,)\,d\nu_\varphi(\omega).
\]
Then $\rho\in S^{\Gamma}(A)$, \mbox{$\psi\in S_\mu(A)$}, and
\[
    \varphi=t\rho+(1-t)\psi.
\]
The normalized restrictions
\[
    \frac{1}{t}\left.\nu_\varphi\right|_E
    \quad\text{and}\quad
    \frac{1}{1-t}
    \left.\nu_\varphi\right|_{\partial_eS_\mu(A)\setminus E}
\]
are Choquet maximal representing measures for $\rho$ and $\psi$, respectively. By
uniqueness, they equal $\nu_\rho$ and $\nu_\psi$. In particular,
\[
    \nu_\psi(E)=0,
\]
so Lemma~\ref{prop:purely.nontracial.iff.measure.0} gives
\mbox{$\psi\in S^{\Gamma}(A)^\perp$}.
To prove uniqueness, suppose that
\[
    \varphi=s\rho'+(1-s)\psi',
\]
where $s\in(0,1)$, $\rho'\in S^{\Gamma}(A)$, and \mbox{$\psi'\in S^{\Gamma}(A)^\perp$}. Since
$s\nu_{\rho'}+(1-s)\nu_{\psi'}$ is a Choquet maximal representing measure for
$\varphi$, uniqueness gives
\[
    \nu_\varphi=s\nu_{\rho'}+(1-s)\nu_{\psi'}.
\]
We have \mbox{$\nu_{\rho'}(E)=1$}, while
Lemma~\ref{prop:purely.nontracial.iff.measure.0} gives \mbox{$\nu_{\psi'}(E)=0$}.
Evaluating the preceding equality on $E$ yields $s=t$. Restricting it to $E$ and to its
complement in $\partial_eS_\mu(A)$ then gives
\[
    \nu_{\rho'}=\nu_\rho
    \qquad\text{and}\qquad
    \nu_{\psi'}=\nu_\psi.
\]
Taking barycenters, we obtain $\rho'=\rho$ and $\psi'=\psi$.
\end{proof}

For the rest of this section, we assume $\Gamma\leq U(A)$ generates $A$ as a $C^*$-algebra, and the action is inner.
For extremal invariant states, a related trace/type~$\mathrm{III}$ dichotomy was
established by St{\o}rmer \cite{stormer1967types}. His theorem assumes that the cyclic
separating vector is invariant under every implementing unitary. Here the state is only
$\mu$-stationary, and no factoriality assumption is made.

\begin{thm}\label{thm:finite-typeIII}
Let
\mbox{$\varphi\in S_\mu(A)$}. Then,
\begin{align*}
    \varphi\in T(A)
    &\quad\Longleftrightarrow\quad M_\varphi\text{ is finite},\\
    \varphi\text{ is purely nontracial}
    &\quad\Longleftrightarrow\quad M_\varphi\text{ is of type }\mathrm{III}.
\end{align*}
\end{thm}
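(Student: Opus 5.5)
The plan is to prove the two equivalences through the normal traces on $M_\varphi$. The organizing fact is the following: \emph{the positive normal traces on $M_\varphi$ whose restriction to $\pi_\varphi(A)$ is dominated by $\varphi$ are exactly the tracial functionals $\tau\le\varphi$ on $A$.}

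For the first equivalence, suppose $\varphi\in T(A)$. Then $\omega_\varphi$ is a normal state with $\omega_\varphi(U_gxU_g^*)=\omega_\varphi(x)$ on $\pi_\varphi(A)$, hence on all of $M_\varphi$ by normality. Since $M_\varphi=\{U_g\}''$, the span of the $U_g$ is ultraweakly dense, so $\omega_\varphi(xy)=\omega_\varphi(yx)$ and $\omega_\varphi$ is a normal trace. By Lemma~\ref{lem:separating} it is faithful, so $M_\varphi$ is finite.

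Conversely, suppose $M_\varphi$ is finite. I would use the center-valued trace $\mathrm{ctr}\colon M_\varphi\to Z(M_\varphi)$, which is normal, unital, faithful, and satisfies $\mathrm{ctr}(uxu^*)=\mathrm{ctr}(x)$. By Lemma~\ref{lem:fixed-points}, $Z(M_\varphi)=\mathrm{Fix}(T_\varphi)$, and the conditional expectation $E$ maps into $Z(M_\varphi)$. I would compare the two states $\omega_\varphi$ and $\omega_\varphi\circ\mathrm{ctr}$. The second is a normal trace, and it agrees with $\omega_\varphi$ on the center. Both are normal $T_\varphi$-stationary: for the trace this holds because $\mathrm{ctr}\circ T_\varphi=\mathrm{ctr}$. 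Each is therefore determined by its values on the image of $E$, namely $\omega(x)=\omega(E(x))$, which is a limit of $\omega\circ E_N$ by normality. Since $E(x)$ is central, the two states coincide. Hence $\omega_\varphi$ is a trace and $\varphi\in T(A)$.

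For the second equivalence, suppose $M_\varphi$ is not of type III. Then there is a nonzero central projection $z$ with $zM_\varphi$ semifinite, so there is a faithful normal semifinite trace on $zM_\varphi$. I need to produce a nonzero tracial functional below $\varphi$, and this is the main obstacle, since the semifinite trace need not be dominated by $\varphi$. My first attempt is to reduce to the finite case. The state $\omega_z=\omega_\varphi(z\,\cdot)/\omega_\varphi(z)$ is stationary, as in the proof of Theorem~\ref{thm:extreme-factor}. Its GNS algebra is $zM_\varphi$, which is semifinite. I would then show that a faithful normal stationary state on a semifinite algebra with $\mathrm{Fix}(T)$ equal to the center forces the algebra to be finite, via the following argument. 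Write $\omega_z=\tau(k\,\cdot)$ with a positive density $k$ affiliated with $zM_\varphi$. Stationarity then says $\sum\mu(g)U_gkU_g^*=k$. Since $\tau$ is invariant under the inner automorphisms $\mathrm{Ad}\,U_g$, the same Hilbert-space harmonic argument as in Lemma~\ref{lem:fixed-points} and Proposition~\ref{prop:closed.face} shows that $k$ is $\Gamma$-invariant. To make this rigorous, I would apply it to bounded spectral truncations of $k$, or to $k^{1/2}$ in $L^2(\tau)$, where the $U_g$ act unitarily and strict convexity applies. Invariance makes $k$ central, so $\omega_z$ is tracial, giving a nonzero tracial functional $\omega_\varphi(z\,\cdot)\le\varphi$. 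Thus $\varphi$ is not purely nontracial.

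Conversely, suppose a nonzero tracial functional $\tau\le\varphi$ exists. By Proposition~\ref{prop:gns.radon.nikodym} it extends to a normal positive functional $\tilde\tau$ on $M_\varphi$. This extension is a trace by the density argument used in the first part. Its support is a nonzero central projection $z$, and $zM_\varphi$ carries the faithful normal tracial state $\tilde\tau$. So $zM_\varphi$ is finite and $M_\varphi$ is not of type III. The delicate step in this whole plan remains the invariance of the unbounded density $k$.
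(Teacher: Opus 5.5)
Your first equivalence, and the implication ``a nonzero tracial $\tau\le\varphi$ exists $\Rightarrow$ $M_\varphi$ is not of type~III'', are correct and essentially the paper's argument. Comparing $\omega_\varphi$ with $\omega_\varphi\circ\mathrm{ctr}$ through $E$ is the paper's identification $E=\mathrm{Tr}_Z$ in other words.

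The gap is the step you flag yourself: the $\Gamma$-invariance of the density $k$ of $\omega_z$ with respect to the semifinite trace $\tau$ on $zM_\varphi$. The identity $k=\sum_g\mu(g)U_gkU_g^*$ holds only in $L^1(zM_\varphi,\tau)$. That norm is not strictly convex, so the averaging argument of Lemma~\ref{lem:fixed-points} and Proposition~\ref{prop:closed.face} does not apply to $k$. Neither proposed repair produces a harmonic vector in a Hilbert space.
\begin{itemize}
\item Since $x\mapsto x^{1/2}$ is not affine, nothing a priori forces $k^{1/2}\in L^2(\tau)$ to be fixed by $\sum_g\mu(g)\operatorname{Ad}(U_g)$. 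Operator concavity gives only $\sum_g\mu(g)U_gk^{1/2}U_g^*\le k^{1/2}$. Because $k^{1/2}$ need not be $\tau$-integrable when $\tau$ is infinite, you cannot take traces to upgrade this to equality.
\item For truncations such as $\min(k,c)$, harmonicity in the commutative case comes from pointwise Jensen plus conservation of the integral. But $t\mapsto\min(t,c)$ is not operator concave, so noncommutatively you have no comparison at all between $\sum_g\mu(g)U_g\min(k,c)U_g^*$ and $\min(k,c)$.
\end{itemize}
As written, the implication ``$M_\varphi$ not of type~III $\Rightarrow$ $\varphi$ not purely nontracial'' is therefore unproved. Your intermediate claim, that $\omega_z$ is tracial on the whole semifinite summand, is also stronger than what is needed.

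The paper avoids densities altogether. Take $0\ne p\in(zM_\varphi)_+$ with $\tau(p)<\infty$.
\begin{itemize}
\item Because $z$ is central and fixed by every $\operatorname{Ad}(U_g)$, one has $E_N(p)\in zM_\varphi$, hence $E(p)\in zM_\varphi\cap Z(M_\varphi)$.
\item Invariance $\tau\circ T_\varphi=\tau$, together with lower semicontinuity of the normal trace along the subnet defining $E$, gives $\tau(E(p))\le\tau(p)<\infty$.
\item Faithfulness of $\omega_\varphi$ gives $\omega_\varphi(E(p))=\omega_\varphi(p)>0$, so $E(p)\ne0$.
\end{itemize}
A spectral projection $q=1_{[\varepsilon,\infty)}(E(p))\neq0$ is then a central projection of finite trace, so $qM_\varphi$ is finite. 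Your own first-part argument, applied to $qM_\varphi$ with the state $\omega_\varphi(q\,\cdot)/\omega_\varphi(q)$, then shows that $a\mapsto\omega_\varphi(q\pi_\varphi(a))$ is a nonzero tracial functional below $\varphi$.

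If you prefer to keep your route, you need an operator concave bounded truncation such as $f_c(t)=ct/(c+t)$.
\begin{itemize}
\item Operator concavity gives $\sum_g\mu(g)U_gf_c(k)U_g^*\le f_c(k)$.
\item Both sides are $\tau$-integrable with equal trace, since $f_c(k)\le k$. Faithfulness of $\tau$ then forces equality.
\item Now $f_c(k)\in L^2(\tau)$ is a genuinely harmonic vector, so strict convexity applies. Letting $c$ vary shows that $k$ is affiliated with the center.
\end{itemize}
This works, but it needs care with unbounded operators, and it is not what you wrote.
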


\begin{proof}
    Retain the notation $E_N$ and $E$.
    For the first part, assume $\varphi$ is a trace. The normal extension $\omega_{\varphi}$ is a faithful normal state on $M_{\varphi}$ by Lemma~\ref{lem:faithful.normal.state.on.vN}, and it is tracial
    since $\pi_{\varphi}(A)$ is ultraweakly dense in $M_{\varphi}$.
    In particular,
    $M_{\varphi}$ is tracial and thus finite.
    For the other direction, assume $M_{\varphi}$ is finite, and let
    \mbox{$\operatorname{Tr}_Z\colon M_{\varphi}\to Z(M_{\varphi})$} be its center-valued trace.
    Since $\operatorname{Tr}_Z$ is invariant under the conjugation action,
    \mbox{$\operatorname{Tr}_Z\circ T_{\varphi}=\operatorname{Tr}_Z$}. In particular,
    \mbox{$\operatorname{Tr}_Z\circ E_N=\operatorname{Tr}_Z$} for every \mbox{$N\in\mathbb{N}$},
    and normality gives \mbox{$\operatorname{Tr}_Z\circ E=\operatorname{Tr}_Z$}. Since
    \mbox{$\operatorname{Tr}_Z|_{Z(M_{\varphi})}=\mathrm{id}$}, for every \mbox{$x\in M_{\varphi}$},
    \[
    E(x)=\operatorname{Tr}_Z(E(x))=\operatorname{Tr}_Z(x).
    \]
    In particular, \mbox{$\operatorname{Tr}_Z=E$}, and thus by stationarity,
    \mbox{$\omega_{\varphi}\circ \operatorname{Tr}_Z=\omega_{\varphi}$}. Now given \mbox{$x,y\in M_{\varphi}$},
    \[
    \omega_{\varphi}(xy)=\omega_{\varphi}(\operatorname{Tr}_Z(xy))
    =\omega_{\varphi}(\operatorname{Tr}_Z(yx))=\omega_{\varphi}(yx),
    \]
    and $\omega_{\varphi}$ is a faithful, normal tracial state on $M_{\varphi}$. Restricting back to $\pi_{\varphi}(A)$, we have \mbox{$\varphi\in T(A)$}.

    \noindent For the second part, assume $M_{\varphi}$ is of type $\mathrm{III}$, and let \mbox{$0\leq \tau\leq \varphi$} be a positive tracial functional on $A$. By the GNS Radon--Nikodym theorem, there exists a positive contraction \mbox{$h\in\pi_{\varphi}(A)'$} such that, letting \mbox{$\zeta=h^{\frac{1}{2}}\xi_{\varphi}$},
    $$\tau(a)=\langle \pi_{\varphi}(a)\zeta,\zeta\rangle.$$ 
    Consider the normal extension of $\tau$ to $M_{\varphi}$ given by
    $$\omega_{\tau}(x)=\langle x\zeta,\zeta\rangle.$$
    Since $\tau$ is tracial on $A$ and $\omega_{\tau}$ is normal, ultraweak density shows
    that $\omega_{\tau}$ is a normal trace on $M_{\varphi}$. Since type $\mathrm{III}$ von
    Neumann algebras do not admit nonzero normal finite traces,
    \mbox{$\omega_{\tau}\equiv 0$}, and therefore $\tau\equiv 0$.
    Conversely, assume that \mbox{$\varphi\in T(A)^{\perp}$}.
By the type decomposition theorem, there exists a central projection \mbox{$z_{\mathrm{sf}}\in Z(M_{\varphi})$} 
such that
\[
M_{\varphi}
=
z_{\mathrm{sf}}M_{\varphi}\oplus (1-z_{\mathrm{sf}})M_{\varphi},
\]
where $z_{\mathrm{sf}}M_{\varphi}$ is semifinite and $(1-z_{\mathrm{sf}})M_{\varphi}$ is of type~$\mathrm{III}$.
Assume \mbox{$z_{\mathrm{sf}}\neq 0$}.
Since $z_{\mathrm{sf}}M_{\varphi}$ is semifinite, there exists a faithful
normal semifinite trace
\[
\mathrm{Tr}\colon (z_{\mathrm{sf}}M_{\varphi})_+\to [0,\infty].
\]
Let \mbox{$0\neq p\in (z_{\mathrm{sf}}M_{\varphi})_+$} be such that $\mathrm{Tr}(p)<\infty$.
Observe that, since \mbox{$z_{\mathrm{sf}}\in Z(M_{\varphi})$} and \mbox{$p=z_{\mathrm{sf}}p$},
$$T_{\varphi}(p)=T_{\varphi}(z_{\mathrm{sf}}p)=z_{\mathrm{sf}}T_{\varphi}(p),$$
and therefore \mbox{$E(p)\in z_{\mathrm{sf}}M_{\varphi}\cap Z(M_{\varphi})$}, which equals
\mbox{$Z(z_{\mathrm{sf}}M_{\varphi})$}.
Since $\mathrm{Tr}$ is conjugation-invariant, \mbox{$\mathrm{Tr}\circ T_{\varphi}=\mathrm{Tr}$}, and therefore
\mbox{$\mathrm{Tr}(E_N(p))=\mathrm{Tr}(p)$} for every \mbox{$N\in\mathbb{N}$}. By the lower
semicontinuity of $\mathrm{Tr}$ along the subnet defining $E$, we see that
\[
\mathrm{Tr}(E(p))\leq \liminf_\lambda\mathrm{Tr}(E_{N_\lambda}(p))
=\mathrm{Tr}(p)<\infty.
\]
Moreover, \mbox{$E(p)\neq 0$} since $$0<\omega_{\varphi}(p)=\omega_{\varphi}(E(p)).$$
Thus, we can replace $p$ with $E(p)$ and assume \mbox{$p\in Z(M_{\varphi})$}. Lastly,
take $\varepsilon>0$ such that
\mbox{$\widetilde p=1_{[\varepsilon,\infty)}(p)\neq 0$}. Since
\mbox{$\mathrm{Tr}(\widetilde p)\leq \varepsilon^{-1}\mathrm{Tr}(p)<\infty$}, after replacing
$p$ with $\widetilde p$ we may assume that $p$ is a nonzero central projection in
$M_{\varphi}$ with finite trace. In particular, $pM_{\varphi}$ is a finite von Neumann
algebra that admits the $\mu$-stationary state
$$\omega_{\mathrm{sf}}(x)=\frac{\omega_{\varphi}(px)}{\omega_{\varphi}(p)}.$$
By the same argument as in the first part, applied to $pM_{\varphi}$, this implies that
$\omega_{\mathrm{sf}}$ is a faithful normal tracial state on $pM_{\varphi}$. Define
\mbox{$\tau(a)=\omega_{\varphi}(p\pi_{\varphi}(a))$} for $a\in A$. Then $\tau$ is a positive tracial functional on $A$, and for every $a\in A_+$,
$$\varphi(a)-\tau(a)=\omega_{\varphi}(\pi_{\varphi}(a))-\omega_{\varphi}(p\pi_{\varphi}(a))=\omega_{\varphi}((1-p)\pi_{\varphi}(a))\geq 0.$$
Thus \mbox{$0\leq\tau\leq\varphi$}. Since \mbox{$\varphi\in T(A)^\perp$}, we have $\tau=0$, which
contradicts \mbox{$\tau(1)=\omega_{\varphi}(p)>0$}. Therefore \mbox{$z_{\mathrm{sf}}=0$}, and $M_{\varphi}$ is
of type~$\mathrm{III}$.
\end{proof}
\begin{cor}
Let \mbox{$\varphi\in\partial_eS_\mu(A)$}. If $\varphi$ is tracial, then $M_\varphi$ is a finite
factor. If $\varphi$ is not tracial, then $M_\varphi$ is a type~$\mathrm{III}$ factor.
\end{cor}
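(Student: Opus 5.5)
The corollary follows by combining Theorem~\ref{thm:extreme-factor} with Theorem~\ref{thm:finite-typeIII}. The one extra ingredient is that, for an extreme stationary state, ``not tracial'' already forces ``purely nontracial''.

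Let $\varphi\in\partial_eS_\mu(A)$. By the implication (1)$\Rightarrow$(2) of Theorem~\ref{thm:extreme-factor}, $M_\varphi$ is a factor. If $\varphi$ is tracial, the first equivalence of Theorem~\ref{thm:finite-typeIII} shows that $M_\varphi$ is finite, so it is a finite factor.

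Now suppose $\varphi$ is not tracial. I will show that $\varphi$ is purely nontracial, i.e.\ $\varphi\in T(A)^\perp$. Suppose instead that $0\leq\tau\leq\varphi$ for some nonzero positive tracial functional $\tau$.

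If $\tau(1)=1$, then $\varphi-\tau$ is a positive functional vanishing at $1$, hence zero. So $\varphi=\tau$ is tracial, a contradiction.

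Otherwise $t=\tau(1)\in(0,1)$, and we write
\[
\varphi=t\rho+(1-t)\psi,\qquad \rho=t^{-1}\tau,\qquad \psi=(1-t)^{-1}(\varphi-\tau).
\]
Here $\rho\in T(A)\subset S_\mu(A)$. Also $\psi$ is a state, and it is stationary because both $\varphi$ and $\rho$ are. Extremality of $\varphi$ then forces $\varphi=\rho$, again contradicting that $\varphi$ is not tracial.

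Hence $\varphi\in T(A)^\perp$. The second equivalence of Theorem~\ref{thm:finite-typeIII} then gives that $M_\varphi$ is of type~$\mathrm{III}$, and being a factor, it is a type~$\mathrm{III}$ factor.

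The only step that needs care is this dichotomy, and it is elementary. Alternatively, one could use that $T(A)$ is a face (Proposition~\ref{prop:closed.face}) together with the decomposition of Theorem~\ref{thm:trace-decomposition}. The direct argument above avoids needing the simplex hypothesis explicitly, although the simplex property does hold here by Theorem~\ref{thm:stationaries.form.simplex}.
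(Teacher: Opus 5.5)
Your proof is correct and follows the paper's route: the paper obtains the corollary simply by combining Theorem~\ref{thm:extreme-factor} (extreme $\Rightarrow$ factor) with Theorem~\ref{thm:finite-typeIII}. The one step the paper leaves implicit is that an extreme, non-tracial stationary state lies in $T(A)^\perp$, and your extremality argument supplies it. That argument splits off $\rho=t^{-1}\tau\in T(A)\subset S_\mu(A)$ and uses that $\psi$ is stationary because $\tau\circ\Phi_\mu=\tau$.
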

\begin{cor}
    Let \mbox{$\varphi\in S_{\mu}(A)\setminus\left(T(A)\cup T(A)^{\perp}\right)$}, and write 
    $$\varphi=t\tau+(1-t)\psi,\quad \tau\in T(A),\,\psi\in T(A)^{\perp},\quad t\in (0,1)$$
    as in Theorem \ref{thm:tracial.non.tracial.decomp}. Then \mbox{$M_{\varphi}\cong M_{\tau}\oplus M_{\psi}$}. In particular, $M_{\varphi}$ has no semifinite, non-finite part in its type decomposition.
\end{cor}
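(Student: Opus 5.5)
We would realize the decomposition inside $M_\varphi$ by a central projection. The natural candidate is the central support of the tracial part. Let $z_{\mathrm{sf}}\in Z(M_\varphi)$ be the semifinite central projection from the type decomposition. By the argument in the proof of Theorem~\ref{thm:finite-typeIII}, $z_{\mathrm{sf}}M_\varphi$ can have no properly infinite semifinite part that carries a stationary faithful state. So the first step is to show that $z:=z_{\mathrm{sf}}$ is in fact the finite central summand and that $(1-z)M_\varphi$ is type~$\mathrm{III}$. Since $\omega_\varphi$ is faithful (Lemma~\ref{lem:separating}), we have $0<\omega_\varphi(z)<1$: if $\omega_\varphi(z)$ were $0$ then $\varphi$ would be purely nontracial, and if it were $1$ then $\varphi$ would be tracial, both by Theorem~\ref{thm:finite-typeIII}.

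Next, as in the $(1)\Rightarrow(2)$ part of Theorem~\ref{thm:extreme-factor}, define stationary states $\varphi_z$ and $\varphi_{1-z}$ on $A$ by $\varphi_z(a)=\omega_\varphi(z\pi_\varphi(a))/\omega_\varphi(z)$, and similarly for $1-z$. Stationarity holds because $z$ is fixed by every $\widetilde\alpha_g$. This gives $\varphi=\lambda\varphi_z+(1-\lambda)\varphi_{1-z}$ with $\lambda=\omega_\varphi(z)$.

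We then identify the GNS data. The GNS space of $\varphi_z$ is $zH_\varphi$, with cyclic vector $z\xi_\varphi/\|z\xi_\varphi\|$; this vector is cyclic because $z\in M_\varphi$ is central and $\overline{\pi_\varphi(A)\xi_\varphi}=H_\varphi$. Hence $M_{\varphi_z}\cong zM_\varphi$, and likewise $M_{\varphi_{1-z}}\cong(1-z)M_\varphi$. By Theorem~\ref{thm:finite-typeIII}, $\varphi_z$ is tracial because $zM_\varphi$ is finite, and $\varphi_{1-z}$ is purely nontracial because $(1-z)M_\varphi$ is type~$\mathrm{III}$. The uniqueness clause of Theorem~\ref{thm:tracial.non.tracial.decomp} then forces $t=\lambda$, $\tau=\varphi_z$ and $\psi=\varphi_{1-z}$. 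It follows that $M_\varphi=zM_\varphi\oplus(1-z)M_\varphi\cong M_\tau\oplus M_\psi$, and each summand is finite or type~$\mathrm{III}$, which gives the final assertion.

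The main obstacle is the first step: showing that the whole semifinite part $z_{\mathrm{sf}}M_\varphi$ is finite, rather than only containing nonzero finite central pieces as the proof of Theorem~\ref{thm:finite-typeIII} shows. I would handle it as follows. Let $z_f$ be the largest central projection with $z_fM_\varphi$ finite, and put $q=1-z_f$. The same central-cutdown trick shows that $\varphi_q$ is a stationary state with $M_{\varphi_q}\cong qM_\varphi$. If $qM_\varphi$ had a nonzero semifinite summand, the argument in the proof of Theorem~\ref{thm:finite-typeIII} would produce a nonzero central projection $p\le q$ with $pM_\varphi$ finite (a finite-trace central projection). That contradicts the maximality of $z_f$. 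So $qM_\varphi$ is type~$\mathrm{III}$. Alternatively, one can avoid the type-theoretic maximality and take $z$ to be the support in $Z(M_\varphi)$ of the normal extension of $t\tau$ obtained from the GNS Radon--Nikodym theorem (Proposition~\ref{prop:gns.radon.nikodym}); I would keep this route as a fallback.
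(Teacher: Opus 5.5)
Your argument is correct, but it runs in the opposite direction from the paper's.

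\textbf{The paper's route.} It starts from the decomposition $\varphi=t\tau+(1-t)\psi$ given by Theorem~\ref{thm:tracial.non.tracial.decomp} and applies Theorem~\ref{thm:finite-typeIII} to $\tau$ and $\psi$ separately, so $M_\tau$ is finite and $M_\psi$ is of type~$\mathrm{III}$. A disjointness result from Dixmier then shows that the commutant of $\pi_\tau\oplus\pi_\psi$ splits, so $(\pi_\tau\oplus\pi_\psi)(A)''=M_\tau\oplus M_\psi$. Lemma~\ref{lem:quasi.equiv.of.GNS.for.convex.comb} transfers this to $M_\varphi$, and the ``no semifinite non-finite part'' clause comes for free.

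\textbf{Your route.} You instead cut $M_\varphi$ internally by its maximal finite central projection $z_f$. The price is the extra step showing that $(1-z_f)M_\varphi$ is of type~$\mathrm{III}$, and your maximality argument handles this correctly. Two small points:
\begin{itemize}
\item You can skip $\varphi_q$. Since $T_\varphi$ commutes with multiplication by central projections, the semifinite argument of Theorem~\ref{thm:finite-typeIII} runs verbatim inside $M_\varphi$ on the semifinite part of $(1-z_f)M_\varphi$.
\item You should take $z=z_f$ from the start, since the claim in your first paragraph is only justified by this later step.
\end{itemize}

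\textbf{What each buys.} Your approach gives more:
\begin{itemize}
\item an explicit formula $t=\omega_\varphi(z_f)$ and $\tau=\omega_\varphi(z_f\pi_\varphi(\cdot))/t$;
\item a spatial central splitting $H_\varphi=z_fH_\varphi\oplus(1-z_f)H_\varphi$ that realizes the isomorphism;
\item no appeal to disjointness theory;
\item existence of a tracial/purely nontracial splitting without Choquet theory, which enters only through the uniqueness clause you use to identify your pieces with $\tau$ and $\psi$.
\end{itemize}
The paper's route is shorter. It needs nothing about $M_\varphi$ beyond Theorem~\ref{thm:finite-typeIII} applied to the two summands.
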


\begin{proof}
   First, recall that two representations $\rho,\,\pi$ of $A$ are called \textit{disjoint} if no nontrivial subrepresentation of $\pi$ is unitarily equivalent to a subrepresentation of $\rho$ \cite[Section 5.2]{dixmier1977cstar}. By Theorem \ref{thm:finite-typeIII},  $M_{\tau}$ is finite, and  $M_{\psi}$ is type $\mathrm{III}$. By \cite[5.6.3]{dixmier1977cstar}, $\pi_{\tau}$ and $\pi_{\psi}$ are disjoint. Now let \mbox{$\Pi=\pi_{\tau}\oplus\pi_{\psi}$} and $a_0\in A$. If $\begin{pmatrix}
       x&y
       \\
       z&w
   \end{pmatrix}\in \Pi(A)'$, then
   $$\begin{pmatrix}
       \pi_{\tau}(a)x&\pi_{\tau}(a)y\\
       \pi_{\psi}(a)z&\pi_{\psi}(a)w
   \end{pmatrix}=\begin{pmatrix}
       \pi_{\tau}(a)&0
       \\
       0&\pi_{\psi}(a)
   \end{pmatrix}\begin{pmatrix}
       x&y
       \\
       z&w
   \end{pmatrix}=\begin{pmatrix}
       x&y
       \\
       z&w
   \end{pmatrix}\begin{pmatrix}
       \pi_{\tau}(a)&0
       \\
       0&\pi_{\psi}(a)
   \end{pmatrix}=\begin{pmatrix}
       x\pi_{\tau}(a)&y\pi_{\psi}(a)
       \\
       z\pi_{\tau}(a)&w\pi_{\psi}(a)
   \end{pmatrix}.$$
   In particular, \mbox{$x\in \pi_{\tau}(A)'$}, \mbox{$w\in\pi_{\psi}(A)'$}, and 
   $y,z$ are intertwiners of $\pi_{\tau}$ and $\pi_{\psi}$. Since these representations are disjoint, $y=z=0$, and
   $$\Pi(A)'=\pi_{\tau}(A)'\oplus\pi_{\psi}(A)'.$$
   The same calculation gives $$\Pi(A)''=M_{\tau}\oplus M_{\psi}.$$ Lastly, by Lemma \ref{lem:quasi.equiv.of.GNS.for.convex.comb}, 
   $$\Pi(A)''\cong M_{\varphi},$$
   and the conclusion follows.
\end{proof}
\section{Examples and applications}

We finish with a few results concerning the geometry of the stationary state simplex under an inner action of a generating subgroup $\Gamma\leq U(A)$ on $A$.

\begin{cor}
Let \mbox{$2\leq d\leq\infty$}, let \mbox{$\mu\in\prob(F_d)$} be a generating measure, and let $F_d$
act on $C^*(F_d)$ by conjugation. Then $S_\mu(C^*(F_d))$ has a Poulsen face.
\end{cor}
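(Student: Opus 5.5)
The natural candidate for the Poulsen face is the tracial face $T(C^*(F_d))=S^{F_d}(C^*(F_d))$. By Proposition~\ref{prop:closed.face}, $S^{\Gamma}(A)$ is a closed face of $S_\mu(A)$, and in the inner generating setting $S^\Gamma(A)=T(A)$. By Theorem~\ref{thm:stationaries.form.simplex}, $S_\mu(C^*(F_d))$ is a Choquet simplex, so its closed face $T(C^*(F_d))$ is itself a metrizable Choquet simplex. It is nontrivial, since it contains both the trivial character and the regular trace $\delta_e$. It therefore remains only to show that $\partial_e T(C^*(F_d))$, which is the set of characters of $F_d$, is weak$^*$ dense in $T(C^*(F_d))$. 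This density statement is essentially known: the trace simplex of $C^*(F_d)$ is the Poulsen simplex, a result due to Orovitz--Slutsky--Vigdorovich for $d\geq 2$. The plan is to cite this result, and, if a self-contained argument is wanted, to sketch the following proof of density.

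Tracial states on $C^*(F_d)$ are exactly the normalized positive-definite class functions on $F_d$, and weak$^*$ convergence means pointwise convergence on $F_d$. Given $\tau\in T(C^*(F_d))$, a finite set $S\subset F_d$ and $\varepsilon>0$, we need a character $\chi$ with $|\chi(g)-\tau(g)|<\varepsilon$ for all $g\in S$. The approximation proceeds in three steps.

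\begin{enumerate}
    \item Since $F_d$ is residually finite and has the relevant approximation properties, the convex hull of characters factoring through finite quotients (normalized characters of finite-dimensional representations) is dense in $T(C^*(F_d))$. We may therefore assume that $\tau=\sum_i t_i\chi_i$ is a finite convex combination of finite-dimensional characters $\chi_i=\tfrac{1}{n_i}\mathrm{tr}\circ\pi_i$.
    \item We realize this convex combination approximately by a single finite-dimensional representation. Pass to amplified representations $\pi_i^{(k_i)}$ whose dimensions $k_in_i$ are approximately proportional to $t_i$, and form their direct sum $\pi$ of dimension $D$.
    \item The normalized trace of $\pi$ approximates $\tau$, but it is a convex combination rather than an extreme point. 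To make it extremal, perturb the images of the free generators: replace $\pi(a_1)$ by $\pi(a_1)V$ with $V$ a unitary close to $1$ in normalized Hilbert--Schmidt norm but mixing the blocks. This uses freeness, since the generator images may be chosen arbitrarily.
\end{enumerate}

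The perturbed representation $\pi'$ has irreducible image, so $\tfrac1D\mathrm{tr}\circ\pi'$ is an extreme trace, i.e.\ a character. Its values on $S$ stay within $\varepsilon$ of $\tau$, because word values move by at most (word length)$\times\|V-1\|_2$ in normalized trace. For $d=\infty$ the same argument works, perturbing only one generator, since $S$ involves finitely many generators.

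The main obstacle is step (3): ensuring the perturbation yields irreducibility, hence a factor and an extreme point, while remaining $\|\cdot\|_2$-small. The first approach I would try is choosing $V=\exp(i\delta H)$ with $H$ a generic self-adjoint matrix, for which irreducibility holds for generic $H$. If that becomes delicate, I would fall back on citing the known Poulsen property of the trace simplex of $C^*(F_d)$ directly.
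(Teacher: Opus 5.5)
Your main argument is the paper's own proof. $T(C^*(F_d))=S^{F_d}(C^*(F_d))$ is a closed face of $S_\mu(C^*(F_d))$ by Proposition~\ref{prop:closed.face}, and it is the Poulsen simplex by the Orovitz--Slutsky--Vigdorovich theorem. That theorem is exactly what the paper's one-line proof cites, and this part of your proposal is correct and complete.

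The optional self-contained sketch does not work, and you should not offer it as a backup.

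\textbf{Step (1) is false.} A trace $\tau$ on $C^*(F_d)$ lies in the weak$^*$-closed convex hull of finite-dimensional traces if and only if $(M_\tau,\tau)$ embeds trace-preservingly into $R^\omega$. One direction comes from taking ultraproducts of the matrix models. The other comes from lifting the images of the free generators to matrices, which freeness permits. Connes' embedding problem has a negative answer (MIP$^*$=RE), so there are traces on $C^*(F_d)$ that are not Connes-embeddable. Hence finite-dimensional traces are not weak$^*$ dense in $T(C^*(F_d))$. Residual finiteness of $F_d$, or Choi's theorem that $C^*(F_d)$ is residually finite-dimensional, gives density of finite-dimensional \emph{states} in $S(C^*(F_d))$. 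It does not give density of finite-dimensional \emph{traces} in $T(C^*(F_d))$.

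\textbf{Consequence for steps (2)--(3).} Even if carried out perfectly, they only show that characters are dense in the closed convex set of Connes-embeddable traces. Approximating the remaining traces by characters needs a genuinely infinite-dimensional perturbation argument, and that is the real content of the cited theorem. So the citation is the essential step, not a fallback.

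\textbf{A smaller point in step (3).} Perturbing only $\pi(a_1)$ cannot give irreducibility when the other generators act by scalars. For example, if every $\chi_i$ is trivial on $a_2$, then $\pi(a_2)=1$. You would have to perturb all the generators.
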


\begin{proof}
By \cite[Theorem~1.1]{orovitz2024free}, the trace simplex $T(C^*(F_d))$ is the Poulsen
simplex. 
\end{proof}

For comparison, Glasner and Weiss proved that for a property $(T)$ group $\Gamma$ acting on a compact Hausdorff space $X$, $\prob^\Gamma(X)$ is always Bauer \cite{glasner1997kazhdan}. Moreover, Levit, Slutsky, and Vigdorovich proved that the trace simplex of a
property~$(T)$ group is Bauer and that every finite-dimensional character is isolated
in its extreme boundary \cite[Corollaries~1.6 and~1.8]{levit2023spectral}. The next
result shows that the trivial character remains isolated in the extreme boundary of the
larger stationary state simplex.

\begin{cor}
Let $\Gamma$ be a nontrivial countable discrete group with property~$(T)$.
Let \mbox{$\mu\in\prob(\Gamma)$} be a generating measure. Suppose $\Gamma$ acts on
$C^*(\Gamma)$ by conjugation. Then $S_\mu(C^*(\Gamma))$ is not a Poulsen simplex.
\end{cor}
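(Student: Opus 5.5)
We will show that the trivial character $\tau_0$, given by $\tau_0(u_g)=1$ for every $g\in\Gamma$, is an isolated point of $\partial_eS_\mu(C^*(\Gamma))$. Because $\Gamma$ is nontrivial, $S_\mu(C^*(\Gamma))$ contains at least two points: $\tau_0$ and the regular trace. So once $\tau_0$ is known to be isolated, $\partial_eS_\mu$ cannot be dense unless it equals $\{\tau_0\}$, which is false. Hence the simplex is not Poulsen.

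First, $\tau_0$ is extreme. It is a character, so $M_{\tau_0}=\mathbb{C}$ is a factor, and Theorem~\ref{thm:extreme-factor} applies. Alternatively, $\tau_0$ is a multiplicative state and therefore extreme already in $S(C^*(\Gamma))$.

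Next, suppose $\varphi_n\in\partial_eS_\mu(C^*(\Gamma))$ and $\varphi_n\to\tau_0$ weak$^*$. We want $\varphi_n=\tau_0$ for all large $n$. Since $\Gamma$ has property~$(T)$, it is finitely generated. Fix a Kazhdan pair $(Q,\kappa)$, with $Q$ finite. Weak$^*$ convergence gives $\varphi_n(u_g)\to1$ for $g\in Q$. Hence
\[
\|\pi_{\varphi_n}(u_g)\xi_{\varphi_n}-\xi_{\varphi_n}\|^2=2-2\operatorname{Re}\varphi_n(u_g)\to0 .
\]
So for large $n$ the unit vector $\xi_{\varphi_n}$ is $(Q,\kappa)$-invariant for the unitary representation $\pi_n=\pi_{\varphi_n}|_\Gamma$. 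Property~$(T)$ then produces a nonzero $\Gamma$-invariant vector $\eta_n\in H_{\varphi_n}$, in fact one close to $\xi_{\varphi_n}$ via the projection onto invariant vectors. The line $\mathbb{C}\eta_n$ is a one-dimensional subrepresentation of $\pi_n$ on which every $u_g$ acts trivially, that is, a copy of the trivial character $\pi_{\tau_0}$.

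Now use factoriality. By Theorem~\ref{thm:extreme-factor}, $M_{\varphi_n}$ is a factor, so $\pi_{\varphi_n}$ is a factor representation. Let $P$ be the projection onto the invariant vectors. Then $P\in\pi_{\varphi_n}(A)'$, and $P$ is also the strong limit of averages of the $\pi_n(u_g)$, so $P\in M_{\varphi_n}$. Hence $P$ is central and nonzero, and factoriality forces $P=1$. Thus every $u_g$ acts as the identity, $\pi_{\varphi_n}$ is trivial, and $\varphi_n=\tau_0$.

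Two points need care. The first is checking that $P$ lies in $M_{\varphi_n}$. It does, because $P$ is the unique fixed point of norm-minimal type in the closed convex hull of $\{\pi_n(u_g)\}$, which is contained in $M_{\varphi_n}$. The second is the routine verification that the Kazhdan-pair estimate applies to $\xi_{\varphi_n}$ for large $n$. This step only needs the finitely many conditions $\varphi_n(u_g)\to 1$ for $g\in Q$, so weak$^*$ convergence suffices. Therefore $\tau_0$ is an isolated extreme point and $S_\mu(C^*(\Gamma))$ is not Poulsen.
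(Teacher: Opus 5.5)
Your proof is correct and is essentially the paper's argument. In both, Theorem~\ref{thm:extreme-factor} makes the projection onto the $\Gamma$-invariant vectors a central projection of the factor $M_\varphi$, hence $0$ or $1$, which isolates the trivial character in $\partial_eS_\mu(C^*(\Gamma))$. The paper realizes this projection as $\pi_\varphi(p_\Gamma)$ for the Kazhdan projection $p_\Gamma\in C^*(\Gamma)$ and gets isolation from weak$^*$ continuity of $\varphi\mapsto\varphi(p_\Gamma)$, whereas you get nonzero invariant vectors for nearby extreme states directly from a Kazhdan pair. Your ``norm-minimal fixed point'' justification of $P\in M_{\varphi_n}$ is imprecise; it is cleaner to note that $\bigcap_{g}\ker\bigl(\pi_{\varphi_n}(u_g)-1\bigr)$ is invariant under the self-adjoint set $\pi_{\varphi_n}(\Gamma)'$, so $P\in\pi_{\varphi_n}(\Gamma)''=M_{\varphi_n}$.
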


\begin{proof}
Let \mbox{$p_\Gamma\in C^*(\Gamma)$} be the Kazhdan projection. Thus $p_\Gamma$ is a central
projection such that, for every unitary representation $\pi$ of $\Gamma$,
$\pi(p_\Gamma)$ is the orthogonal projection onto the invariant vectors of $\pi$
\cite{valette1984minimal}.
Let \mbox{$\varphi\in\partial_eS_\mu(C^*(\Gamma))$}. By
Theorem~\ref{thm:extreme-factor}, $M_\varphi$ is a factor. Since
$\pi_\varphi(p_\Gamma)$ is central in $M_\varphi$, it is either $0$ or $1$. If
\mbox{$\pi_\varphi(p_\Gamma)=1$}, then every vector in $H_\varphi$ is invariant under
$\pi_\varphi(\Gamma)$, and hence \mbox{$\varphi=1_\Gamma$}. Consequently,
\[
 \varphi(p_\Gamma)=
 \begin{cases}
  1,&\varphi=1_\Gamma,\\
  0,&\varphi\in\partial_eS_\mu(C^*(\Gamma))\setminus\{1_\Gamma\}.
 \end{cases}
\]
It follows that $1_\Gamma$ is isolated in the extreme boundary of
$S_\mu(C^*(\Gamma))$. Since a nontrivial Poulsen simplex has no isolated extreme points
\cite{lindenstrauss1978poulsen}, $S_\mu(C^*(\Gamma))$ is not Poulsen.
\end{proof}
Let $\varphi\in S_{\mu}(A)$, and let $(\pi_{\varphi}, H_{\varphi},\xi_{\varphi})$ be its GNS representation. By Lemma \ref{lem:faithful.normal.state.on.vN}, $\varphi$ extends to a normal faithful $\mu$-stationary state on $M_{\varphi}$. We define a norm on $M_{\varphi}$ by
$$\|x\|^2_{\omega_{\varphi}}=\omega_{\varphi}(x^*x)=\|x\xi_{\varphi}\|_{H_{\varphi}}^2,\quad x\in M_{\varphi}.$$
We also define the seminorm on $A$
$$\|a\|^2_{\varphi}=\varphi(a^*a)=\|\pi_{\varphi}(a)\|^2_{\omega_{\varphi}}.$$
\begin{prop}
    Assume there exists a finite set $F\subset\mathbb{N}$ and $\varepsilon>0$ such that for every $\varphi\in\partial_eS_{\mu}(A)$ and every $a\in A_{sa}$ with $\|a\|_{\varphi}=1$ and $\varphi(a)=0$,
    $$\max_{m\in F}\|\Phi^{(m)}_{\mu}(a)-a\|_{\varphi}\geq \varepsilon.$$
    Then $S_{\mu}(A)$ is a Bauer simplex.
    \label{prop:bauer.simplex.sufficient.condition}
\end{prop}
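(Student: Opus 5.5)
The plan is to show directly that $\partial_eS_\mu(A)$ is weak$^*$ closed. We already know $S_\mu(A)$ is a Choquet simplex by Theorem~\ref{thm:stationaries.form.simplex}, under the standing inner, generating assumptions. Since $S_\mu(A)$ is metrizable, it suffices to take a sequence $\varphi_k\in\partial_eS_\mu(A)$ converging weak$^*$ to some $\varphi\in S_\mu(A)$ and prove that $\varphi$ is extreme. I argue by contradiction and use Theorem~\ref{thm:extreme-factor}. If $\varphi$ is not extreme, then $M_\varphi$ is not a factor, so there is a nontrivial central projection $z\in Z(M_\varphi)=\mathrm{Fix}(T_\varphi)$ (Lemma~\ref{lem:fixed-points}). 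By faithfulness (Lemma~\ref{lem:separating}), $\lambda=\omega_\varphi(z)\in(0,1)$. The self-adjoint element $x=z-\lambda1$ then satisfies $\omega_\varphi(x)=0$, $T_\varphi(x)=x$, and $\|x\|_{\omega_\varphi}^2=\lambda(1-\lambda)>0$.

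Next I transfer this almost-invariant element to $A$. By Kaplansky density, applied to self-adjoint parts and the strong topology on bounded sets, I pick $a\in A_{sa}$ with $\|\pi_\varphi(a)-x\|_{\omega_\varphi}<\delta$; here $\delta$ is small and is fixed at the end. I need the iterates $T_\varphi^m$ to be contractions for $\|\cdot\|_{\omega_\varphi}$. This follows from Kadison--Schwarz together with stationarity (Proposition~\ref{prop:normal.stationarity}):
\[
\omega_\varphi(T^m_\varphi(y)^*T^m_\varphi(y))\le\omega_\varphi(T^m_\varphi(y^*y))=\omega_\varphi(y^*y).
\]
Since $T_\varphi^m(x)=x$ and $T_\varphi^m\pi_\varphi=\pi_\varphi\Phi_\mu^{(m)}$, we get $\|\Phi^{(m)}_\mu(a)-a\|_\varphi\le 2\delta$ for every $m$, in particular for every $m\in F$. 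Set $a_0=a-\varphi(a)1$. Because $\omega_\varphi(x)=0$, we have $|\varphi(a)|\le\delta$. Hence $\|a_0\|_\varphi\ge\sqrt{\lambda(1-\lambda)}-2\delta=:c$, which is positive for small $\delta$.

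Now I pass to the approximating extreme states. For each $k$ define
\[
b_k=\frac{a-\varphi_k(a)1}{\|a-\varphi_k(a)1\|_{\varphi_k}}.
\]
This is self-adjoint with $\varphi_k(b_k)=0$ and $\|b_k\|_{\varphi_k}=1$, provided the denominator is nonzero. Since $\Phi_\mu^{(m)}$ is unital, the scalar shifts cancel and
\[
\|\Phi_\mu^{(m)}(b_k)-b_k\|_{\varphi_k}=\frac{\|\Phi_\mu^{(m)}(a)-a\|_{\varphi_k}}{\|a-\varphi_k(a)\|_{\varphi_k}}.
\]
Both numerator and denominator are built from finitely many fixed elements of $A$: $a$, $a^2$, and $(\Phi^{(m)}_\mu(a)-a)^*(\Phi^{(m)}_\mu(a)-a)$ for $m\in F$. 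Weak$^*$ convergence therefore gives convergence of this ratio to $\|\Phi_\mu^{(m)}(a)-a\|_\varphi/\|a_0\|_\varphi\le 2\delta/c$. In particular the denominators are eventually nonzero.

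Choosing $\delta$ with $2\delta/c<\varepsilon$ at the start, for large $k$ we get $\max_{m\in F}\|\Phi^{(m)}_\mu(b_k)-b_k\|_{\varphi_k}<\varepsilon$. This contradicts the hypothesis applied to $\varphi_k\in\partial_eS_\mu(A)$, so $\varphi$ is extreme and $S_\mu(A)$ is Bauer. The only delicate points are the contractivity of $T_\varphi^m$ in the $\omega_\varphi$-seminorm and the bookkeeping that keeps $c$ bounded away from $0$ while $\delta\to0$. Everything else is weak$^*$ continuity on finitely many fixed elements, which is where finiteness of $F$ is used.
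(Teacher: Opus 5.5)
Your proposal is correct and follows essentially the same route as the paper: a non-extreme limit gives a nonscalar fixed central element, which you approximate in $\|\cdot\|_{\omega_\varphi}$ by a self-adjoint element of $A$ via Kaplansky, then transfer to the approximating extreme states by weak$^*$ continuity on finitely many elements. The differences are cosmetic. You use $z-\lambda 1$ for a central projection $z$ instead of a normalized central element, and you pass to the limit of the ratio instead of the paper's explicit $2\delta$, $3\delta$ bounds. You also justify the $\|\cdot\|_{\omega_\varphi}$-contractivity of $T_\varphi^m$ via Kadison--Schwarz and stationarity, which the paper uses without proof.
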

\begin{proof}
    Assume the contrary, and let $\varphi_n\in\partial_eS_{\mu}(A)$ such that 
    $$\varphi_n\xrightarrow[n\to\infty]{w-*}\varphi,\quad \varphi\not\in\partial_eS_{\mu}(A).$$ 
    By Theorem \ref{thm:extreme-factor}, $Z(M_{\varphi})\neq \mathbb{C}1$. In particular, there exists a self-adjoint element $z\in Z(M_{\varphi})$ that is non-scalar. Up to replacing $z$ with $\displaystyle\frac{z-\omega_{\varphi}(z)1}{\|z-\omega_{\varphi}(z)1\|_{\omega_{\varphi}}}$, we may assume 
    $$\omega_{\varphi}(z)=0,\,\|z\|_{\omega_{\varphi}}=1.$$
    Fix $0<\delta<\min\left\{\frac{\varepsilon}{6},\,\frac{1}{8}\right\}$.
    By the self-adjoint version of Kaplansky's theorem, we may take $a\in A_{sa}$ such that
    $$\|\pi_{\varphi}(a)\xi_{\varphi}-z\xi_{\varphi}\|_{H_{\varphi}}=\|\pi_{\varphi}(a)-z\|_{\omega_{\varphi}}<\delta.$$ Therefore,
    $$|\varphi(a)|=|\omega_{\varphi}(\pi_{\varphi}(a)-z)|\leq \|\pi_{\varphi}(a)-z\|_{\omega_{\varphi}}<\delta.$$
    Moreover, by the reverse triangle inequality, and since $\|z\|_{\omega_{\varphi}}=1$,
    $$1-\delta<\|a\|_{\varphi}<1+\delta.$$
    By Lemma \ref{lem:fixed-points}, $T_{\varphi}(z)=z$. Therefore, for every $m\in F$,
    \begin{align*}
\bigl\|\Phi_{\mu}^{(m)}(a)-a\bigr\|_{\varphi}
&=
\bigl\|
\pi_{\varphi}\bigl(\Phi_{\mu}^{(m)}(a)-a\bigr)
\bigr\|_{\omega_{\varphi}}\\
&=
\bigl\|
\pi_{\varphi}\bigl(\Phi_{\mu}^{(m)}(a)\bigr)
-\pi_{\varphi}(a)
\bigr\|_{\omega_{\varphi}}
\\
&=
\bigl\|
T_{\varphi}^{(m)}\bigl(\pi_{\varphi}(a)\bigr)
-\pi_{\varphi}(a)
\bigr\|_{\omega_{\varphi}}
\\
&\leq
\bigl\|
T_{\varphi}^{(m)}\bigl(\pi_{\varphi}(a)\bigr)
-T_{\varphi}^{(m)}(z)
\bigr\|_{\omega_{\varphi}}
+
\bigl\|z-\pi_{\varphi}(a)\bigr\|_{\omega_{\varphi}}
\\
&\leq
\bigl\|\pi_{\varphi}(a)-z\bigr\|_{\omega_{\varphi}}
+
\bigl\|z-\pi_{\varphi}(a)\bigr\|_{\omega_{\varphi}}
\\
&=
2\bigl\|z-\pi_{\varphi}(a)\bigr\|_{\omega_{\varphi}}
\\
&<2\delta.
\end{align*}
We used the fact that $T_{\varphi}$ is a contraction on $M_{\varphi}$ with respect to $\|\cdot\|_{\omega_{\varphi}}$. Since $F$ is finite and \mbox{$\varphi_n\xrightarrow[n\to\infty]{w-*}\varphi$}, for all sufficiently large $n$ we have
\begin{gather*}
\lvert\varphi_n(a)\rvert<2\delta,\\
1-2\delta
<\lVert a\rVert_{\varphi_n}
<1+2\delta,\\
\max_{m\in F}
\bigl\lVert\Phi_\mu^{(m)}(a)-a\bigr\rVert_{\varphi_n}
<3\delta.
\end{gather*}
Fix such an $n\in\mathbb{N}$, and replace $a$ with $b_n=a-\varphi_n(a)1\in A_{sa}$. Then
\begin{gather*}
\varphi_n(b_n)=0\quad \text{and}\quad
\max_{m\in F}
\bigl\lVert\Phi_\mu^{(m)}(b_n)-b_n\bigr\rVert_{\varphi_n}
<3\delta.
\end{gather*}
Moreover, 
$$\|b_n\|^2_{\varphi_n}=\varphi_n((a-\varphi_n(a)1)^2)=\|a\|_{\varphi_n}^2-|\varphi_n(a)|^2>(1-2\delta)^2-4\delta^2=1-4\delta>\frac{1}{2}.$$
In particular, $\|b_n\|_{\varphi_n}>\frac{1}{\sqrt{2}}$. Let $c_n=\frac{b_n}{\|b_n\|_{\varphi_n}}$. Then 
$$\varepsilon\leq \max_{m\in F}\|\Phi_{\mu}^{(m)}(c_n)-c_n\|_{\varphi_n}=\frac{1}{\|b_n\|_{\varphi_n}}\max_{m\in F}\|\Phi_{\mu}^{(m)}(b_n)-b_n\|_{\varphi_n}\leq 3\sqrt{2}\delta<6\delta<\varepsilon,$$
which is a contradiction.
\end{proof}
We now prove a theorem similar to \cite[Theorem 3.1]{slutsky2026invariant}.
\begin{thm}
    Assume there exists $\varepsilon>0$ and a finite set $K\subset \mathrm{supp}(\check\mu)$ such that
    $$\max_{g\in K}\|\alpha_g(a)-a\|_{\varphi}\geq\varepsilon$$
    for every $\varphi\in \partial_eS_{\mu}(A)$ and every $a\in A_{sa}$ such that $\varphi(a)=0$ and $\|a\|_{\varphi}=1$. Then $S_{\mu}(A)$ is a Bauer simplex.
    \label{thm:bauer.simplex}
\end{thm}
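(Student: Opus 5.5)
The plan is to argue by contradiction, as in the proof of Proposition~\ref{prop:bauer.simplex.sufficient.condition}, working directly with the automorphisms $\alpha_g$, $g\in K$, instead of the Markov iterates. Passing through Proposition~\ref{prop:bauer.simplex.sufficient.condition} itself is not convenient. The hypothesis controls each $\alpha_g$ only for $g\in K\subset\operatorname{supp}(\check\mu)$, and one cannot easily turn smallness of $\Phi^{(m)}_\mu(a)-a$ in $\|\cdot\|_\varphi$ into smallness of each $\alpha_g(a)-a$ without a quantitative strict-convexity argument. I will therefore rerun the argument instead.

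First step: suppose $S_\mu(A)$ is not Bauer. Then there are $\varphi_n\in\partial_eS_\mu(A)$ with $\varphi_n\to\varphi$ weak$^*$ and $\varphi\notin\partial_eS_\mu(A)$. By Theorem~\ref{thm:extreme-factor}, $M_\varphi$ is not a factor. Pick a non-scalar self-adjoint $z\in Z(M_\varphi)$ and normalize it so that $\omega_\varphi(z)=0$ and $\|z\|_{\omega_\varphi}=1$. Since $z$ is central and $U_g\in M_\varphi$, we have $U_gzU_g^*=z$ for all $g$.

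The key (and only genuinely new) step is the approximation of $z$. In Proposition~\ref{prop:bauer.simplex.sufficient.condition} we used that $T_\varphi$ is $\|\cdot\|_{\omega_\varphi}$-contractive. Here the individual $\widetilde\alpha_g$ need not be, because $\omega_\varphi$ is only stationary and not invariant. To get around this, I would use the self-adjoint Kaplansky density theorem to obtain a bounded net in $\pi_\varphi(A)_{sa}$ converging strongly to $z$. From this net I pick $a\in A_{sa}$ with
\[
\|(\pi_\varphi(a)-z)\xi_\varphi\|<\delta
\quad\text{and}\quad
\|(\pi_\varphi(a)-z)U_g^*\xi_\varphi\|<\delta\quad\text{for all } g\in K,
\]
which is possible because $K$ is finite. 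Then for $g\in K$,
\[
\|\alpha_g(a)-a\|_\varphi\le\|U_g(\pi_\varphi(a)-z)U_g^*\xi_\varphi\|+\|(z-\pi_\varphi(a))\xi_\varphi\|<2\delta .
\]
As before, we also have $|\varphi(a)|<\delta$ and $1-\delta<\|a\|_\varphi<1+\delta$.

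Final step: the quantities $\varphi_n(a)$, $\|a\|_{\varphi_n}^2=\varphi_n(a^2)$ and $\|\alpha_g(a)-a\|_{\varphi_n}^2$ for $g\in K$ are finitely many weak$^*$-continuous expressions. Hence for large $n$ they satisfy the same bounds with $\delta$ replaced by $2\delta$, and $3\delta$ for the last ones. Set $b_n=a-\varphi_n(a)1$. This leaves $\alpha_g(b_n)-b_n$ unchanged, and $\|b_n\|_{\varphi_n}^2=\|a\|_{\varphi_n}^2-\varphi_n(a)^2>1/2$ once $\delta<1/8$. Normalizing gives $c_n=b_n/\|b_n\|_{\varphi_n}$, which satisfies the hypothesis's normalization at $\varphi_n\in\partial_eS_\mu(A)$, but with $\max_{g\in K}\|\alpha_g(c_n)-c_n\|_{\varphi_n}<3\sqrt2\,\delta<\varepsilon$ when $\delta<\varepsilon/6$. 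This is a contradiction. The main obstacle is the non-contractivity of $\widetilde\alpha_g$ in $\|\cdot\|_{\omega_\varphi}$, which the simultaneous strong approximation on the finitely many vectors $U_g^*\xi_\varphi$ is designed to handle.
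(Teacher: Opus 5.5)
Your proof is correct, but it takes a different route from the paper. The paper does not rerun the contradiction argument. It reduces the theorem to Proposition~\ref{prop:bauer.simplex.sufficient.condition} with $F=\{1\}$, using the identity, valid for every stationary $\varphi$ and every $a\in A_{sa}$ with $\|a\|_\varphi=1$,
\[
\sum_{g\in\Gamma}\check\mu(g)\|\alpha_g(a)-a\|_\varphi^2=-2\operatorname{Re}\varphi\bigl(a(\Phi_\mu(a)-a)\bigr)\le 2\|\Phi_\mu(a)-a\|_\varphi .
\]
This needs only $\varphi(\Phi_\mu(a^2))=\varphi(a^2)$ and Cauchy--Schwarz. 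Together with Jensen it gives $\tfrac12\varepsilon^2\min_{g\in K}\check\mu(g)^2\le\|\Phi_\mu(a)-a\|_\varphi$. So you were too pessimistic in saying that turning smallness of $\Phi_\mu(a)-a$ into smallness of each $\alpha_g(a)-a$ needs a delicate strict-convexity argument. Stationarity makes it a one-line computation, and this is exactly where the paper uses $K\subset\operatorname{supp}(\check\mu)$.

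Your route bypasses the Markov operator entirely. Centrality of $z$ gives $U_gzU_g^*=z$ exactly. Approximating $z$ strongly at the finitely many vectors $\xi_\varphi$ and $U_g^*\xi_\varphi$, $g\in K$, then gets around the fact that $\widetilde\alpha_g$ is not $\|\cdot\|_{\omega_\varphi}$-contractive. All your estimates check out: the bound $\|\alpha_g(a)-a\|_\varphi<2\delta$, the weak$^*$ continuity step, and the renormalization with $\delta<\min\{\varepsilon/6,1/8\}$.

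What your route buys is a slightly stronger statement. You never use $K\subset\operatorname{supp}(\check\mu)$, so your argument proves the conclusion for any finite $K\subset\Gamma$. The paper's reduction could also be adapted to this case: choose $m_g$ with $\check\mu^{*m_g}(g)>0$ and take $F=\{m_g:g\in K\}$, since the same identity holds with $\Phi_\mu^{(m)}$ in place of $\Phi_\mu$. As written, though, it relies on the support hypothesis. What the paper's route buys is brevity once the proposition is available, plus the equivalence, noted in its closing remark, between the $\alpha_g$-condition and the $\Phi_\mu$-condition.
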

\begin{proof}
    First of all, we observe that 
      $$\|\Phi^{(m)}_{\mu}(a)-a\|_{\varphi}\leq \sum_{k=0}^{m-1}\|\Phi^{(k+1)}_{\mu}(a)-\Phi_{\mu}^{(k)}(a)\|_{\varphi}\leq m\|\Phi_{\mu}(a)-a\|_{\varphi}.$$
      Therefore, it is always enough to check the condition of Proposition \ref{prop:bauer.simplex.sufficient.condition} on $\|\Phi_{\mu}(a)-a\|_{\varphi}$ (equivalently, take the finite set $F$ in Proposition \ref{prop:bauer.simplex.sufficient.condition} to be \mbox{$F=\{1\}$)}. Moreover, 
    the condition in Proposition \ref{prop:bauer.simplex.sufficient.condition} is equivalent to the existence of $\varepsilon'>0$ such that for every $\varphi\in\partial_eS_{\mu}(A)$ and every $a\in A_{sa}$ satisfying $\|a\|_{\varphi}=1$ and $\varphi(a)=0$,
    $$\sum_{g\in\Gamma}\check\mu(g)||\alpha_g(a)-a\|^2_{\varphi}\geq\varepsilon'.$$
    Indeed, for every $\varphi\in S_{\mu}(A)$ and $a\in A_{sa}$ that satisfies the conditions,
    \begin{align*}
    \sum_{g\in\Gamma}\check\mu(g)\|\alpha_g(a)-a\|^2_{\varphi}=2\|a\|_{\varphi}^2-2\mathrm{Re}(\varphi(a\Phi_{\mu}(a)))\\=-2\mathrm{Re}(\varphi(a(\Phi_{\mu}(a)-a)))\\\leq 2\|\Phi_{\mu}(a)-a\|_{\varphi},
    \end{align*}
    where the last inequality follows from Cauchy--Schwarz and $\|a\|_{\varphi}=1$.
    Therefore, using Jensen's inequality and the triangle inequality for seminorms, we get the chain of inequalities
    \begin{equation}
    \frac{1}{2}\left(\sum_{g\in\Gamma}\check\mu(g)\|\alpha_g(a)-a\|_{\varphi}\right)^2\leq\frac{1}{2}\sum_{g\in\Gamma}\check\mu(g)\|\alpha_g(a)-a\|_{\varphi}^2\le \|\Phi_{\mu}(a)-a\|_{\varphi}.\label{eq:chain.of.inequalities}
    \end{equation}
    In particular, 
    $$\frac{1}{2}\varepsilon^2\min_{g\in K}\check\mu(g)^2\leq \frac{1}{2}\left(\sum_{g\in\Gamma}\check\mu(g)\|\alpha_g(a)-a\|_{\varphi}\right)^2\leq \|\Phi_{\mu}(a)-a\|_{\varphi}.$$
    The conclusion now follows.
\end{proof}
\begin{remark}
    In fact, equation \eqref{eq:chain.of.inequalities} combined with the inequality 
    $$\|\Phi_{\mu}(a)-a\|_{\varphi}\leq \sum_{g\in\Gamma}\check\mu(g)\|\alpha_g(a)-a\|_{\varphi}$$ shows that verifying the condition in Proposition \ref{prop:bauer.simplex.sufficient.condition} is equivalent to verifying the condition in Theorem \ref{thm:bauer.simplex}.
\end{remark}
The following example is a family of $(A_{d},\Gamma_{d},\mu_d)$ such that $S_{\mu_d}(A_d)$ is a Bauer simplex. Before introducing that,
we present some definitions. Let \mbox{$A$} be a unital \mbox{$C^*$}-algebra with a state \mbox{$\rho$}. A family of unital subalgebras \mbox{$(A_i)_{i\in I}$} is \emph{free with respect to $\rho$} if \mbox{$\rho(a_1\cdots a_n)=0$} whenever \mbox{$a_k\in A_{i_k}$}, \mbox{$\rho(a_k)=0$}, and consecutive indices differ \cite[Definition~1.3]{V}. For unital \mbox{$C^*$}-algebras \mbox{$A_i$} with states \mbox{$\rho_i$} having faithful GNS representations, we denote their reduced free product by \mbox{$(A,\rho)=*_{i\in I}(A_i,\rho_i)$}; its construction and canonical embeddings are recalled in \cite[Section~1, pp.~2--3]{RX}. The state-preserving conditional expectations onto its factors are given in \cite[Lemma~1.1]{BD}, with amalgamation over \mbox{$\mathbb C1$}. For von Neumann algebras equipped with faithful normal states, the corresponding free product is characterized by freely independent, state-preserving copies of the factors generating the ambient von Neumann algebra; see \cite[Section~1.3, Proposition~1.4]{V}.
\begin{example}
    Fix $d\geq 1$, let $S\subset\mathcal{O}_2$ be a finite symmetric generating set of unitaries, and let $\mathbb{F}_{\infty}$ denote the free group with infinite number of generators. Let $\nu\in\prob(S)$ be the uniform measure, and let $\psi\in S_{\nu}(\mathcal{O}_2)$. Moreover, let $\tau$ be the unique trace on $C^*_r(\mathbb{F}_{\infty})$, and define
    $$(C,\omega)=(\mathcal{O}_2,\psi)\ast (C^*_r(\mathbb{F}_{\infty}),\tau).$$
    Here $\ast$ denotes the free reduced product. In particular, $\omega|_{\mathcal{O}_2}=\psi$, $\omega
    |_{C^*_r(\mathbb{F}_\infty)}=\tau$, and $\mathcal{O}_2,\,C^*_r(\mathbb{F_{\infty}})$ are free with respect to $\omega$. Lastly, define 
    $$A_d=C(\mathbb{T}^d)\otimes_{\min}C.$$
    Let $\{r_j\}_{j=1}^{\infty}$ be the free unitaries, $\{z_j\}_{j=1}^d$ the coordinate unitaries in $C(\mathbb{T}^d)$, and let, up to suppressing the tensor embeddings,
    $$\Gamma_d=\langle S,z_1,\ldots,z_d,r_1,r_2,\ldots\rangle\leq U(A_d),\quad \mu_d=\frac{1}{3}\left(\nu+\frac{1}{2d}\sum_{i=1}^d (\delta_{z_i}+\delta_{z_i^{-1}})+\frac{1}{2}\sum_{j\geq 1}\frac{\delta_{r_j}+\delta_{r_j^{-1}}}{j(j+1)}\right),$$
    and assume $\Gamma_d$ acts on $A_d$ by conjugation.
    Then 
    $$\max_{j\in \{1,2\}}\|r_jar_j^{-1}-a\|_{\varphi}\geq\frac{1}{14}$$
    for every $\varphi\in \partial_eS_{\mu_d}(A_d)$ and every $a\in (A_d)_{\mathrm{sa}}$ satisfying $\|a\|_{\varphi}=1$ and $\varphi(a)=0$. By Theorem \ref{thm:bauer.simplex}, $S_{\mu_d}(A_d)$ is a Bauer simplex. We leave the technical details for the interested reader in Section \ref{sec:Appendix}.
    \label{ex:bauer.simplex.family}
\end{example}
\section{Appendix}\label{sec:Appendix}
In this section, we provide proofs for the technical parts of Example \ref{ex:bauer.simplex.family}. We will sometimes suppress the tensor embeddings, when no ambiguity arises. For $d\geq 1$, let $(A_d,\Gamma_d,\mu_d)$ be as in Example \ref{ex:bauer.simplex.family}. Observe that since $C(\mathbb{T}^d)\otimes 1$ is central in $A_d$,
    $$\Phi_{\mu_d}=\frac{1}{3}\left[I+\frac{1}{|S|}\sum_{s\in S}\mathrm{Ad}(s)+\frac{1}{2}
\sum_{\substack{j>\ell\\ \sigma\in\{\pm1\}}}
\frac{1}{j(j+1)}
\operatorname{Ad}(r_j^\sigma)
\right].$$
Let $$E_{\mathcal{O}_2}\colon C\to \mathcal{O}_2,\,E_{C^*_r(\mathbb{F}_{\infty})}\colon C\to C^*_r(\mathbb{F}_{\infty})$$
denote the conditional expectations. Since 
$$\omega=\psi\circ E_{\mathcal{O}_2}=\tau\circ E_{C^*_r(\mathbb{F}_{\infty})},$$
it is straight-forward to verify that $E\Phi_{\mu_d}=E$, where $E=\mathrm{id}\otimes\omega$, that $\omega\circ P_B=\omega$, where $P_B(x)=|S|^{-1}\sum_{s\in S}sxs^*$ for $x\in C$, and 
$$\omega(r_j^{\pm 1}x)=\omega(xr_j^{\pm 1}),\,j\in\mathbb{N},\,x\in C.$$
Now for $\ell\geq 1$, 
define $C_{\ell}=C^*(\mathcal{O}_2,r_1,\ldots,r_{\ell})$,
set
$t_{\ell}=\frac{1}{3(\ell+1)}$ and define the following operators on $A_d$
$$R_{\ell}=\sum_{\substack{j>\ell\\\sigma\in\{\pm 1\}}}
\frac{\ell+1}{2j(j+1)}
\operatorname{Ad}(r_j^\sigma),\quad Q_{\ell}=\frac{\Phi_{\mu_d}-t_{\ell}R_{\ell}}{1-t_{\ell}}.$$
Note that $Q_{\ell}$ is a contraction. In particular, $$\Phi_{\mu_d}=t_{\ell}R_{\ell}+(1-t_{\ell})Q_{\ell}$$
and
$Q_{\ell}(C_{\ell}\cap \ker(\omega))\subset C_{\ell}\cap \ker(\omega)$.
\begin{lem}
    For every $\ell\geq 1$ and every $x\in C_{\ell}\cap \ker(\omega)$, 
    $$\|R_{\ell}x\|\leq \underbrace{\frac{2}{\sqrt{2(\ell +1)}}}_{\eta_\ell}\|x\|.$$
\end{lem}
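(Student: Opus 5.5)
The plan is to obtain the operator-norm bound from freeness and a Haagerup-type estimate. First I would reduce to a statement about $C$ alone. Since $C(\mathbb{T}^d)\otimes 1$ is central in $A_d$, and since (reading $C_\ell$ and $\ker(\omega)$ inside $C$) $R_\ell$ only conjugates by $r_j^{\pm1}\in C$, it suffices to prove the bound on $C$; the $\min$-tensor norm passes it to $A_d$ through the conditional expectation $E=\mathrm{id}\otimes\omega$.

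\textbf{Step 1: the free setting.} Write $R_\ell x=\sum_{j>\ell,\sigma}c_j\,r_j^\sigma x r_j^{-\sigma}$ with $c_j=\frac{\ell+1}{2j(j+1)}$. The family $\{r_j\}_{j>\ell}$ consists of free Haar unitaries in $C^*_r(\mathbb{F}_\infty)$, and the $C^*$-algebras $C^*(r_j)$ for $j>\ell$ are free from $C_\ell$ with respect to $\omega$. Moreover, $\omega$ has faithful GNS representation on the reduced free product, so $\|\cdot\|$ is the operator norm on $L^2(C,\omega)$. For $x\in C_\ell\cap\ker\omega$, each $y_{j,\sigma}=r_j^\sigma x r_j^{-\sigma}$ is then a reduced alternating word with centered letters $r_j^{\pm\sigma}\in\ker\tau$ and $x\in\ker\omega|_{C_\ell}$.

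\textbf{Step 2: decompose each term.} Using the free product decomposition
\[
L^2(C,\omega)=\mathbb{C}\xi\oplus\bigoplus \mathcal{H}^\circ_{i_1}\otimes\cdots\otimes\mathcal{H}^\circ_{i_n},
\]
I would write $\lambda(y_{j,\sigma})$ as a sum of a bounded number of pieces in the style of Haagerup's inequality: the letter $r_j^\sigma$ either creates a new leftmost letter or is absorbed into an existing one, and similarly for the other letters. The pieces are grouped into a "creation" part $L_{j,\sigma}$, whose range lies in the closed span of reduced words beginning with a letter from $r_j^\sigma\mathbb{C}$, and a "non-creation" part $D_{j,\sigma}$, whose adjoint has this range property.

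The key orthogonality is that these ranges are pairwise orthogonal over all pairs $(j,\sigma)$. For $j\neq j'$ this is freeness. For the same $j$ with opposite signs it holds because $\langle r_j,r_j^{-1}\rangle_\tau=\tau(r_j^2)=0$. Orthogonality of ranges gives the row and column estimates
\[
\Bigl\|\sum c_jL_{j,\sigma}\Bigr\|\le\Bigl(\sum c_j^2\Bigr)^{1/2}\|x\|,
\qquad
\Bigl\|\sum c_jD_{j,\sigma}\Bigr\|\le\Bigl(\sum c_j^2\Bigr)^{1/2}\|x\|.
\]
Adding the two bounds gives
\[
\|R_\ell x\|\le 2\Bigl(\sum_{j>\ell,\sigma}c_j^2\Bigr)^{1/2}\|x\|.
\]

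\textbf{Step 3: arithmetic.} Using $\frac1{j(j+1)}\le\frac1{(\ell+1)(\ell+2)}$ for $j>\ell$ and the telescoping sum $\sum_{j>\ell}\frac1{j(j+1)}=\frac1{\ell+1}$,
\[
\sum_{j>\ell,\sigma}c_j^2=\frac{(\ell+1)^2}{2}\sum_{j>\ell}\frac1{j^2(j+1)^2}
\le\frac{(\ell+1)^2}{2}\cdot\frac{1}{(\ell+1)(\ell+2)}\cdot\frac1{\ell+1}\le\frac1{2(\ell+1)}.
\]
Hence $\|R_\ell x\|\le\frac{2}{\sqrt{2(\ell+1)}}\|x\|=\eta_\ell\|x\|$.

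The main obstacle is Step 2, namely making the decomposition into exactly two pieces with orthogonal ranges rigorous. The danger is that $x$ is an arbitrary centered element of $C_\ell$, not a single reduced word, so its action on $L^2$ mixes lengths. I would first try to treat $x$ as one letter from the "big" free factor $C_\ell$, because $C=C_\ell*C^*(r_j:j>\ell)$ with respect to $\omega$. One then only has to track the outer letters $r_j^{\pm\sigma}$, and the middle letter acts as an element of $\ker\omega$ of that factor, whose norm contributes $\|x\|$. If the two-piece splitting produces extra diagonal terms, I would absorb them using $|\tau(r_j^{k})|=0$ for $k\neq0$ and a third orthogonality argument, at worst with a slightly worse constant.
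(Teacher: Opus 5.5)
\textbf{Verdict: genuine gap.} Your architecture is the paper's, but Step~2, which carries the whole content of the lemma, is only asserted, as you note yourself. The paper also splits each $r_j^\sigma x r_j^{-\sigma}$ into a piece whose range lies in the words beginning with $\delta_{r_j^\sigma}$ and a piece whose adjoint has that property. It then uses orthogonality over $(j,\sigma)$ to get a row and a column estimate, adds them, and bounds $\sum c_j^2\le\frac{1}{2(\ell+2)}$. Your Step~3 arithmetic is correct.

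The Haagerup-style bookkeeping you sketch decides whether each of $r_j^{-\sigma}$, $x$, $r_j^{\sigma}$ creates, modifies or absorbs a leading letter. That produces several sub-terms. For instance, $r_j^{-\sigma}$ can absorb a leading $\delta_{r_j^\sigma}$, $x$ can then collapse the exposed $C_\ell$-letter to a scalar, and $r_j^\sigma$ then acts on whatever remains. Nothing in the proposal shows these regroup into exactly two families with the stated range properties. Your fallback does not rescue the statement either. It concedes the constant, and with your Step~3 estimate one extra piece already gives $3/\sqrt{2(\ell+2)}>\eta_\ell$ for every $\ell\ge1$.

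The missing idea is a one-line fact about the two-factor decomposition $C=C_\ell * C^*(r_j:j>\ell)$ that you mention at the end.
\begin{itemize}
\item Let $p$ be the projection of $H_\omega$ onto the closed span of reduced words whose first letter lies in $L^2(C_\ell,\omega)\ominus\mathbb{C}\xi_\omega$.
\item Then $(1-p)H_\omega$ consists of the vacuum together with the words whose first letter lies in $\ell^2(\mathbb{F}_{>\ell})\ominus\mathbb{C}\delta_e$, where $\mathbb{F}_{>\ell}$ is the free group on $\{r_j\}_{j>\ell}$.
\item A centered $x\in C_\ell$ acts on such a vector by creating the new leading letter $x\xi_\omega$. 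This letter is centered because $\omega(x)=0$, so the result lies in $pH_\omega$.
\end{itemize}
Hence $(1-p)x(1-p)=0$, which gives
\[
    x=px+(1-p)xp .
\]
Now put $L_{j,\sigma}=r_j^\sigma p x r_j^{-\sigma}$ and $D_{j,\sigma}=r_j^\sigma (1-p)xp\,r_j^{-\sigma}$.
\begin{itemize}
\item The range of $L_{j,\sigma}$ lies in $r_j^\sigma pH_\omega$. So does the range of $D_{j,\sigma}^*=r_j^\sigma p x^*(1-p)r_j^{-\sigma}$.
\item The subspaces $r_j^\sigma pH_\omega$ are pairwise orthogonal, because $\delta_{r_j^\sigma}\perp\delta_{r_{j'}^{\sigma'}}$ for $(j,\sigma)\neq(j',\sigma')$.
\item The bounds $\|L_{j,\sigma}\|,\|D_{j,\sigma}\|\le\|x\|$ are trivial.
\end{itemize}
The outer unitaries are never split, so no absorption or diagonal terms arise. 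The row and column estimates on finite partial sums then pass to the norm-convergent series. This is exactly the paper's proof.

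A side remark on Step~1: your detour through $E=\mathrm{id}\otimes\omega$ is unnecessary. Both $x$ and $R_\ell x$ lie in $1\otimes C$, which sits isometrically in $A_d$.
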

\begin{proof}
    Let $H_{\omega}$ be the GNS Hilbert space of $\omega$, and let $p\in B(H_{\omega})$ be the projection onto the closed subspace
    $$\overline{\mathrm{span}\{\pi_{\omega}(a)\xi_{\omega}:a\text{ is reduced and starts with a centered word in $\mathcal{O}_2$ or $r_j^{\pm 1},\,1\leq j\leq \ell$}\}}.$$
    Then $(1-p)x(1-p)=0$ for every $x\in C_{\ell}\cap\ker(\omega)$, and thus we can write every such $x$ as
    $$x=px+(1-p)xp.$$
    Fix $x\in C_{\ell}\cap\ker(\omega)$. For every $g=r_j^{\pm 1},\,j>\ell$,
    define 
    $$A_g=gpxg^{*},\,B_g=g(1-p)xpg^*.$$
    Then the $A_g$'s have orthogonal ranges. Similarly, $B_g^*$ have orthogonal ranges. Therefore, for every $n>\ell$,
    $$\left\|\sum_{n\geq j>\ell,\,\sigma\in\{\pm 1\}}\frac{\ell +1}{2j(j+1)}A_{r_j^{\sigma}}\right\|,\,\left\|\sum_{n\geq j>\ell,\,\sigma\in\{\pm 1\}}\frac{\ell +1}{2j(j+1)}B_{r_j^{\sigma}}\right\|\leq \|x\|\left(\sum_{n\geq j>\ell,\,\sigma\in\{\pm 1\}}\left(\frac{\ell +1}{2j(j+1)}\right)^2\right)^{\frac{1}{2}},$$
    where the inequality follows from the Pythagorean theorem. Therefore, passing to limits,
    $$\|R_{\ell}x\|\leq 2\|x\|\left(\sum_{ j>\ell,\,\sigma\in\{\pm 1\}}\frac{\ell +1}{2j(j+1)}\right)^{\frac{1}{2}}.$$
    Since
    $$\sum_{j>\ell}\frac{\ell+1}{2j(j+1)}=\frac{1}{2},\quad \max_{j>\ell}\frac{\ell+1}{2j(j+1)}=\frac{1}{2(\ell +2)},$$
    we have
    $$\|R_{\ell}x\|\leq \frac{2}{\sqrt{2(\ell +1)}}\|x\|.$$

\end{proof}
\begin{lem}
    Let $\varphi\in S_{\mu_d}(A_d)$ and identify $C$ with $1\otimes C$. Then $\varphi|_C=\omega$.
    \label{lem:restriction.of.stationary.equals.omega}
\end{lem}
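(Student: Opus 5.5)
The strategy is to show that $\rho:=\varphi|_C$ is a state on $C$ satisfying a stationarity relation with a spectral gap on $\ker(\omega)$, which forces $\rho=\omega$. Because $C(\mathbb{T}^d)\otimes 1$ is central in $A_d$, conjugation by the $z_i$ acts trivially. The formula for $\Phi_{\mu_d}$ displayed above then shows that $\Phi_{\mu_d}$ maps $1\otimes C$ into itself and restricts there to the unital completely positive map
\[
\Phi=\tfrac13\Bigl[I+P_B+\tfrac12\sum_{j\geq1,\sigma=\pm1}\tfrac{1}{j(j+1)}\operatorname{Ad}(r_j^\sigma)\Bigr].
\]
So $\rho$ is a state on $C$ with $\rho\circ\Phi=\rho$, and also $\rho\circ\Phi^{(n)}=\rho$ for all $n$. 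It suffices to prove $\rho(x)=\omega(x)$ for $x$ in the dense subalgebra $\bigcup_\ell C_\ell$. Writing $x=\omega(x)1+x_0$ with $x_0\in\ker\omega$, it is enough to show $\rho(x_0)=0$ for every $x_0\in C_\ell\cap\ker(\omega)$.

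Fix $\ell$ and $x_0\in C_\ell\cap\ker(\omega)$. Use the splitting $\Phi=t_\ell R_\ell+(1-t_\ell)Q_\ell$. By the remark preceding the previous lemma, $Q_\ell$ preserves $C_\ell\cap\ker(\omega)$. The previous lemma gives $\|R_\ell y\|\leq \eta_\ell\|y\|$ for $y\in C_\ell\cap\ker(\omega)$. The key point is that $\rho$ sees $R_\ell y$ only through its size, and $R_\ell y$ is again $\omega$-centered, since $\omega$ is invariant under $\operatorname{Ad}(r_j^{\pm1})$ because $\omega(r_jx)=\omega(xr_j)$. However, $R_\ell y$ leaves $C_\ell$, so the recursion cannot be iterated naively. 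Instead, the plan is to estimate $|\rho(y)|$ for $y$ in the centered part.

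Let $s_\ell=\sup\{|\rho(y)|:y\in C_\ell\cap\ker\omega,\ \|y\|\le1\}$. Then $\rho(y)=\rho(\Phi y)=t_\ell\rho(R_\ell y)+(1-t_\ell)\rho(Q_\ell y)$, which gives
\[
|\rho(y)|\le t_\ell\,\eta_\ell\,\|y\|+(1-t_\ell)\,s_\ell\,\|y\|,
\]
using $\|Q_\ell y\|\le\|y\|$. Taking the supremum yields $s_\ell\le t_\ell\eta_\ell+(1-t_\ell)s_\ell$, that is, $s_\ell\le\eta_\ell=\sqrt{2/(\ell+1)}$.

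This bound alone does not make $\rho(x_0)$ vanish, but $x_0\in C_\ell\subset C_{\ell'}$ for every $\ell'\geq\ell$. Hence $|\rho(x_0)|\le s_{\ell'}\|x_0\|\le\eta_{\ell'}\|x_0\|\to0$ as $\ell'\to\infty$. Therefore $\rho(x_0)=0$, and continuity gives $\rho=\omega$ on $C$.

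The main obstacle I expect is verifying that $Q_\ell$ genuinely maps $C_\ell\cap\ker\omega$ into itself. This requires checking that the terms of $\Phi$ with $j\le\ell$, together with $P_B$ and the identity, preserve both $C_\ell$ and $\ker\omega$. Preservation of $\ker\omega$ uses $\omega\circ P_B=\omega$ and the traciality of $\omega$ in the $r_j$. I also need the arithmetic that the coefficients of $\Phi-t_\ell R_\ell$ are nonnegative, so that $Q_\ell$ is a contraction. The paper asserts all of this just before the lemma, so I would cite it and only double-check that $t_\ell R_\ell$ is exactly the tail $j>\ell$ of $\Phi$, as the normalization $\sum_{j>\ell}\frac{\ell+1}{2j(j+1)}=\frac12$ suggests.
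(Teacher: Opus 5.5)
Your proposal is correct and is essentially the paper's proof. Your $s_\ell$ is the paper's $b_\ell$, and you use the same splitting $\Phi_{\mu_d}=t_\ell R_\ell+(1-t_\ell)Q_\ell$, the bound $\|R_\ell y\|\le\eta_\ell\|y\|$, and the invariance $Q_\ell(C_\ell\cap\ker\omega)\subset C_\ell\cap\ker\omega$ to get $s_\ell\le\eta_\ell$, then let $\ell\to\infty$ and conclude by density of $\bigcup_\ell C_\ell$. You are also right to read the sum in the displayed formula for $\Phi_{\mu_d}$ as running over all $j\geq 1$ (the $j>\ell$ there is a typo), and right that $t_\ell R_\ell$ is exactly the $j>\ell$ tail.
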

\begin{proof}
    For $\ell\geq 1$, let 
    $$b_{\ell}=\sup\{|\varphi(x)|:x\in C_{\ell}\cap\ker(\omega),\,\|x\|\leq 1\}.$$
    For $x\in C_{\ell}\cap\ker(\omega)$ with $\|x\|\leq 1$, by stationarity,
    $$|\varphi(x)|=|\varphi(\Phi_{\mu_d}(x))|\leq (1-t_{\ell})|\varphi(Q_{\ell}(x))|+t_{\ell}|\varphi(R_{\ell}(x))|\leq (1-t_{\ell})b_{\ell}+t_{\ell}\eta_\ell,$$
    and in particular $b_{\ell}\leq\eta_{\ell}$. Now for fixed $m$ and $x\in C_{m}\cap \ker(\omega)$, applying this for every $\ell\geq m$ gives us $\varphi(x)=0$. Since $\displaystyle\overline{\bigcup_{m=1}^{\infty}C_m}=C$, we see that $\omega(x)=0$ implies $\varphi(x)=0$. In particular, for every $x\in C$,
    $$0=\varphi(x-\omega(x)1)=\varphi(x)-\omega(x),$$
    as desired.
\end{proof}
\begin{lem}
    Let $\varphi\in\partial_eS_{\mu}(A)$. Denote $N_1=W^*(\pi_{\varphi}(r_1)),\,N_2=W^*(\pi_{\varphi}(\mathcal{O}_2), \pi_{\varphi}(r_2),\pi_{\varphi}(r_3),\ldots)$. Then
    $$(M_{\varphi},\omega_\varphi)\cong (N_1,\omega_{\varphi}|_{N_1})\ast (N_2,\omega_{\varphi}|_{N_2}),$$
    with $\ast$ denoting the reduced free product. 
\end{lem}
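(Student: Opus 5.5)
The plan is to verify the three ingredients of the characterization of von Neumann algebraic free products recalled before Example~\ref{ex:bauer.simplex.family} (\cite[Section~1.3, Proposition~1.4]{V}). These are: (i) $\omega_\varphi$ is a faithful normal state on $M_\varphi$; (ii) $N_1$ and $N_2$ are free with respect to $\omega_\varphi$; (iii) $N_1$ and $N_2$ together generate $M_\varphi$. Ingredient (i) is immediate from Lemma~\ref{lem:separating}. Its restrictions to $N_1$ and $N_2$ are then faithful normal states as well, so the reduced free product on the right-hand side makes sense.

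For (iii), I would use extremality. Since $C(\mathbb{T}^d)\otimes 1$ is central in $A_d$, its image $\pi_\varphi(C(\mathbb{T}^d)\otimes 1)$ commutes with $\pi_\varphi(A_d)$, and hence lies in $Z(M_\varphi)$. By Theorem~\ref{thm:extreme-factor}, $M_\varphi$ is a factor, so each $\pi_\varphi(z_i)$ is a scalar. Since $A_d$ is generated by $C(\mathbb{T}^d)\otimes 1$ and $1\otimes C$, it follows that $M_\varphi=\pi_\varphi(1\otimes C)''$. Moreover, $C$ is generated as a $C^*$-algebra by $\mathcal{O}_2$ and the $r_j$'s, so $\pi_\varphi(1\otimes C)''$ is exactly the von Neumann algebra generated by $N_1$ and $N_2$.

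For (ii), I would first work at the $C^*$-level. By Lemma~\ref{lem:restriction.of.stationary.equals.omega}, $\varphi|_{1\otimes C}=\omega$. Thus it suffices to show that $D_1=C^*(r_1)$ and $D_2=C^*(\mathcal{O}_2,r_2,r_3,\ldots)$ are free in $(C,\omega)$. In $(C^*_r(\mathbb{F}_\infty),\tau)$, the algebras $C^*(r_1)$ and $C^*(r_2,r_3,\ldots)$ are free, because the free generators give free Haar unitaries. Together with the freeness of $\mathcal{O}_2$ from $C^*_r(\mathbb{F}_\infty)$ under $\omega$, associativity of free independence shows that $C^*(r_1)$, $\mathcal{O}_2$ and $C^*(r_2,r_3,\ldots)$ are a free family. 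Grouping the last two then gives that $D_1$ and $D_2$ are free. Concretely, I would check $\omega(a_1\cdots a_n)=0$ for alternating centered words by expanding each centered element of $D_2$ into alternating centered pieces from $\mathcal{O}_2$ and from $C^*(r_2,\ldots)$.

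Next, transport this freeness to $M_\varphi$. Since $\omega_\varphi\circ\pi_\varphi=\varphi$ agrees with $\omega$ on $1\otimes C$, the subalgebras $\pi_\varphi(D_1)$ and $\pi_\varphi(D_2)$ are free with respect to $\omega_\varphi$. To pass to their weak closures $N_1$ and $N_2$, I use Kaplansky density: each centered element of $N_i$ is a strong$^*$ limit of bounded nets of centered elements of $\pi_\varphi(D_i)$ (centering is continuous by normality of $\omega_\varphi$). Joint strong continuity of multiplication on bounded sets then shows that the alternating moments vanish in the limit.

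I expect the main obstacle to be this approximation step, together with the associativity argument for freeness. Both are standard, but they need care with bounded nets and centering.
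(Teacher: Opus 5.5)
Your proposal is correct and follows the paper's proof. Faithfulness comes from Lemma~\ref{lem:separating}; freeness of $C^*(r_1)$ and $C^*(\mathcal{O}_2,r_2,r_3,\ldots)$ comes from Lemma~\ref{lem:restriction.of.stationary.equals.omega} and is passed to $N_1,N_2$ by Kaplansky density; generation follows because factoriality forces $\pi_\varphi(C(\mathbb{T}^d)\otimes 1)$ to be scalar; and \cite[Proposition~1.4]{V} finishes the argument. You simply make explicit the associativity-of-freeness and bounded-net approximation steps that the paper compresses into single sentences.
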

\begin{proof}
    By Lemma \ref{lem:faithful.normal.state.on.vN}, $\omega_{\varphi}$ is faithful. Moreover, by Lemma \ref{lem:restriction.of.stationary.equals.omega}, generating $C^*$-subalgebras for $N_1$ and $N_2$ are free with respect to $\varphi$. By Kaplansky's density theorem, $N_1$ and $N_2$ are free with respect to $\varphi$. Moreover, they generate $M_{\varphi}$ since $M_{\varphi}$ is a factor and the image of $C(\mathbb{T}^d)$ under $\pi_{\varphi}$ is central (and thus scalar). 
    By \cite[Proposition 1.4]{V}, we get the desired isomorphism.
\end{proof}
\begin{lem}[{\cite[Lemma~4.1]{V}}]
Let $N_i$ be von Neumann algebras equipped with faithful normal
states $\omega_i$, for $i=1,2$, and put
\[
(N,\omega)=(N_1,\omega_1)*(N_2,\omega_2).
\]
Let $a\in N_1$ and $b,c\in N_2$ belong to the domain of
$\sigma^\omega_{i/2}$, where $(\sigma^\omega_t)_{t\in\mathbb R}$
is the modular automorphism group of $\omega$.
For $i=1,2$, let $\alpha_i$ be an automorphism of $N_i$
satisfying $\omega_i\circ\alpha_i=\omega_i$, and write
$\alpha=\alpha_1*\alpha_2$.
Then, for every $x\in N$,
\[
\|x-\omega(x)1\|_{\omega}
\le E(a,b,c)\max_{v\in\{a,b,c\}}\|xv-\alpha(v)x\|_{\omega}
   +F(a,b,c)\|x\|_{\omega},
\]
where $\|x\|_{\omega}=\omega(x^*x)^{1/2}$ and
\[
\begin{aligned}
E(a,b,c)
&=6\|a\|^3+4\|b\|^3+4\|c\|^3,\\
F(a,b,c)
&=3C(a)+2C(b)+2C(c)
  +12|\omega(cb^*)|\,\|cb^*\|,\\
C(v)
&=2\|v\|^3\|\sigma^\omega_{i/2}(v)-v\|
  +2\|v\|^2\|v^*v-1\|\\
&\quad+3(1+\|v\|^2)\|vv^*-1\|
  +6|\omega(v)|\,\|v\|.
\end{aligned}
\]
\end{lem}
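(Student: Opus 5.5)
The plan is to first prove the inequality in an idealized situation, where all the error terms $C(\cdot)$ and $|\omega(cb^*)|$ vanish, and then perturb. In the ideal case $a$, $b$, $c$ are unitaries lying in the centralizer of $\omega$ (so $\sigma^\omega_{i/2}(v)=v$), they are centered, and $\omega(cb^*)=0$. The main tool is the standard free product decomposition of $L^2(N,\omega)$. Writing $H_i^\circ=L^2(N_i,\omega_i)\ominus\mathbb C\xi$, we have
\[
L^2(N,\omega)=\mathbb C\xi\oplus\bigoplus_{n\ge1}\ \bigoplus_{i_1\ne\cdots\ne i_n}H_{i_1}^\circ\otimes\cdots\otimes H_{i_n}^\circ .
\]
Let $P_i$ be the projection onto the words whose first letter lies in $H_i^\circ$, and $Q_i$ the projection onto the words whose last letter lies in $H_i^\circ$.

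The key observation concerns a centered unitary $v\in N_1$ in the centralizer. Left multiplication by $v$ maps $(1-P_1)L^2$ isometrically into $P_1L^2$, since it prepends a centered letter. Right multiplication by $v^*$, implemented by $Jv^TJ$, is isometric because $v$ is in the centralizer, and it maps $(1-Q_1)L^2$ into $Q_1L^2$. Put $\delta=\max_v\|xv-\alpha(v)x\|_\omega$, and assume $\omega(x)=0$. Since $\alpha$ preserves the state and the free product structure, $\alpha(a)$ is again a centered unitary in $N_1$. From $x\xi\approx\alpha(a)xa^*\xi$ up to $\delta$, the component of $x\xi$ outside $P_1\cup Q_1$ is pushed into $P_1\cap Q_1$, and this forces $\|(1-P_1)Q_1^\perp x\xi\|\lesssim\delta$. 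The same argument with $b$, or with $c$, gives smallness of the part outside $P_2$ and $Q_2$.

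To control the part of $x\xi$ sitting in words that start and end in the same algebra, I will use the pair $b,c$. The conditions $\omega(cb^*)=0$ and centeredness mean that $\alpha(b)$ and $\alpha(c)$ prepend mutually orthogonal letters. Consequently, the sets $\alpha(b)(1-P_2)L^2$ and $\alpha(c)(1-P_2)L^2$ are orthogonal, so $x\xi$ cannot be close to both unless its $(1-P_2)$ part is small. This step is what produces the constants $4\|b\|^3+4\|c\|^3$. Combining the three estimates with the triangle inequality, and using $\|\cdot\|^3$ to bound the norms of products such as $\alpha(v)xv^*$ expanded around $x$, gives
\[
\|x\|_\omega\le E(a,b,c)\,\delta
\]
in the ideal case.

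For the general case, I will rerun the ideal argument with each idealization replaced by an estimate, collecting each defect linearly.
\begin{enumerate}
\item Right multiplication by $v$ has norm governed by $\sigma^\omega_{i/2}(v)$, since $x v\xi=J\sigma_{i/2}(v)^*Jx\xi$. This contributes the term $\|v\|^3\|\sigma^\omega_{i/2}(v)-v\|$.
\item Non-unitarity is controlled by $v^*v-1$ and $vv^*-1$, which gives the second and third terms of $C(v)$.
\item Noncentering is handled by replacing $v$ with $v-\omega(v)$, at cost $|\omega(v)|\|v\|$.
\item Non-orthogonality of $b$ and $c$ contributes $|\omega(cb^*)|\|cb^*\|$.
\end{enumerate}
All of these defects multiply $\|x\|_\omega$, which produces the $F(a,b,c)\|x\|_\omega$ term. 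Finally, I apply the resulting bound to $x-\omega(x)1$. Commutators with scalars vanish in the $\alpha$-twisted sense only up to $\|\alpha(v)-v\|$, so I will instead apply the argument directly to $x$ while tracking $\omega(x)$ through the centering correction.

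I expect the main obstacle to be the bookkeeping in the non-tracial setting. Right multiplication is not isometric in general, and the orthogonality relations between word spaces hold only after conjugating by the modular operator. The most delicate point is getting the $\sigma_{i/2}$ correction into the right place. I would try first to work throughout with the right action $x\mapsto Jv^*Jx$, and compare it to right multiplication only at the final stage.
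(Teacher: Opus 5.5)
The paper does not prove this lemma. It quotes it from \cite[Lemma~4.1]{V}, a twisted, non-tracial refinement of Barnett's $14\varepsilon$ inequality. The paper applies it only with $\alpha=\mathrm{id}$ and with centered centralizer unitaries $a,b,c=b^*$, which is exactly your ideal case.

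Judged on its own, your sketch has the right framework: the free-product Fock space, left multiplication by a centered unitary prepending a letter, and orthogonality coming from $b,c$. \textbf{But the step that is supposed to produce smallness is a non sequitur.} Put $U_a=\alpha(a)JaJ$, so $U_ax\xi=\alpha(a)xa^*\xi$ and $\|x\xi-U_ax\xi\|\le\delta$. The fact that $U_a$ carries $(2,2)$-words into $(1,1)$-words only yields a comparison of masses,
$\|(1-P_1)(1-Q_1)x\xi\|\le\|P_1Q_1x\xi\|+\delta$.
It does not force the $(2,2)$-part to be $O(\delta)$. Already in $L(\mathbb F_2)$ with $a=u_a$ alone, take the normalized element $x\propto\sum_{k\in\mathbb Z}e^{-|k|/n}u_{a^kba^{-k}}$. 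It has $\|x-u_axu_a^*\|_2\approx 1/n$, while its $(2,2)$-part has norm $\approx n^{-1/2}$.

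Your claim that $x\xi$ ``cannot be close to both'' orthogonal subspaces has the same defect. $x\xi$ is not close to them. Only its projections onto them are bounded below by the norm of its $(1-P_2)$-part minus $\delta$. The four regions $P_iQ_j$ also cannot be treated separately, because $U_a$ sends $(1,1)$-words to words of every type (a letter can be absorbed at either end). So there are no three smallness estimates to combine with the triangle inequality.

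The missing idea is the Murray--von Neumann amplification argument, which needs only first letters. Set $x^\circ=x-\omega(x)1$, $s=\|P_1x^\circ\xi\|$ and $t=\|P_2x^\circ\xi\|$.
\begin{enumerate}
\item Right multiplication by $N_1$ preserves $P_2H$. So $U_a$ maps $P_2H$ unitarily onto $K_a=\alpha(a)\xi\otimes P_2H\subset P_1H$, hence $P_{K_a}U_a=U_aP_2$.
\item Both $\xi$ and $U_a\xi$ lie in $\mathbb C\xi\oplus H_1^\circ$, which is orthogonal to $K_a$. This is also how the scalar part of $x$ drops out, which settles your last concern. It gives $s\ge\|P_{K_a}x\xi\|\ge t-\delta$.
\item Symmetrically, $U_b$ and $U_c$ map $P_1H$ onto $\alpha(b)\xi\otimes P_1H$ and $\alpha(c)\xi\otimes P_1H$. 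These are orthogonal subspaces of $P_2H$, so $t^2\ge 2(s-\delta)_+^2$.
\item The factor $\sqrt2>1$ closes the loop: $s\le(3+2\sqrt2)\delta$ and $t\le s+\delta$, so $\|x^\circ\|_\omega\le s+t\le(7+4\sqrt2)\delta\le14\delta$.
\end{enumerate}
This corrected ideal-case argument is all the paper's application needs.

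Two further points. First, orthogonality of the prepended letters means $\omega(\alpha(c)^*\alpha(b))=\omega(c^*b)=0$, not $\omega(cb^*)=0$. The latter governs the appended letters $b^*\xi,c^*\xi$, i.e.\ a last-letter version of the argument. The two quantities have equal modulus for centralizer elements but not in general, so your error terms would not reproduce $F$.

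Second, your general case is only a promissory note. Without unitarity there is no $U_v$, the identity $P_{U_v(P_2H)}U_v=U_vP_2$ fails, and left multiplication by $\alpha(v)$ no longer keeps the images of $P_1H$ and $P_2H$ orthogonal. Nothing in the proposal shows that the resulting bookkeeping produces exactly the stated $E$ and $F$.
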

\begin{lem}
For every $\varphi\in \partial_eS_{\mu_d}(A_d)$ and every $x\in (A_d)_{\mathrm{sa}}$ satisfying $\|a\|_{\varphi}=1$ and $\varphi(x)=0$,
    $$\max_{j\in \{1,2\}}\|r_jxr_j^{-1}-x\|_{\varphi}\geq\frac{1}{14}.$$
\end{lem}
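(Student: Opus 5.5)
The plan is to pass to the GNS von Neumann algebra and apply the cited Lemma of Voiculescu type (\cite[Lemma~4.1]{V}) to the free product decomposition $(M_\varphi,\omega_\varphi)\cong(N_1,\omega_\varphi|_{N_1})\ast(N_2,\omega_\varphi|_{N_2})$ from the preceding lemma, taking $X=\pi_\varphi(x)$, so that $\|X\|_{\omega_\varphi}=\|x\|_\varphi=1$ and $\omega_\varphi(X)=\varphi(x)=0$.

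\textbf{Choice of data.} The automorphisms are inner. I take $\alpha_1=\mathrm{Ad}(\pi_\varphi(r_1))$ on $N_1$; this is trivial since $N_1$ is abelian. I take $\alpha_2=\mathrm{Ad}(\pi_\varphi(r_2))$ on $N_2$, which preserves $\omega_\varphi|_{N_2}$ because, by Lemma~\ref{lem:restriction.of.stationary.equals.omega}, $\varphi|_C=\omega$ and $\omega$ is $r_j$-central. One could instead take both $\alpha_i$ trivial, and I would do so, so that the commutator terms $\|Xv-vX\|$ appear.

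For $a$ I take $\pi_\varphi(r_1)$, and for $b,c$ I take $\pi_\varphi(r_2)$ and $\pi_\varphi(r_2)^2$ (or $\pi_\varphi(r_2^{-1})$). These are unitaries, so $\|v\|=1$ and $v^*v=vv^*=1$. They are Haar-centered: $\omega(r_2^k)=\tau(r_2^k)=0$ for $k\neq 0$, and similarly $\omega(cb^*)=\tau(r_2)=0$. They lie in the centralizer of $\omega_\varphi$, because $\omega(r_jy)=\omega(yr_j)$ on $C$ and $C(\mathbb T^d)$ acts by scalars since $M_\varphi$ is a factor. Hence $\sigma^{\omega}_t(v)=v$, $v$ is analytic, and $\sigma_{i/2}(v)=v$.

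Under these choices every term of $C(v)$ vanishes, so $F=0$ and $E=6+4+4=14$. The lemma then gives
\[1=\|X-\omega(X)1\|_\omega\le 14\max_{v\in\{a,b,c\}}\|Xv-vX\|_\omega .\]

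\textbf{Reducing to the two conjugations.} Since $v$ is a centralizer unitary, right multiplication by $v^*$ is isometric in $\|\cdot\|_\omega$, because $\omega(v y^*y v^*)=\omega(y^*y)$. Therefore
\[\|Xv-vX\|_\omega=\|X-vXv^*\|_\omega=\|r x r^{-1}-x\|_\varphi .\]
This handles $r_1$ and $r_2$ directly. For the third element I would choose $c=\pi_\varphi(r_2^{-1})$, where $\|r_2^{-1}xr_2-x\|_\varphi=\|x-r_2xr_2^{-1}\|_\varphi$, again using the centralizer property. The maximum is then over $j\in\{1,2\}$, which yields the bound $\tfrac1{14}$.

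\textbf{Main obstacle.} The delicate step is the term $12|\omega(cb^*)|$. With $c=r_2^{-1}$ and $b=r_2$ we get $cb^*=r_2^{-2}$, and $\tau(r_2^{-2})=0$, so this term vanishes as well. The real care is needed in verifying that the $r_j$ lie in the centralizer of $\omega_\varphi$ on all of $M_\varphi$, which rests on the factor property and normality. I would also check that the hypotheses of \cite[Lemma~4.1]{V} permit $a$ and $b$ to sit in different free factors as stated.
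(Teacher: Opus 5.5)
Your proposal is correct and matches the paper's proof: trivial $\alpha_i$, $a=\pi_\varphi(r_1)$, $b=\pi_\varphi(r_2)$, $c=b^*$. These give $E(a,b,c)=14$ and $F(a,b,c)=0$, because the unitaries are centered and lie in the centralizer of $\omega_\varphi$; then right multiplication by a centralizer unitary is $\|\cdot\|_{\omega_\varphi}$-isometric, which turns $\|Xv-vX\|_{\omega_\varphi}$ into $\|vXv^*-X\|_{\omega_\varphi}$. You also spell out two points the paper leaves implicit: why factoriality (making $\pi_\varphi(C(\mathbb{T}^d))$ scalar) extends the centralizer property to all of $M_\varphi$, and why the $c=r_2^{-1}$ term reduces to the $r_2$-conjugation. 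Commit to $c=r_2^{-1}$ rather than $r_2^2$, since the latter only yields $\tfrac{1}{28}$.
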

\begin{proof}
      Fix $\varphi\in\partial_eS_{\mu_d}(A_d)$ and let $\alpha_i=Id_{N_i}$. This implies $\alpha=Id_{M_{\varphi}}$.
    Take $a=\pi_{\varphi}(r_1)$, \mbox{$b=\pi_{\varphi}(r_2),\,c=b^{*}$}. Since these are unitaries, $E(a,b,c)=14$. 
    Moreover, since 
    \mbox{$\omega(r_1y)=\omega(yr_1)$} for every $y\in C$, we have $\omega_{\varphi}(a\pi_{\varphi}(y))=\omega_{\varphi}(\pi_{\varphi}(y)a)$ for every $y\in C$. By normality, $\omega_{\varphi}(ax)=\omega_{\varphi}(xa)$ for every $x\in M_{\varphi}$, and in particular the modular automorphism group of $\omega_{\varphi}$ fixes $a$. By analytic continuation, $\sigma^{\omega_{\varphi}}_{\frac{i}{2}}(a)=a$. 
    Moreover, 
    $$\omega_{\varphi}(a)=\omega(r_1)=\tau(r_1)=0.$$
    This gives us $C(a)=0$, and similarly for $b$ and $c$. Lastly, $\omega_{\varphi}(cb^*)=\tau(r_2^{-2})=0$, and thus $F(a,b,c)=0$. We conclude
     $$\|x-\omega_{\varphi}(x)1\|_{\omega_{\varphi}}\leq 14\max\{\|xa-x\|_{\omega_{\varphi}},\,\|xb-x\|_{\omega_{\varphi}},\,\|xc-x\|_{\omega_{\varphi}}\}.$$
    Since right multiplication by elements in $\{a,b,c\}$ preserves the $\omega_{\varphi}$ norm (by what we showed),
    $$\|x-\omega_{\varphi}(x)1\|_{\omega_{\varphi}}\leq 14\max\{\|axa^*-x\|_{\omega_{\varphi}},\|bxb^*-x\|_{\omega_{\varphi}}\}.$$
    Replacing $x$ with $\pi_{\varphi}(y)$ for $y$ satisfying $\|y\|_{\varphi}=0$ and $\varphi(y)=1$, we see
    $$\max_{j=1,2}\|r_jyr_j^{-1}-y\|_{\varphi}\geq\frac{1}{14}.$$
\end{proof}
\subsection*{Acknowledgments.}
I thank Eli Shamovich and Matthew Kennedy for their interesting talks, and Eli Shamovich for his helpful comments on earlier drafts of this paper.

During the preparation of this manuscript, the author used generative AI tools to assist with drafting portions of the Introduction and Preliminaries, improving the presentation and typesetting of the text and equations, proofreading for typographical errors and inconsistencies in notation, organizing the bibliography, and locating relevant references. Moreover, it provided great help in finding Example \ref{ex:bauer.simplex.family}. These tools were not used to produce the other mathematical results, arguments, or proofs, which are entirely the author’s own. All cited sources were independently verified by the author, who assumes full responsibility for the final content of the manuscript.

\bibliographystyle{alpha}
\bibliography{my_bib}
\end{document}